\patchcmd{\subsection}{-.5em}{.5em}{}{}
\newcommand{\mysubsubsection}[1]{\subsubsection*{\bfseries #1}}
\renewcommand{\tocsection}[3]{
  \indentlabel{\@ifnotempty{#2}{\ignorespaces#1 #2\quad}}\bfseries#3}
\renewcommand{\tocsubsection}[3]{
  \indentlabel{\@ifnotempty{#2}{\ignorespaces#1 #2\quad}}#3}
\newcommand\@dotsep{4.5}
\def\@tocline#1#2#3#4#5#6#7{\relax
  \ifnum #1>\c@tocdepth
  \else
    \par \addpenalty\@secpenalty\addvspace{#2}
    \begingroup \hyphenpenalty\@M
    \@ifempty{#4}{
      \@tempdima\csname r@tocindent\number#1\endcsname\relax
    }{
      \@tempdima#4\relax
    }
    \parindent\z@ \leftskip#3\relax \advance\leftskip\@tempdima\relax
    \rightskip\@pnumwidth plus1em \parfillskip-\@pnumwidth
    #5\leavevmode\hskip-\@tempdima{#6}\nobreak
    \leaders\hbox{$\m@th\mkern \@dotsep mu\hbox{.}\mkern \@dotsep mu$}\hfill
    \nobreak
    \hbox to\@pnumwidth{\@tocpagenum{\ifnum#1=1\bfseries\fi#7}}\par
    \nobreak
    \endgroup
  \fi}
\renewcommand\csname r@tocindent0\endcsname{0pt}
\def\l@subsection{\@tocline{2}{0pt}{2.5pc}{5pc}{}}
\newcounter{results}[section]
\theoremstyle{plain}
\newtheorem{theorem}[results]{Theorem}
\newtheorem{lemma}[results]{Lemma}
\newtheorem{proposition}[results]{Proposition}
\newtheorem{corollary}[results]{Corollary}
\theoremstyle{remark}
\newtheorem{remark}[results]{Remark}
\newtheorem{example}[results]{Example}
\theoremstyle{definition}
\newtheorem{definition}[results]{Definition}
\numberwithin{equation}{section}
\newcommand{\R}{\ensuremath{\mathbb R}} % Real numbers
\newcommand{\N}{\ensuremath{\mathbb N}} % Natural numbers
\newcommand{\eps}{\ensuremath{\varepsilon}} % Epsilon
\newcommand{\restr}[1]{\lower3pt\hbox{$|_{#1}$}} %restrizione funzione
\newcommand{\dom}{\ensuremath{\mathrm{D}}}% Domain
\renewcommand{\P}{\mathbb P} % reference probability measure on Omega
\newcommand{\E}{\mathbb E} % mean
\newcommand{\upds}{{\frac{\d}{\d s}}^{\kern-3pt +}} % Upper right Dini derivative with respect to s
\newcommand{\updt}{{\frac{\d}{\d t}}^{\kern-3pt +}} % Upper right Dini derivative with respect to t
\newcommand{\lodt}{{\frac{\d}{\d t}}_{\kern-1pt +}} % Lower right Dini derivative with respect to t
\newcommand{\la}{\langle} % <
\newcommand{\ra}{\rangle} % >
\newcommand{\de}{\ensuremath{\,\mathrm d}} % The differential for integrals (\de x)
\renewcommand{\d}{\mathrm d}  % The differential for integrals (\de x)
\newcommand{\bb}{\boldsymbol b} % Bold b
\newcommand{\ff}{\boldsymbol f}
\newcommand{\Bb}{\boldsymbol B} % Bold capital B
\newcommand{\Aa}{{\boldsymbol A}}
\newcommand{\hAa}{{\boldsymbol {\hat A}}}
\renewcommand{\gg}{\boldsymbol g}
\newcommand{\ggamma}{\boldsymbol\gamma} % Bold gamma
\newcommand{\ii}{\boldsymbol i} % Bold i
\newcommand{\jj}{\boldsymbol j} % Bold i
\newcommand{\jJ}{\boldsymbol J} % Bold i
\newcommand{\mmu}{\boldsymbol\mu} % Bold mu
\renewcommand{\ss}{\boldsymbol s} % Bold u
\renewcommand{\H}{\mathcal H} % Serif capital h
\newcommand{\X}{\mathsf X} % Serif capital X
\newcommand{\Y}{\mathsf X} % Serif capital y
\newcommand{\Z}{\mathsf Y} % Serif capital y
\newcommand{\indi}{\mathrm{I}}  % indicator function
\newcommand{\cN}{{\mathfrak N}} % directed subset of integers
\newcommand{\cB}{{\mathcal B}} % Borel sigma-algebra
\newcommand{\fu}[1]{f_{#1}} % Lebesgue measure
\newcommand{\prob}{\ensuremath{\mathcal{P}}} % Space of probability measures
\DeclareMathOperator{\supp}{supp} % Support of a measure
\newcommand{\TX}{\mathsf {T\kern-1.5pt X}} % Tangent space to X
\newcommand{\TY}{\mathsf {T\kern-1.5pt Y}} % Tangent space to Y
\newcommand{\rmC}{\mathrm C} % Space of continuous functions
\newcommand{\scalprod}[2]{\ensuremath{\langle #1, #2\rangle}} % Scalar product
\DeclareMathOperator{\cl}{cl}
\newcommand{\clconv}[1]{\overline{\operatorname{co}}\left(#1\right)} % Closed convexification of a MPVF
\newcommand{\conv}[1]{\operatorname{co}(#1)} % Convex hull
\newcommand{\rmS}{\mathrm S} % Essentially injetive maps
\newcommand{\symg}[1]{{\mathrm{Sym}(#1)}} % set for permutations
\newcommand{\resolvent}[1]{{\jJ_{#1}}}
\newcommand{\mm}{\mathfrak m}
\newcommand{\Sgp}{\boldsymbol S}
\newcommand{\precneq}{\mathrel{\text{\prec@eq}}}
\newcommand{\prec@eq}{%
  \oalign{%
    \hidewidth$\m@th\prec$\hidewidth\cr
    \noalign{\nointerlineskip\kern1ex}%
    $\m@th\smash{\Neq}$\cr
    \noalign{\nointerlineskip\kern-.5ex}%
  }%
}
\newcommand{\Neq}{\raisebox{0.65ex}{\rotatebox{90}{\scalebox{1}[-1]{$\nshortmid$}}}}
\newcommand{\cY}{\mathcal X}
\newcommand{\cZ}{\mathcal Y}
\newcommand{\cH}{\mathcal X}
\newcommand{\Sp}[1]{\mathcal{S}(#1)}
\newcommand{\euler}[1]{{\boldsymbol f}_{#1}}
\newcommand{\map}{{\boldsymbol L}}
\newcommand{\G}{\mathsf G}
\newcommand{\W}{\mathsf W}
\newcommand{\dommmo}{\iota\big(D(\Bb)\big)}
\title[Properties of invariant Lipschitz maps and dissipative operators]{Extension 
of monotone operators  
and Lipschitz maps invariant for a group of isometries}
\author{Giulia Cavagnari}
\address{Giulia Cavagnari: Politecnico di Milano, Dipartimento di Matematica, Piazza Leonardo Da Vinci 32, 20133 Milano (Italy)}
\email{giulia.cavagnari@polimi.it}
\author{Giuseppe Savar\'e}
\address{Giuseppe Savar\'e: Bocconi University,
  Department of Decision Sciences and BIDSA, Via Roentgen 1, 20136 Milano (Italy)}
\email{giuseppe.savare@unibocconi.it}
\author{Giacomo Enrico Sodini}
\address{Giacomo Enrico Sodini: Institut für Mathematik - Fakultät für Mathematik - Universität Wien, Oskar-Morgenstern-Platz 1, 1090 Wien (Austria)}
\email{giacomo.sodini@univie.ac.at}
\subjclass{Primary: 47B44, 37A40; Secondary: 54C20, 49Q22.}
 \keywords{Extension of Lipschitz maps, dissipative/monotone operators, measure-preserving maps, invariance by law, optimal transport}
\begin{document}

\begin{abstract}
  We study 
  monotone operators 
  in reflexive Banach spaces
  that are invariant with respect to a group 
  of 
  suitable isometric isomorphisms and we show that they always admit a maximal extension which preserves the same invariance. 
  A similar result applies to Lipschitz maps
  in Hilbert spaces, thus providing an 
  invariant version of Kirzsbraun-Valentine
  extension Theorem.
  
  We then provide a relevant application to the case of monotone operators in $L^p$-spaces of random variables which are invariant with respect to measure-preserving isomorphisms,
  proving that they always admit maximal dissipative extensions 
  which are still invariant by measure-preserving isomorphisms. 
  We also show that such  operators
  are law invariant, a much stronger property which is also inherited by their  resolvents, the Moreau-Yosida approximations, and the associated semigroup of contractions. 

  These results combine explicit representation
  formulae for the maximal extension 
  of a monotone operator based on selfdual lagrangians and a refined study of measure-preserving maps in standard  Borel spaces endowed with a nonatomic measure, 
  with applications to the approximation of arbitrary couplings between measures by 
  sequences of maps.
\end{abstract}

\maketitle
\tableofcontents
\thispagestyle{empty}

\section{Introduction}

The theory of maximal monotone operators
$\Aa:\cY\rightrightarrows \cY^*$
in Hilbert and reflexive Banach spaces
provides a very powerful framework to solve 
nonlinear equations (see e.g.~the review \cite{Bor10}).
We recall that an operator $\Aa\subset\cY\times\cY^*$ (which we identify with its graph) is said to be \emph{monotone} if
\[\scalprod{v-w}{x-y}\ge0\quad\text{for any }(x,v),\,(y,w)\in\Aa,\]
while $\Aa$ is said to be \emph{maximal monotone} if every proper extension of $\Aa$ fails to be monotone.

In the Hilbertian case, 
the theory can also be applied to 
differential inclusions of the form
\begin{equation}
\label{eq:diff}
    \frac\d{\d t} x(t)\in -\Aa x(t),\quad
    x(0)=x_0
\end{equation}
driven by a maximal monotone operator $\Aa$
and to prove the generation of a semigroup of contractions (see e.g.~\cite{BrezisFR,Barbu10}).

The notion of 
maximality 
of the (multivalued) operator $\Aa$ plays a crucial role, since 
by Minty-Browder theorem it is 
equivalent to 
the solvability of 
the resolvent equation
\begin{equation}
    \label{eq:resolvent}
    J(x)+\tau\Aa x\ni y,
\end{equation}
where $J$ is the duality map 
from $\cY$ to $\cY^*$
 \cite[Thm.~2.2]{Barbu10}.
In the Hilbertian framework,
the solution to \eqref{eq:resolvent}
corresponds to 
the solvability of the 
Implicit Euler Scheme
associated to \eqref{eq:diff}
and provides a general condition
for the existence of a solution to \eqref{eq:diff}.
In this respect, an essential tool is the well known 
fact that 
every monotone operator $\Aa$ admits a maximal extension \cite{DbF64,BrezisFR},
whose domain is contained
in the closed convex hull of
the domain of $\Aa.$

\medskip\noindent
Motivated by the study
of operators in Bochner-$L^p$ spaces
$\cY=L^p(\Omega,\cB,\P;\Y)$ (here $(\Omega, \cB)$ is a standard Borel space endowed with a nonatomic probability measure $\P$ and $\X$ is a reflexive and separable Banach space)
which are invariant by measure-preserving transformations of $\Omega$,
in this paper we address the general problem of 
finding maximal 
extensions of monotone
operators which are invariant by
a group $\G$ of 
suitable transformations of 
$\cY\times \cY^*$. 
More precisely, let us 
consider a group 
$\G$ of linear isomorphisms
acting on $\cY\times \cY^*$
whose elements $\mathsf U=(U,U')$
preserve 
the duality pairing and
the norms in $\cY\times \cY^*$,
i.e.~for every $(U,U')\in \G$
and every $z=(x,v)\in \cY\times \cY^*$
we have
\begin{equation}
    \label{eq:group}
    \langle U'v,Ux\rangle=
    \langle v,x\rangle,\quad
    |Ux|=|x|,\quad
    |U'v|_*=|v|_*.
\end{equation}
Given a monotone operator $\Aa\subset 
\cY\times \cY^*$
which is $\G$-invariant, i.e.
\begin{equation}
    \label{eq:Ginv-intro}
    (x,v)\in \Aa,\quad
    (U,U')\in \G\quad\Rightarrow\quad
    (Ux,U'v)\in \Aa,
\end{equation}
we will prove (see Theorem \ref{thm:graziebauinv}) that there exists 
a maximal extension $\hAa$ 
of $\Aa$
preserving the $\G$-invariance; we will also find $\hAa$
so that its 
proper domain $\dom(\hAa)$ 
does not exceed
the closed convex hull of 
$\dom(\Aa).$

Since it is not clear
how to adapt to this
context the classical proof
based on the Debrunner-Flor 
and Zorn Lemma
(see e.g.~\cite[Thm.~2.1 and Cor.~2.1, Chap.II]{BrezisFR}),
we will use the 
powerful explicit construction
of \cite{BauWang2009}. This is
based on 
kernel averages of convex functionals
and on the characterization
of monotone and 
maximal monotone operators
via suitable convex Lagrangians 
on $\cY\times \cY^*$,
a deep theory started with
the seminal paper 
\cite{Fitzpatrick88} (where the so called Fitzpatrick's function is introduced for the first time) and further developed in a more recent 
series of
relevant contributions
(see e.g.~\cite{MlT01,Burachik-Svaiter03,Penot04,Penot04cras,Ghoussoub08,Visintin17} and the references therein).

The advantage of this direct approach
is that 
it provides 
an explicit formula 
for the extension of $\Aa$ which
behaves quite well with respect to
the action of the group $\G$.
 As an intermediate step, which can be relevant also in other applications independently
of $\G$-invariance,  
we will also show (Theorem \ref{thm:main1}) how to modify
the construction of 
\cite{BauWang2009}
in order to confine
the domain of the extension
$\hAa$
to the closed convex hull
of $\dom(\Aa)$ (see also [Theorem 2.13]\cite{BauWang2010} for a partial result in this direction).

As a byproduct,
we can adapt the 
same strategy of 
\cite{BauWang2010}
to prove a version of the
Kirszbraun-Valentine 
extension theorem (see \cite{Kirszbraun34,Valentine43,Valentine45}) for 
$\G$-invariant Lipschitz maps
in Hilbert spaces (Theorem \ref{thm:inv-lip-ext}):
it states that every $L$-Lipschitz map $f:D\to\H$ 
defined in a subset $D$ of an Hilbert space $\H$, 
whose graph is invariant
with respect to the action of a group
$\G$ of isometries of $\H$,
can be extended to 
a $L$-Lipschitz function $\hat f:\H\to \H$ which is $\G$-invariant as well.
The basic idea here
still goes back to Minty:
the graphs of nonexpansive
maps in Hilbert spaces
are in one-to-one correspondence
with graphs of 
monotone maps via
the Cayley transformation
$T:\H\times \H\to \H\times \H$ defined as
\begin{equation}
    \label{eq:minty-rotations-intro}
    T(y,w):=\frac 1{\sqrt 2}(y-w,y+w).
\end{equation}

It is worth noticing that 
such a correspondence allowed the authors of \cite{RS05} to use for the first time the Fitzpatrick's function to prove the Kirszbraun-Valentine theorem (see \cite{Kirszbraun34,Valentine43,Valentine45}), which states that every $1$-Lipschitz continuous map can be extended to the whole $\H$ (see also \cite{B07} where this approach is improved in order to obtain an extension with an optimal range). 
The same correspondence, together with the explicit construction of a maximal extension of a monotone operator $\Aa$ in \cite{BauWang2009}, is used in \cite{BauWang2010} to provide the first constructive proof of the Kirszbraun-Valentine theorem.

We will also provide in the appendix an alternative
proof based on another more
recent explicit formula 
 for such kind of extension 
 given by
 \cite{ALM21} (see also \cite{ALM18}).

\medskip

\medskip

These results, besides being interesting by themselves, find interesting  applications in the case 
when
$\cY$ is the $L^p$-space of random variables
\begin{equation}\label{eq:HL2}
    \cY=L^p(\Omega,\cB,\P;\Y),\quad
    \cY^*=L^{p^*}(\Omega, \cB,\P;\Y^*),\quad
    p,p^*\in (1,+\infty), \ \frac 1p+\frac 1{p^*}=1,
\end{equation}
over a space of parametrizations $(\Omega,\cB,\P)$, where $(\Omega,\cB)$ is a standard Borel space, $\P$ is a nonatomic probability measure, and $\Y$ is a separable and reflexive Banach space, while $\G$ is 
a group of isomorphisms generated by \emph{measure-preserving maps}, i.e. $\cB-\cB$ measurable maps $g:\Omega\to\Omega$ which are essentially injective and such that $g_\sharp\P=\P$, where $g_\sharp\P$ denotes the push-forward of $\P$ by $g$. 
Every measure-preserving isomorphism $g$ 
induces 
an element $\mathsf U_g$ of $\G$ 
whose action on $(X,X')\in \cY\times \cY^*$ is simply given by $\mathsf U_g(X,X')=
(X\circ g,X'\circ g)$. 

The interest for invariance by measure-preserving isomorphisms 
in $\cY\times \cY^*$ is justified by
its link with the stronger property 
of \emph{law invariance}: a set $\Aa\subset 
\cY\times \cY^*$ is law invariant if 
whenever $(X,X')\in \Aa$ 
then $\Aa$ also contains all the pairs $(Y,Y')\in \cY\times \cY^*$ 
with the same law of $(X,X')$, i.e.~$(Y,Y')_\sharp\P=(X,X')_\sharp\P.$
It is clear that law invariant subsets of 
$\cY\times \cY^*$ are also invariant by measure-preserving isomorphisms; using the results of \cite{Brenier-Gangbo03},
we will show that the converse implication holds for closed sets: therefore for closed sets these two properties are in fact equivalent. 
Since the graph of a maximal monotone operator
is closed, we obtain that
a monotone operator in $\cY\times \cY^*$
whose graph is invariant by the action of
measure-preserving isomorphisms
admits a maximal monotone extension
which is law invariant (Theorem \ref{thm:maximal-monotonicity}).

This framework is exploited in Section \ref{sec:Borel} (where we study 
the approximation of transport maps and plans 
by various classes of measure-preserving isomorphisms) and 
Section \ref{subsec:invariant-maps}.

\medskip
The Hilbertian setting when $p=p^*=2$ and 
$\Y$ is an Hilbert space 
(so that $\cY=L^2(\Omega,\cB,\P;\Y)$ 
is a Hilbert space as well that can be 
identified with its dual $\cY^*$) 
provides an important 
case, which we will further exploit in \cite{CSS2grande}.
It turns out that maximal dissipative operators $\Bb$ on $L^2(\Omega,\cB,\P;\Y)$, invariant by measure-preserving isomorphisms, 
are the Hilbertian counterparts of maximal totally dissipative operators on the Wasserstein space $\prob_2(\Y)$ of laws, where $\prob_2(\Y)$ denotes the space of Borel probability measures with finite second moment 
 endowed with the so-called Kantorovich-Rubinstein-Wasserstein distance $W_2$. The results obtained in Section \ref{subsec:invariant-maps} in the framework \eqref{eq:HL2} are used in \cite{CSS2grande} to develop a well-posedness theory for dissipative evolution equations in the metric space $(\prob_2(\Y),W_2)$, together with a Lagrangian characterization for the solution of the corresponding Cauchy problem.
\medskip

Besides the direct application of the general invariance extension result provided in Section \ref{sec:appC}, in Section \ref{subsec:invariant-maps} we also analyze further properties of Lipschitz functions and maximal dissipative operators on $\cY= L^2(\Omega,\cB,\P;\Y)$, which are invariant by measure-preserving isomorphisms. 
In particular we prove 
that 
the effect of a Lipschitz invariant map $\map:\cY\to\cY$ 
on an element $X\in \cY$ 
can always be represented as 
\[\text{$\map X(w)=l(X(w),X_\sharp\P)$ 
\quad
for a.e.~$\omega\in
    \Omega$}\]
where
$l:\Sp{\Y}\to\Y$ is a (uniqueley determined) continuous map defined in
\[\Sp{\Y}:=\Big\{(x,\mu)\in \Y\times \prob_2(\Y)\,:\,x\in \supp(\mu)\Big\}\]
whose sections $l(\cdot,\mu)$ are Lipschitz as well, for every $\mu\in \prob_2(\Y).$

An important application of these results concerns the resolvent operator, the Moreau-Yosida approximation, and the semigroup 
associated with a maximal dissipative invariant operator $\Bb$ in $L^2(\Omega,\cB,\P; \X)$, for which we
obtain new relevant representation formulae
(Theorem \ref{thm:invTOlawinv}). 

\medskip
The above structural characterizations rely on
various approximation properties 
for couplings between probability measures in terms of maps and measure-preserving transformations.
We collect them in Section \ref{sec:Borel}, 
with the aim to present
many important results available in the literature (cf. \cite{Brenier-Gangbo03,CD18,gangbotudo,pra}) 
in a unified framework and (in some cases) a slightly more general setting
adapted to Section \ref{subsec:invariant-maps}.

\mysubsubsection{Acknowledgments.}
 G.S.~and G.E.S.~gratefully acknowledge the support of the Institute for Advanced Study of the Technical University of Munich, funded by the German Excellence Initiative.
 G.C.~and G.S. have been supported by the MIUR-PRIN 2017
project \emph{Gradient flows, Optimal Transport and Metric Measure Structures}.  G.C. also acknowledges the partial support of INDAM-GNAMPA project 2022 \emph{Evoluzione e controllo ottimo in spazi di Wasserstein} (CUP\_E55F22000270001) and
the funds FSR Politecnico di Milano Prog.TDG3ATEN02. G.S.~also thanks IMATI-CNR, Pavia.

\section{Extension of monotone operators 
and Lipschitz maps
invariant by a group of isometries}\label{sec:appC}
\newcommand{\sfc}{\mathsf c}
\newcommand{\cW}{\mathcal Z}
Let $\cY$ be a reflexive Banach space with norm $|\cdot|$ and let $\cY^*$ be its dual endowed with the dual norm $|\cdot|_*$.

We denote by 
$\sfc:\cY^*\times \cY\to \R$, 
 the duality pairing 
 $\la \cdot, \cdot \ra$ between  $\cY^*$ and $\cY$ and by
$\cW$ the product space $\cY\times \cY^*$ with dual
$\cW^*:=\cY^*\times \cY$.
 
A (multivalued) operator 
$\Aa:\cY\rightrightarrows \cY^*$  (which we identify with
its graph, a subset of 
$ \cY \times \cY^*$)
is monotone if it satisfies
\begin{equation}
    \label{eq:monotonicity}
    \la v-w, x-y \ra \ge 0 \quad \text{ for every } (x,v), (y,w) \in \Aa.
\end{equation}
The proper domain $\dom(\Aa)\subset \cY$ 
of $\Aa$ is just the projection on the first component
of (the graph of) $\Aa.$
A monotone operator $\Aa$ is maximal if any monotone operator in $\cY\times \cY^*$ containing $\Aa$ coincides with $\Aa.$

In order to address the extension problem
of monotone operators $\Aa \subset \cY \times \cY^*$ invariant by the action of 
a group of isometric isomorphisms, 
it is crucial to have
some explicit formula 
providing a maximal extension of $\Aa$.
In this respect, 
the characterization of
monotone and maximal monotone operators
by means of suitable ``contact sets'' of convex functionals in 
$\cY\times \cY^*,$ started with
the seminal paper 
\cite{Fitzpatrick88} and further developed in a more recent 
series of
relevant contributions
(see e.g.~\cite{MlT01,Burachik-Svaiter03,Penot04,Penot04cras,Ghoussoub08,Visintin17} and the references therein),
and the kernel averaging 
operation developed by \cite{BauWang2009}
provide extremly powerful tools,
that we are going to quickly recall in the next section.
We will also show how to slightly improve
this construction in order 
to obtain an explicit formula
providing a maximal extension of $\Aa$ whose
domain is contained in the closed convex hull of $\dom(\Aa)$. In this connection, we mention that the existence of a maximal extension of $\Aa$ with the desired above-mentioned optimality for the domain can be deduced by \cite{B07}, thanks to the correspondence revealed by Minty between monotone operators and \emph{firmly nonexpansive mappings}. Indeed, \cite{B07} uses the Fitzpatrick function to prove the Kirszbraun-Valentine extension theorem for firmly non-expansive mappings with optimal range localization. However, part of the proof still relies on Zorn's Lemma and it is not entirely constructive.

\subsection{Maximal extensions of monotone operators by self-dual Lagrangians}
\label{subsec:Fitz}
\renewcommand{\fu}[1]{\sfc_{#1}}
\newcommand{\Fitz}[1]{\mathsf f_{#1}}
\newcommand{\Penot}[1]{\mathsf p_{#1}}
\newcommand{\frs}{\mathfrak s}
Following the presentation of \cite{BauWang2009} and \cite{Penot04}, 
given a set $\Aa \subset \cY \times \cY^*$ and its
indicator function
$\indi_\Aa$,
we consider the proper function 
$\fu\Aa: \cY^* \times \cY \to (-\infty, +\infty]$ defined as
\[ \fu\Aa(v,x):=
\sfc(v,x)+\indi_\Aa(x,v)=
\begin{cases} \la v, x \ra 
    \quad &\text{ if } (x,v) \in \Aa, \\ + \infty \quad &\text{ else}.\end{cases}\]
Notice that $\fu \Aa$ has an affine minorant if $\Aa$ is monotone, in the sense that
\[ \fu \Aa (v,x) \ge 
\la v_0,x \ra + \la v,x_0\ra - \la v_0, x_0 \ra
\quad \text{ for every } (v,x) \in \cY^* \times \cY,\]
where $(x_0,v_0)\in \Aa$ is an arbitrary given point.

Recalling that 
the \emph{convex conjugate} 
$g^*:\cW^* \to (-\infty,+\infty]$
of a proper function
$g: \cW \to (-\infty, +\infty]$ 
with an affine minorant
is defined as 
\begin{align*}
    g^*(v,x)&:= \sup_{(x_0,v_0) \in \cW} \left \{ \la v_0,x \ra + \la v,x_0\ra -g(x_0,v_0)\right \}, \quad (v,x) \in \cY^* \times \cY,
\end{align*}
with an analogous definition
in the case of 
a function $h:\cW^*  \to (-\infty,+\infty]$, 
we can introduce
the Fitzpatrick function
$\Fitz\Aa:\cW\to (-\infty,+\infty]$ and 
the convex l.s.c.~relaxation 
$\Penot\Aa:\cW^*\to (-\infty,+\infty]$ of
$\fu\Aa$:
\begin{equation}
    \label{eq:main-funct}
    \Fitz\Aa:=\fu\Aa^*,
    \quad
    \Penot{\Aa}:=\Fitz\Aa^*=
    \fu{\Aa}^{**}.
\end{equation}

 It will be often useful to switch the order 
 of the components of 
 elements in $\cY\times \cY^*$: 
 we will denote by $\frs:\cY\times \cY^*
 \to \cY^*\times \cY$ the switch map 
 \begin{equation}
     \label{eq:switch}
     \frs(x,v):=(v,x).
 \end{equation}
 If  $g$ is any function defined 
 in $\cY \times \cY^*$ (resp.~in $\cY^*\times \cY$)
 we set $g^\top:=g\circ\frs$
 (resp.~$g^\top:=g\circ\frs^{-1}$).
 In particular $\sfc^\top$ is the duality pairing
 between $\cY$ and $\cY^*$.
 
 We collect in the following statement some useful properties.

 \begin{theorem}[Representation of monotone operators]
     \label{thm:omnibus}
     \ 
     \begin{enumerate}[\rm (1)]
         \item If $f:\cW\to (-\infty,+\infty]$
         is a convex l.s.c.~function
         satisfying 
         $f\ge \sfc^\top$,
         then 
         \begin{equation}
             \label{eq:contact}
            \text{the contact set }
            \Aa_f:=\Big\{(x,v)\in \cW:
            f(x,v)=\langle v,x\rangle \Big\}
            \text{ is monotone,  }
            f^*\ge \Fitz{\Aa_f}^\top,
         \end{equation}
         and 
         \begin{equation}
         \label{eq:switched-subd}
             \Aa_f\subset 
             \boldsymbol{T}_f:=\Big\{(x,v)\in \cW:
             (v,x)\in \partial f(x,v)\Big\},
         \end{equation}
          where $\partial f$ denotes the subdifferential of $f$.
         An analogous statement
         holds for $g:\cW^*\to(-\infty,+\infty]$
         satisfying $g\ge \sfc$,
         by setting $\Aa_g:=\Aa_{g^\top}.$
         \item If $\Aa$ is monotone,
         then 
         \begin{gather}
         \label{eq:1}
             \Fitz\Aa^\top\le 
             \Penot\Aa\le 
             \sfc_\Aa;\\
             \label{eq:22}
             \Penot\Aa\ge \sfc;\quad
             \Penot\Aa=\sfc\quad
             \text{on }\Aa,          
         \end{gather}
        i.e.~$\Aa\subset \Aa_{\Penot\Aa}.$
        \item 
        If $\Aa$ is a monotone operator and $f:\cW\to (-\infty,+\infty]$
        is a convex l.s.c.~function
        satisfying 
        $\sfc^\top\le f\le \Penot\Aa^\top$,
        then the contact set $\Aa_f$
        is a monotone extension of $\Aa.$
        \item 
        If $\Aa \subset \cY \times \cY^*$ is maximal monotone 
        then 
        $\Fitz\Aa\ge \sfc^\top$. 
        Conversely, if $\Aa$ is monotone and
        $\Fitz\Aa\ge \sfc^\top$,
        then 
        \begin{equation}
            \hAa:=
            \Big\{z\in \cW:\Fitz\Aa(z)=
            \sfc^\top (z)\Big\}
            =
            \Big\{z\in \cW:
            \Penot\Aa(z)=
            \sfc (z)\Big\}
        \end{equation}
        provides a maximal monotone extension of $\Aa$.
        \item
        If $f:\cW\to (-\infty,+\infty]$
         is a convex l.s.c.~function
         satisfying 
         $f\ge \sfc^\top$,
         then $\Aa_f$
         is maximal monotone if and only if $f^*\ge \sfc$
         and in this case $\Aa_f=
         \Aa_{(f^*)^\top}.$
         \item 
         If 
         $f:\cW\to (-\infty,+\infty]$
         is a convex l.s.c.~function
         satisfying the self-duality property
         \begin{equation}
             \label{eq:self-duality}
             f^*=f^\top,
         \end{equation}
         then $f\ge \sfc^\top$ and 
         the 
         contact set $\Aa_f$
         is maximal monotone.
     \end{enumerate}
 \end{theorem}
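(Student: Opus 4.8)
The plan is to derive the pointwise bound $f\ge\sfc^\top$ directly from self-duality via the Fenchel--Young inequality evaluated along the ``diagonal'' of the switch map $\frs$, and then to feed this into part~(5) of Theorem~\ref{thm:omnibus}, which already characterizes maximality of contact sets.

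First I would recall the Fenchel--Young inequality for the convex l.s.c.\ function $f:\cW\to(-\infty,+\infty]$ and its conjugate $f^*:\cW^*\to(-\infty,+\infty]$: for every $z\in\cW$ and every $z^*\in\cW^*$,
\[
f(z)+f^*(z^*)\ge \langle z^*,z\rangle_{\cW^*,\cW},
\]
which is immediate from the definition of $f^*$ as a supremum. The key idea is to specialize $z=(x,v)\in\cW$ and $z^*=\frs(z)=(v,x)\in\cW^*$. Since the product duality pairing between $\cW=\cY\times\cY^*$ and $\cW^*=\cY^*\times\cY$ sends the pair $\big((x,v),(v,x)\big)$ to $\langle v,x\rangle+\langle v,x\rangle=2\langle v,x\rangle$, the inequality becomes
\[
f(x,v)+f^*(v,x)\ge 2\langle v,x\rangle\qquad\text{for every }(x,v)\in\cW.
\]

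Next I would invoke the self-duality hypothesis $f^*=f^\top$, which by the definition of $\top$ reads $f^*(v,x)=f(x,v)$. Substituting into the previous display gives $2f(x,v)\ge 2\langle v,x\rangle$, that is $f(x,v)\ge\langle v,x\rangle=\sfc^\top(x,v)$ for every $(x,v)\in\cW$; this is exactly the asserted bound $f\ge\sfc^\top$. Reading the same inequality after swapping the arguments yields $f^\top(v,x)=f(x,v)\ge\langle v,x\rangle=\sfc(v,x)$, so that $f^*=f^\top\ge\sfc$ as well.

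Finally, with $f$ convex and l.s.c., and both $f\ge\sfc^\top$ and $f^*\ge\sfc$ now in hand, I would apply part~(5) of Theorem~\ref{thm:omnibus}, which asserts precisely that under $f\ge\sfc^\top$ the contact set $\Aa_f$ is maximal monotone if and only if $f^*\ge\sfc$. This immediately gives that $\Aa_f$ is maximal monotone, completing the argument. I do not expect a serious obstacle here: the only delicate point is the bookkeeping of the product duality pairing on $\cW\times\cW^*$, in particular the factor $2$ produced by evaluating along $z^*=\frs(z)$, after which the statement reduces to a one-line consequence of the already established part~(5).
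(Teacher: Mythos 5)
Your proposal addresses only item (6) of the six-part statement: items (1)--(5) are presupposed, with (5) used as the key ingredient. As a proof of the whole theorem it is therefore incomplete. That said, the paper itself establishes (1)--(5) essentially by citation (Fitzpatrick, Penot, Burachik--Svaiter), and your use of (5) to obtain (6) involves no circularity, since (5) is logically prior to (6); so the meaningful comparison is on item (6), where the paper simply defers to the literature (Fact 5.6 of Bauschke--Wang).

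For that item your derivation is correct, and it is in substance the argument hidden behind the paper's citation, spelled out rather than quoted. The bookkeeping is right: with the product pairing $\langle (v^*,x^*),(x,v)\rangle=\langle v^*,x\rangle+\langle v,x^*\rangle$ between $\cW^*$ and $\cW$, evaluating Fenchel--Young at $z=(x,v)$ and $z^*=\frs(z)=(v,x)$ gives $f(x,v)+f^*(v,x)\ge 2\langle v,x\rangle$, and self-duality $f^*=f^\top$ turns this into $f\ge \sfc^\top$; reading the same inequality on $\cW^*$ gives $f^*=f^\top\ge \sfc$, and item (5) then yields maximal monotonicity of $\Aa_f$. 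One pedantic point you leave implicit: self-duality forces $f$ to be proper (an identically $+\infty$ function has conjugate identically $-\infty$, which cannot equal $f^\top$), so Fenchel--Young is indeed applicable. The net effect is a short, self-contained replacement for the external reference on item (6), which is arguably preferable to the paper's treatment of that item, provided you make explicit that (1)--(5) are being taken from the cited sources exactly as the paper does.
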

 \begin{proof}
 We give a few references and sketches for the proof.
 
\medskip
(1) 
The inclusion in \eqref{eq:switched-subd}
follows by \cite[Theorem 2.4]{Fitzpatrick88} 
and shows in particular  
that $\Aa_f$ is monotone since $\mathbf{T}_f$ is monotone by \cite[Proposition 2.2]{Fitzpatrick88} . Clearly $f^\top \le \sfc_{\Aa_f}$ so that, passing to the conjugates, the reverse inequality follows.    

\medskip
(2) The result in \eqref{eq:1} is \cite[Proposition 4(c)]{Penot04} while \eqref{eq:22} is \cite[Proposition 4(f)]{Penot04}.

\medskip
(3) This follows by (1) and (2).

\medskip
(4) The first implication follows by \cite[Theorem 5]{Penot04}. 
 The second implication follows by \cite[Theorem 6]{Penot04} choosing $g:=f_\Aa$ and \cite[Theorem 3.1]{Burachik-Svaiter03}.

\medskip
(5) The first part of the sentence is \cite[Theorem 6]{Penot04} while the equality $\Aa_f=\Aa_{(f^*)^\top}$ can be found e.g.~in\cite[Theorem 3.1]{Burachik-Svaiter03}.

\medskip
(6) This is contained in the statement and in the proof of \cite[Fact 5.6]{BauWang2009}.

 \end{proof}

 The previous result suggests 
 a strategy (cf. Theorem \ref{thm:bau} below) to construct 
 a maximal extension of a given monotone operator $\Aa$ starting
 from a convex and l.s.c.~function $f:\cW\to (-\infty,+\infty]$
 satisfying 
 \begin{equation}
     \label{eq:representation}
     \Fitz\Aa\le f\le \Penot\Aa^\top\quad\text{in }\cW.
 \end{equation}
 Using the kernel average introduced
 in \cite{BauWang2009}
 one obtains a self-dual
 Lagrangian $R_f$ 
 which satisfies $\Fitz\Aa\le 
 R_f\le \Penot\Aa^\top$,
 so that the contact set of $R_f$ 
 is a 
 maximal monotone extension of $\Aa.$
 We introduce a function
 \begin{equation}
 \label{eq:kernel}
    \text{$\psi:\cW\to[0,+\infty)$ satisfying
 the simmetry and self-duality condition} 
 \quad
 \psi=\psi^{\vee}=(\psi^*)^\top,
 \end{equation}
 where $\psi^\vee(z):=\psi(-z)$ for every $z \in \mathcal Z.$
The assumption in \eqref{eq:kernel} in particular implies 
that 
\begin{equation}
    \label{eq:psi-regularity}
    \psi\text{ is continuous, convex, and }
    \psi(0)=\psi^*(0)=0,
\end{equation}
 since
\begin{displaymath}
    0\le \psi(0)=\psi^\top(0)=
    \psi^*(0)=-\inf \psi\le 0.
\end{displaymath}
A typical example is given by
\begin{equation}
\label{eq:ex-kernel}
   \psi(x,v):=\frac 1p|x|^p+\frac{1}{p^*}|v|_*^{p^*} 
   \quad\text{where $p,p^*\in (1,+\infty)$ are given conjugate exponents.}
\end{equation}
 The following result is an immediate consequence of \cite[Fact 5.6, Theorem 5.7, Remark 5.8]{BauWang2009}.
\begin{theorem}[Kernel averages and maximal monotone extensions \cite{BauWang2009}]
\label{thm:bau}
Let $\Aa \subset \cY \times \cY^*$ be a monotone operator and let $\Fitz\Aa:=\fu\Aa^*, \Penot\Aa:=\fu\Aa^{**}$ be defined as above
and $\psi:\cW\to[0,+\infty)$ a self-dual function
as in \eqref{eq:kernel}.
Let $f: \cW \to (-\infty,+\infty]$ be a  lower semicontinuous and convex function satisfying \eqref{eq:representation}.
\begin{enumerate}[\rm (1)]
    \item 
    The function
$R_f: \cW \to (-\infty,+\infty]$ 
defined as
\begin{equation}\label{eq:rf}
    R_f(x,v):= \min_{(x,v)=\frac{1}{2}(x_1+x_2,v_1+v_2)} \left \{ \frac{1}{2}f(x_1,v_1) + \frac{1}{2}f^{*}(v_2,x_2)+\frac{1}{4}
    \psi(x_1-x_2,v_1-v_2) 
    \right \},
\end{equation}
 is 
self-dual and satisfies the bound
\eqref{eq:representation}, i.e.~$\Fitz\Aa\le R_f\le\Penot\Aa^\top.$
\item The operator $\tilde{\Aa}$ defined as the contact set of $R_f$
\begin{equation}\label{eq:btilde}
\tilde{\Aa} := \left \{ (x,v) \in 
\cW : R_f(x,v)=\la v,x\ra\right \},
\end{equation}
 is a 
maximal monotone extension of $\Aa$.
\end{enumerate}
\end{theorem}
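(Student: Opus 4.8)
The plan is to deduce assertion (2) from assertion (1) together with Theorem~\ref{thm:omnibus}, so that the whole content reduces to the two claims on $R_f$ in (1): self-duality \eqref{eq:self-duality} and the sandwich \eqref{eq:representation}. Indeed, once $R_f$ is known to be proper, convex, lower semicontinuous and self-dual, part~(6) of Theorem~\ref{thm:omnibus} yields both $R_f\ge\sfc^\top$ and the maximal monotonicity of the contact set $\tilde{\Aa}=\Aa_{R_f}$, while part~(3), applied with $R_f$ in place of $f$ and using the upper bound $R_f\le\Penot\Aa^\top$, gives $\Aa\subseteq\tilde{\Aa}$; together these prove (2). As a preliminary, rewriting \eqref{eq:rf} via the switch map $\frs$ and the $(\cdot)^\top$ notation as
\[ R_f(z)=\min_{z=\frac12(z_1+z_2)}\Big\{\tfrac12 f(z_1)+\tfrac12 (f^*)^\top(z_2)+\tfrac14\psi(z_1-z_2)\Big\}, \]
one recognises $R_f$ as the kernel average of $f_1:=f$ and $f_2:=(f^*)^\top$ with the self-dual kernel $\psi$. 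Both $f$ and $f^*$ are proper convex l.s.c.\ (hence admit affine minorants), and the coercivity of $\psi$ (as in the model case \eqref{eq:ex-kernel}) makes the bracketed expression coercive in $z_1-z_2$ for each fixed $z$; standard convex analysis then shows that $R_f$ is proper, convex, lower semicontinuous and that the minimum is attained. These routine verifications I would dispose of first.

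The heart of the matter is self-duality, which I would obtain by computing $R_f^*$ explicitly. Writing the constraint through the midpoint $m=\frac12(z_1+z_2)$ and the difference $w=z_1-z_2$, the inner supremum over $m$ is the conjugate of a sum, hence an inf-convolution of the shifted, rescaled conjugates $\frac12 f_i^*(2\,\cdot)$; this produces a bilinear coupling $\frac12\la\eta_2-\eta_1,w\ra$ between $w$ and the inf-convolution variables $\eta_1+\eta_2=\zeta$. The decisive step is to exchange the order of $\sup_w$ and $\inf_{\eta_1,\eta_2}$: the integrand is concave in $w$ (bilinear minus the convex $\psi$) and convex in $(\eta_1,\eta_2)$ along the affine constraint, so a minimax theorem applies, coercivity of $\psi$ ensuring that the saddle value is finite and attained. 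After the exchange, $\sup_w\{\tfrac12\la\eta_2-\eta_1,w\ra-\tfrac14\psi(w)\}=\tfrac14\psi^*(2(\eta_2-\eta_1))$, and the rescaling $\zeta_i:=2\eta_i$ turns the result into
\[ R_f^*(\zeta)=\min_{\zeta=\frac12(\zeta_1+\zeta_2)}\Big\{\tfrac12 f_1^*(\zeta_1)+\tfrac12 f_2^*(\zeta_2)+\tfrac14\psi^*(\zeta_1-\zeta_2)\Big\}, \]
using that $\psi^*$ is even; that is, conjugation sends the kernel average of $(f_1,f_2;\psi)$ to the kernel average of $(f_1^*,f_2^*;\psi^*)$.

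It remains to feed in the self-dual data. From \eqref{eq:kernel} we have $\psi^*=\psi^\top$; moreover $f_1^*=f^*=(f_2)^\top$ and, since $f^{**}=f$, $f_2^*=((f^*)^\top)^*=f^\top=(f_1)^\top$, where I use the identity $(h^\top)^*=(h^*)^\top$ that follows immediately from the symmetry $\la\zeta,z\ra=\la z^\top,\zeta^\top\ra$ of the duality pairing between $\cW$ and $\cW^*$. Substituting $\zeta_1=z_2^\top$, $\zeta_2=z_1^\top$ in the formula for $R_f^*$ and using once more that $\psi$ is even, the right-hand side becomes exactly $R_f(\zeta^\top)=R_f^\top(\zeta)$, which is the self-duality \eqref{eq:self-duality}.

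Finally the sandwich. For the upper bound I would use the admissible choice $z_1=z_2=z$ in \eqref{eq:rf}, whence $\psi(0)=0$ and $R_f(z)\le\tfrac12 f(z)+\tfrac12(f^*)^\top(z)$; since $f\le\Penot\Aa^\top$ by hypothesis and $f^*\le\Fitz\Aa^*=\Penot\Aa$ by conjugating $f\ge\Fitz\Aa$, both terms are dominated by $\Penot\Aa^\top$ and $R_f\le\Penot\Aa^\top$ follows. The lower bound then comes for free from self-duality: conjugating $R_f\le\Penot\Aa^\top$ reverses the inequality to $R_f^*\ge(\Penot\Aa^\top)^*=(\Penot\Aa^*)^\top=\Fitz\Aa^\top$ (because $\Penot\Aa^*=\fu\Aa^{***}=\fu\Aa^*=\Fitz\Aa$), and applying $(\cdot)^\top$ together with $R_f^*=R_f^\top$ gives $R_f\ge\Fitz\Aa$. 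This proves \eqref{eq:representation} and completes (1). The single genuine obstacle is the minimax exchange in the self-duality computation: one must rule out a duality gap and secure attainment, which is precisely where the nonnegativity and coercivity of the self-dual kernel $\psi$ and the affine minorants inherited from the monotonicity of $\Aa$ enter; everything else is bookkeeping with $\frs$ and conjugation.
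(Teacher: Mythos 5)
Your proposal is correct, but it does something the paper itself does not: the paper offers no proof of Theorem \ref{thm:bau} at all, stating only that it is an immediate consequence of [Fact 5.6, Theorem 5.7, Remark 5.8] of \cite{BauWang2009}. What you have written is in substance a reconstruction of that cited argument. The reduction of claim (2) to claim (1) through parts (3) and (6) of Theorem \ref{thm:omnibus} is exactly the intended mechanism, and your three computational blocks --- the conjugation rule sending the kernel average of $(f,(f^*)^\top;\psi)$ to the kernel average of $(f^*,f^\top;\psi^*)$, the substitution of the self-dual data (using $(h^\top)^*=(h^*)^\top$, $f^{**}=f$ and $(\psi^*)^\top=\psi=\psi^\vee$) to obtain $R_f^*=R_f^\top$, and the sandwich (upper bound via the choice $z_1=z_2=z$ together with $f^*\le \Fitz\Aa^*=\Penot\Aa$, lower bound by conjugating the upper bound and invoking self-duality) --- are all sound.

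The one point that needs repair is precisely the step you single out as the obstacle. You justify the sup/inf exchange, the absence of a duality gap, and the attainment of the minima by the coercivity of $\psi$ ``as in the model case \eqref{eq:ex-kernel}''. Coercivity, however, is not among the hypotheses: \eqref{eq:kernel} only gives that $\psi$ is finite, convex, even and self-dual, and in an infinite-dimensional space this does not obviously imply superlinear growth (for a self-dual $\psi$, superlinearity is equivalent to boundedness on bounded sets, which can fail for continuous convex functions). Fortunately coercivity is not what is needed: by \eqref{eq:psi-regularity} $\psi$ is finite everywhere and hence continuous, and this is exactly the constraint qualification for the Fenchel--Rockafellar duality theorem applied to $F+G\circ A$, where $F(z_1,z_2):=\frac12 f(z_1)+\frac12 (f^*)^\top(z_2)$, $G:=\frac14\psi$ and $A(z_1,z_2):=z_1-z_2$; strong duality with dual attainment then yields your conjugation formula with the minimum attained, no compactness or coercivity being required. (Properness and lower semicontinuity of $R_f$, and attainment in \eqref{eq:rf}, then follow by applying the same formula twice, since $R_f^{**}$ is again the kernel average of $(f,(f^*)^\top;\psi)$, rather than from a direct coercivity argument.) I would also avoid your intermediate step ``conjugate of a sum equals inf-convolution of the conjugates'' at fixed difference $w$: that identity needs its own qualification condition and in general a closure operation (the domains of the two shifted functions may not even intersect for some $w$), whereas the one-shot duality argument on the product space sidesteps this entirely; note that the easy inequality between $R_f^*$ and the kernel average of the conjugates is pure Fenchel--Young and needs no exchange at all. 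With these adjustments your proof is complete and coincides, as far as I can tell, with the proof in \cite{BauWang2009} on which the paper relies.
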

We want to show that, for a suitable choice of $f$ as in the previous theorem, we can produce a maximal monotone extension of $\Aa$ with domain included in $ D:=\clconv{\dom(\Aa)}$. 
We claim that such $f$ can be defined as
\begin{equation}\label{eq:f0}
f(x,v):= \Fitz\Aa(x,v)+ \indi_C(x,v), \quad (x,v)
\in \cY \times \cY^*,\quad
C:=D\times \cY^*,
\end{equation}
where $\indi_C$ is the indicator function of $C=D\times \cY^*$ i.e. $\indi_C(x,v)=0$ 
if $x \in D$ and $\indi_C(x,v)=+\infty$ if $x \notin D$.

\newcommand{\episum}{\mathbin\Box}
\begin{theorem}[A maximal monotone extension with minimal domain]
    \label{thm:main1}
    Let $\Aa \subset \cY \times \cY^*$ be a 
    monotone operator and let 
    $f:\cW\to (-\infty, + \infty]$
    be as in \eqref{eq:f0}. The following hold:
    \begin{enumerate}[\rm (1)]
        \item 
    $f$ is a 
    l.s.c.~and convex function such that $\Fitz\Aa \le f \le \Penot\Aa^\top $;
    \item $\dom(f)\subset C=D\times \cY^*$ and $
\dom(f^*) \subset \frs(C)=\cY^*\times D$;
\item 
defining $R_f$ as in \eqref{eq:rf}
and its contact set $\tilde\Aa$
as in \eqref{eq:btilde}, 
$\dom(R_f)\subset C=D\times \cY^*$ and
$\tilde \Aa$ provides a maximal extension of
$\Aa$ with domain $\dom(\tilde\Aa)\subset D.$
    \end{enumerate}
\end{theorem}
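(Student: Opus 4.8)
The plan is to verify the three assertions in order, reducing everything to a single separation estimate for the Fitzpatrick function. Throughout I fix a base point $(a_0,b_0)\in\Aa$ (if $\Aa=\emptyset$ there is nothing to prove), so that $a_0\in\dom(\Aa)\subseteq D$. For (1), convexity and lower semicontinuity of $f$ are immediate, since $\Fitz\Aa$ is convex and l.s.c.\ (being a conjugate) and $\indi_C$ is the indicator of the closed convex set $C=D\times\cY^*$; the bound $\Fitz\Aa\le f$ is trivial because $\indi_C\ge0$. The real content is the upper bound $f\le\Penot\Aa^\top$. Transposing the left inequality in \eqref{eq:1} gives $\Fitz\Aa\le\Penot\Aa^\top$ on all of $\cW$, so on $C$ we already have $f=\Fitz\Aa\le\Penot\Aa^\top$; outside $C$ we have $f=+\infty$, and it therefore suffices to prove that $\Penot\Aa^\top=+\infty$ off $C$, that is
\[
\dom(\Penot\Aa)\subseteq\cY^*\times D .
\]

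The key step, which will also settle (2), is the following claim: for every $x\notin D$ the partial conjugate of $v_0\mapsto\Fitz\Aa(a_0,v_0)$ blows up, i.e.
\[
\sup_{v_0\in\cY^*}\big\{\la v_0,x\ra-\Fitz\Aa(a_0,v_0)\big\}=+\infty .
\]
To prove it I would separate: since $D$ is closed and convex and $x\notin D$, Hahn--Banach yields $\xi\in\cY^*$ and $\alpha\in\R$ with $\la\xi,x\ra>\alpha\ge\la\xi,a'\ra$ for all $a'\in\dom(\Aa)$. Testing the supremum along the ray $v_0=b_0+t\xi$, $t\ge0$, and using the definition of $\Fitz\Aa$ together with the monotonicity inequality $\la b',a_0-a'\ra\le\la b_0,a_0-a'\ra$ valid for every $(a',b')\in\Aa$, one bounds
\[
\Fitz\Aa(a_0,b_0+t\xi)\le\la b_0,a_0\ra+t\,\sup_{a'\in\dom(\Aa)}\la\xi,a'\ra\le\la b_0,a_0\ra+t\alpha .
\]
Hence $\la b_0+t\xi,x\ra-\Fitz\Aa(a_0,b_0+t\xi)\ge\big(\la b_0,x\ra-\la b_0,a_0\ra\big)+t\big(\la\xi,x\ra-\alpha\big)\to+\infty$, which is the claim. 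I expect this estimate --- in particular the way the separating functional is combined with the monotonicity bound in order to tame $\Fitz\Aa$ along the ray $b_0+t\xi$ --- to be the only genuinely delicate point of the whole argument.

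From the claim both domain inclusions follow at once. Writing $\Penot\Aa(v,x)=\sup_{(x_0,v_0)\in\cW}\{\la v_0,x\ra+\la v,x_0\ra-\Fitz\Aa(x_0,v_0)\}$ and keeping only the single term $x_0=a_0$, the claim gives $\Penot\Aa(v,x)=+\infty$ for $x\notin D$, proving $\dom(\Penot\Aa)\subseteq\cY^*\times D$ and completing (1). For (2), the inclusion $\dom(f)\subseteq C$ is clear from $f=\Fitz\Aa+\indi_C$. Since $f=+\infty$ off $C$, one has $f^*(v,x)=\sup_{x_0\in D,\,v_0\in\cY^*}\{\la v_0,x\ra+\la v,x_0\ra-\Fitz\Aa(x_0,v_0)\}$, and again retaining only the admissible choice $x_0=a_0\in D$ the same claim yields $f^*(v,x)=+\infty$ for $x\notin D$, i.e.\ $\dom(f^*)\subseteq\cY^*\times D=\frs(C)$.

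Finally, (3) should be a soft consequence of (2) and Theorem \ref{thm:bau}. Because $\psi$ is everywhere finite, $R_f(x,v)<+\infty$ forces a splitting $(x,v)=\tfrac12(x_1+x_2,v_1+v_2)$ with $f(x_1,v_1)<+\infty$ and $f^*(v_2,x_2)<+\infty$; by (2) this gives $x_1,x_2\in D$, whence $x=\tfrac12(x_1+x_2)\in D$ by convexity of $D$, so $\dom(R_f)\subseteq C$. Theorem \ref{thm:bau}, applicable thanks to (1), then guarantees that $R_f$ is self-dual with $\Fitz\Aa\le R_f\le\Penot\Aa^\top$ and that its contact set $\tilde\Aa$ is a maximal monotone extension of $\Aa$. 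Since $\tilde\Aa\subseteq\dom(R_f)\subseteq C$ as a set of pairs, projecting onto the first component gives $\dom(\tilde\Aa)\subseteq D$, which completes the proof.
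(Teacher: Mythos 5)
Your proof is correct, and for claims (1) and (2) it takes a genuinely different route from the paper's. The paper obtains the upper bound $f\le\Penot\Aa^\top$ abstractly: since $\indi_C=0$ on $\Aa$ and $\Fitz\Aa^\top\le\sfc_\Aa$ by \eqref{eq:1}, one has $f^\top\le\sfc_\Aa$, and biconjugation (legitimate because $f$ is proper, convex and l.s.c.) gives $f^\top\le\sfc_\Aa^{**}=\Penot\Aa$; for (2) it invokes the Fenchel--Rockafellar theorem to represent $f^*$ as the l.s.c.\ envelope of the inf-convolution of $\Penot\Aa$ with $\indi_C^*=\indi_0+\sigma_D$, and then uses the one-line remark that $\dom(\Penot\Aa)\subset\cY^*\times D$ because $\Penot\Aa=\sfc_\Aa^{**}$ and $\sfc_\Aa=+\infty$ off $\frs(\Aa)$. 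You replace all of this by the single separation estimate $\sup_{v_0\in\cY^*}\{\la v_0,x\ra-\Fitz\Aa(a_0,v_0)\}=+\infty$ for $x\notin D$, proved by combining Hahn--Banach with the monotonicity inequality to get $\Fitz\Aa(a_0,b_0+t\xi)\le\la b_0,a_0\ra+t\alpha$ along the ray; your computation here is correct, and restricting the suprema defining $\Penot\Aa=\Fitz\Aa^*$ and $f^*$ to the slice $x_0=a_0$ then yields both $\dom(\Penot\Aa)\subset\cY^*\times D$ --- which, together with $\Fitz\Aa\le\Penot\Aa^\top$ from \eqref{eq:1}, gives (1) --- and $\dom(f^*)\subset\frs(C)$, i.e.\ (2). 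What each approach buys: yours is elementary and self-contained (no conjugate-of-a-sum duality theorem, no closure or inf-convolution bookkeeping, only the definition of the Fitzpatrick function and monotonicity of $\Aa$), whereas the paper's is shorter modulo the cited machinery and produces an explicit formula for $f^*$ rather than just a domain inclusion. Claim (3) is proved essentially identically in both: a point of $\dom(R_f)$ splits into a $\dom(f)$-point and a $\dom(f^*)$-point, (2) and convexity of $D$ place its first component in $D$, and Theorem \ref{thm:bau} together with the projection $\dom(\tilde\Aa)\subset\pi^{\cY}(\dom(R_f))$ concludes.
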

\begin{proof}
(1)
It is clear that $f$ is convex and lower semicontinuous and
$f \ge \Fitz\Aa$. 
On the other hand, $f^\top\le \sfc_\Aa$ by \eqref{eq:1} and 
since $\indi_C=0$ on $\Aa$.
It follows that $f^\top\le \sfc_\Aa^{**}=\Penot\Aa.$

\medskip\noindent
(2)
It is clear from the definition of $f$ that $f(x,v)=+\infty$ if $x \notin D$. Let us compute the conjugate $f^*$ of $f$: by Fenchel-Rockafellar duality theorem, (see e.g.~\cite[Theorem 16.4]{Rockafellar}) 
we have that
\[f^*=(\Fitz\Aa+\indi_C)^* = \cl(\Fitz\Aa^{*}\episum \indi^*_C)=
\cl(\Penot\Aa\episum \indi^*_C )\]
where $\cl$ denotes the 
lower semicontinuous envelope of 
a given function
(i.e.~$\cl(h)$ is 
the largest lower semincontinuous function staying below $h$)
and $\episum$ denotes the 
inf-convolution (or epigraphical sum) of two functions: if 
$k,j:\cW^* \to (-\infty, + \infty]$ then
$k\episum j$ is defined as
\[ (k\episum j)(z):=\inf_{z_1,z_2\in \cW^*,
z_1+z_2=z} k(z_1)+j(z_2), \quad 
z\in \cW^*.\]

Since $\indi_C(x,v)=\indi_D(x)$, 
we can easily compute 
its dual for every $z=(v,x)\in \cY^* \times \cY$
\begin{align*}
    \indi_C^*(v,x)&= \sup_{(x_0,v_0) \in \cY^* \times \cY} \la v,x_0 \ra + \la v_0,x\ra -\indi_D(x_0)  
    = \sup_{x_0\in D, v_0 \in \cY^*} 
    \la v_0,x \ra + \la v,x_0\ra\\
    &= \indi_0(x)+\sigma_D(v),
\end{align*}
where $\indi_0$ is the indicator function of the singleton $\{0\}$ and 
$\sigma_D$ is the support function of $D$ defined by 
\[ \sigma_D(v):= \sup_{x_0 \in D} \la v,x_0 \ra, \quad v \in \cY^*.\]
We thus have
\begin{align*}
    (\Penot \Aa \episum 
    \indi^*_C)(v,x) &= 
    \inf_{x_1+x_2=x,v_1+v_2=v} 
    \Penot\Aa(v_1,x_1)+\indi_0(x_2)
    +\sigma_D(v_2)
    \\
    &= \inf_{v_1+v_2=v} 
    \Penot\Aa(v_1,x)+\sigma_D(v_2).
\end{align*}
Since $\Penot \Aa=\sfc_\Aa^{**}$
and $\sfc_{\Aa}(v,x)=+\infty$ if $x\not\in D$,
we deduce that $\Penot\Aa(v,x)=+\infty$ if $x\not\in D$,
$\dom(\Penot\Aa \episum \indi^*_C)\subset 
\frs(C)= \cY^*\times D$, 
and therefore that $\dom(f^*)\subset 
\cl( \cY^*\times D)=\cY^* \times D$.

\medskip\noindent
(3)
    By the first claim 
    and Theorem \ref{thm:bau} with $f$ as in \eqref{eq:f0}, 
    we obtain that $\tilde{\Aa}$ is a maximal monotone extension of $\Aa$. 
    We only need to check that $\dom(\tilde{\Aa}) \subset D=\clconv{\dom(\Aa)}$. 
    Since it is clear from the definition
    of contact set 
    that $\dom(\tilde\Aa)\subset 
    \pi^{\cY}(\dom(R_f))$, it is sufficient to check 
    that $\dom(R_f)\subset C$.
    
    Let $(x,v) \in \dom(R_f)$, then by \eqref{eq:rf} we can find $x_1,x_2 \in \cY$ and $v_1,v_2 \in \cY^*$ such that $(x,v)=\frac{1}{2}(x_1+x_2,v_1+v_2)$ and 
    $(x_1,v_1)\in \dom(f)$,
    $(v_2,x_2)\in \dom (f^*)$:
    in particular $x_1,x_2$ belong to the convex set $D$ by (2) and
    therefore $x\in D$ as well.
\end{proof}

\subsection{Extension of monotone operators
invariant w.r.t.~the action
of a group of linear isomorphisms of \texorpdfstring{$\cY\times \cY^*$}{XxX}}
\label{subsec:invext}
We will now focus on operators which are invariant
with respect to a group $\G$ of 
bounded linear isomorphisms 
of $\cW$ 
of the form $\mathsf U=(U,U'):\cW \to \cW$.
For every $z=(x,v)\in \cW$ we thus have
\begin{equation}
    \mathsf U(z)=(Ux,U'v)
\end{equation}
and we assume that 
all the maps $\mathsf U\in \G$ 
satisfy the following properties
\begin{equation}
    \label{eq:inv-duality}
    \sfc^\top( \mathsf U z)=\sfc^\top(z),\quad
    \psi(\mathsf Uz)=\psi(z)
    \quad\text{for every }z\in \cW
\end{equation}
for a fixed self-dual function 
$\psi:\cW\to [0,+\infty)$ as in \eqref{eq:kernel}.
Notice that the first identity in 
\eqref{eq:inv-duality} implies
\begin{equation}
    \langle U'v,Ux\rangle=
    \langle v,x\rangle\quad\text{for every }
    (x,v)\in \cW
\end{equation}
so that $U^*\circ U'$ (resp.~$(U')^*\circ U$)
is the identity in $\cY^*$ (resp.~in $\cY$),
i.e.~$U'=(U^*)^{-1}=(U^{-1})^*$ is the 
transpose inverse of $U$.

\newcommand{\mtop}{{\kern1pt\text{-}\kern-1pt{\scriptscriptstyle \top}}}

Given $\mathsf U=(U,U') \in \G$, we define as usual
$\mathsf U^\top:=\frs \circ \mathsf U\circ \frs^{-1}=(U',U):\cW^*\to\cW^*$ 
observing that $\mathsf U^\top$ coincides with 
the inverse transpose of $\mathsf U$ with respect to 
 the duality paring between
 $z^*=(v^*,x^*)\in \cW^*$ and
 $z=(x,v)\in \cW$ given by 
 \begin{align*}
     \langle z^*,z\rangle&=
     \langle v^*,x\rangle+
     \langle v,x^*\rangle=
     \sfc(v^*,x)+\sfc^\top(x^*,v),
 \end{align*}
 since 
 \begin{align*}
     \langle \mathsf U^\top z^*,\mathsf Uz\rangle
     &=
     \langle (U'v^*,Ux^*),
     (Ux,U'v)\rangle=
     \langle U'v^*,Ux\rangle
     +\langle U'v,Ux^*\rangle
     =
     \langle v^*,x\rangle
     +\langle  v,x^*\rangle
     \\&=
     \langle z^*,z\rangle.
 \end{align*}
 In particular, we have the formula
 \begin{equation}
     \label{eq:crucial-inv-trans}
     \langle \mathsf U^\top z^*,z
     \rangle=\langle z^*,\mathsf U^{-1}z\rangle
     \quad\text{for every }
     z\in \cW,\ z^*\in \cW^*,\ \mathsf U\in \G.
 \end{equation}
\begin{definition}\label{def:Ginv}[$\G$-invariance]
We say that a set $\Aa \subset \cY \times \cY^*$ is $\G$-invariant if $\mathsf U \Aa\subset \Aa$
for every $\mathsf U\in \G$ 
(i.e.~$(Ux,U'v)\in \Aa$ for every $(x,v) \in \Aa$ and $(U,U')\in \G$). 
A function $g$ defined in $ \cY \times \cY^*$
(resp.~in $\cY^*\times \cY$) 
is said to be $\G$-invariant if $g
\circ \mathsf U=g$ (resp.~$g\circ 
\mathsf U^\top=g$) 
for every $\mathsf U\in \G$. A function $h: \cY \to \cY^*$ is $\G$-invariant if its graph is $\G$-invariant as a subset of $\cY \times \cY^*$.
\end{definition}

The following simple result 
clarifies 
the relation between $\G$-invariance of
monotone operators and 
$\G$-invariance of the corresponding Lagrangian functions.
\begin{proposition}\label{prop:fBginv}
\ 
\begin{enumerate}[\rm (1)]
    \item 
    If
    $f:\cW\to (-\infty,+\infty]$ is $\G$-invariant,
 then 
 $f^*$ is $\G$-invariant.
 \item If 
 $\Aa \subset \cY \times \cY^*$ is a $\G$-invariant monotone operator, 
 then the functions 
 $\sfc_\Aa, \Fitz\Aa, \Penot\Aa$ 
 defined in Section \ref{subsec:Fitz}
 are $\G$-invariant.
 \item 
 If $f:\cW\to (-\infty,+\infty]$ is a $\G$-invariant, 
 l.s.c.~and convex function
 satisfying $f\ge \sfc^\top$, 
 then the contact set $\Aa_f$ defined as 
 in \eqref{eq:contact} is $\G$-invariant.
 \item 
   If
    $f:\cW\to (-\infty,+\infty]$ is $\G$-invariant, then the kernel average $R_f$ defined
 as in \eqref{eq:rf} is $\G$-invariant as well.
\end{enumerate}
\end{proposition}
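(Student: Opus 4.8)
The plan is to deduce all four statements from the single change-of-variables identity \eqref{eq:crucial-inv-trans}, using that every $\mathsf U\in\G$ is a linear bijection of $\cW$ whose inverse is again in $\G$, and that $\mathsf U^\top$ is the corresponding bijection of $\cW^*$. First I would prove (1), which serves as the engine for the rest. Writing out $f^*(\mathsf U^\top z^*)=\sup_{z\in\cW}\big\{\la \mathsf U^\top z^*,z\ra-f(z)\big\}$ and inserting \eqref{eq:crucial-inv-trans} replaces the pairing by $\la z^*,\mathsf U^{-1}z\ra$; the substitution $w=\mathsf U^{-1}z$ is a bijection of $\cW$, and since $\mathsf U\in\G$ with $f$ $\G$-invariant we have $f(z)=f(\mathsf U w)=f(w)$, so the supremum is unchanged and equals $f^*(z^*)$. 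The very same computation, with the roles of $\cW$ and $\cW^*$ (hence of $\mathsf U$ and $\mathsf U^\top$) interchanged and using the identity $\la \mathsf U^\top z^*,\mathsf U z\ra=\la z^*,z\ra$ recorded just before \eqref{eq:crucial-inv-trans}, shows symmetrically that the conjugate of a $\G$-invariant function on $\cW^*$ is a $\G$-invariant function on $\cW$; both directions are needed below.

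For (2) the only genuinely new step is the $\G$-invariance of $\sfc_\Aa$ on $\cW^*$, which I would verify by hand. For $(v,x)\in\cW^*$ the point $\mathsf U^\top(v,x)=(U'v,Ux)$ equals $\frs(\mathsf U(x,v))$, and by $\G$-invariance of $\Aa$ one has $(x,v)\in\Aa$ iff $(Ux,U'v)\in\Aa$ (the ``only if'' uses $\mathsf U$, the ``if'' uses $\mathsf U^{-1}\in\G$); on $\Aa$ the value is preserved since $\la U'v,Ux\ra=\la v,x\ra$ by the duality identity in \eqref{eq:inv-duality}, while off $\Aa$ both values are $+\infty$. Hence $\sfc_\Aa$ is $\G$-invariant, and then $\Fitz\Aa=\sfc_\Aa^*$ and $\Penot\Aa=\Fitz\Aa^*$ are $\G$-invariant by two successive applications of the two conjugation statements from (1).

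Statement (3) is immediate: for $(x,v)\in\Aa_f$ and $\mathsf U\in\G$ one has $f(\mathsf U(x,v))=f(x,v)=\la v,x\ra=\sfc^\top(x,v)=\sfc^\top(\mathsf U(x,v))$ by $\G$-invariance of $f$ together with the first identity in \eqref{eq:inv-duality}, so $\mathsf U(x,v)\in\Aa_f$. For (4) I would change variables in the kernel average \eqref{eq:rf}: writing $z=(x,v)$ and $z_i=(x_i,v_i)$, the admissible decompositions of $\mathsf U z$ are exactly the images $w_i=\mathsf U z_i$ of the admissible decompositions of $z$, because $\mathsf U$ is linear and bijective. In each summand one checks $f(w_1)=f(z_1)$ by $\G$-invariance of $f$, then $\psi(w_1-w_2)=\psi(\mathsf U(z_1-z_2))=\psi(z_1-z_2)$ by linearity of $\mathsf U$ and \eqref{eq:inv-duality}, and finally, using $\frs\circ\mathsf U=\mathsf U^\top\circ\frs$ (which is merely the definition $\mathsf U^\top=\frs\circ\mathsf U\circ\frs^{-1}$), the term $f^*(\frs(w_2))=f^*(\mathsf U^\top\frs(z_2))=f^*(\frs(z_2))$ by the $\G$-invariance of $f^*$ proved in (1). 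Thus the minimized objective is unchanged under the substitution and $R_f(\mathsf U z)=R_f(z)$.

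All the computations are routine once the right change of variables is chosen; the one point that requires care, and which I regard as the main (and only modest) obstacle, is the consistent bookkeeping of the switch map $\frs$ against $\mathsf U^\top$ in (4) — that is, recognizing that the argument of $f^*$ lives on $\cW^*$ and is acted upon by $\mathsf U^\top$ rather than $\mathsf U$, so that invoking the $\G$-invariance of $f^*$ from (1) is exactly what closes the argument.
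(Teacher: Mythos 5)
Your proof is correct and follows essentially the same route as the paper's: conjugation invariance via the change of variables \eqref{eq:crucial-inv-trans}, direct verification for $\sfc_\Aa$ and the contact set, and invariance of the summands in \eqref{eq:rf} combined with the bijection of decompositions under $\mathsf U$. Your only departure is a welcome one of explicitness — you spell out both directions of the conjugation statement (from $\cW$ to $\cW^*$ and back), which the paper uses implicitly when passing from $\sfc_\Aa$ to $\Fitz\Aa=\sfc_\Aa^*$ in claim (2).
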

\begin{proof}
(1) 
We simply have for every $\mathsf U\in \G$
\begin{align*}
    f^*(\mathsf U^\top z^*)
    &=
    \sup_{z\in \cW}\left\{
    \langle 
    \mathsf U^\top z^*,z\rangle
    -f(z)\right\}=
    \sup_{z\in \cW}\left\{
    \langle z^*,\mathsf U^{-1} z\rangle
    -f(z)\right\}
    =
    \sup_{z\in \cW}\left\{
    \langle z^*,\mathsf U^{-1} z\rangle
    -f(\mathsf U^{-1} z)\right\}
    \\&=
    \sup_{\tilde z\in \cW}\left\{
    \langle z^*,\tilde z\rangle
    -f(\tilde z)\right\}
    =f^*(z^*)
\end{align*}
where we applied 
\eqref{eq:crucial-inv-trans}, 
the fact that $\mathsf U^{-1}\in \G$,
 the $\G$-invariance of $f$ and the
fact that $\{\tilde z=\mathsf U^{-1}z:z\in 
\cW\}=\cW.$

\noindent\medskip
(2) 
One immediately sees that $\sfc_\Aa$
is $\G$ invariant 
thanks to the $\G$-invariance of $\Aa$ and
\eqref{eq:inv-duality}.
The invariance of $\Fitz\Aa$
and of $\Penot\Aa$ then follows by the previous claim.

\noindent\medskip
(3) 
If $z=(x,v)\in \Aa_f$,
we know that $f(z)=\sfc^\top(z)$.
Since $f$ is $\G$-invariant, \eqref{eq:inv-duality} yields for every $\mathsf U\in \G$
\begin{equation*}
    f(\mathsf Uz)=
    f(z)=\sfc^\top(z)=
    \sfc^\top(\mathsf Uz)
\end{equation*}
so that $\mathsf Uz\in \Aa_f.$

\noindent\medskip
(4) 
We first observe that the function
\begin{equation*}
    P(z_1,z_2):=
    \frac 12 f(z_1)+
    \frac 12 f^*(\frs(z_2))+
    \frac 14 \psi(z_1-z_2)
\end{equation*}
satisfies
\begin{equation}
    \label{eq:invarianceP}
    P(\mathsf Uz_1,\mathsf Uz_2)=
    P(z_1,z_2),
\end{equation}
thanks to the invariance of $f$,
the invariance of $f^*$ from claim (1),
and the invariance property of 
$\psi$ stated in \eqref{eq:inv-duality}.

Since $\mathsf U$ is a linear isomorphism
we also have
\begin{equation*}
    z=\frac 12 z_1+\frac 12 z_2
    \quad\Leftrightarrow\quad
    \mathsf Uz=\frac 12 
    \mathsf Uz_1+\frac 12 \mathsf Uz_2.
\end{equation*}
Combining the above identities,
we immediately get $R_f(\mathsf Uz)=
R_f(z).$
\end{proof}

We can now obtain our main result.
\begin{theorem}[$\G$-invariant maximal monotone extensions]
\label{thm:graziebauinv} 
Let $\Aa \subset \cY \times \cY^*$ be a $\G$-invariant monotone operator
with $D:=\clconv{\dom(\Aa)}.$
Then the function $f$ given by \eqref{eq:f0}
is $\G$-invariant and, defining 
$R_f$ as in \eqref{eq:rf} and its  contact set
$\tilde{\Aa}$ as in \eqref{eq:btilde}, then  $\tilde \Aa$ is a $\G$-invariant maximal monotone extension of $\Aa$ 
with domain included in $\clconv{\dom(\Aa)}$.   
\end{theorem}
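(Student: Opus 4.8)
The plan is to notice that Theorem~\ref{thm:main1} already delivers everything except the invariance: for $f$ as in \eqref{eq:f0} the kernel average $R_f$ is self-dual and satisfies $\Fitz\Aa\le R_f\le \Penot\Aa^\top$, so its contact set $\tilde\Aa$ is a maximal monotone extension of $\Aa$ with $\dom(\tilde\Aa)\subset D=\clconv{\dom(\Aa)}$. Thus the only new task is to propagate $\G$-invariance along the chain $\Aa\mapsto\Fitz\Aa\mapsto f\mapsto R_f\mapsto\tilde\Aa$, for which Proposition~\ref{prop:fBginv} provides all the abstract tools.

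First I would check that $f$ is $\G$-invariant. Since $\Aa$ is $\G$-invariant and monotone, Proposition~\ref{prop:fBginv}(2) gives that $\Fitz\Aa$ is $\G$-invariant, so it remains to show that the indicator $\indi_C$ with $C=D\times\cY^*$ is $\G$-invariant. As $\indi_C(x,v)=\indi_D(x)$ and $\mathsf U(x,v)=(Ux,U'v)$, the identity $\indi_C\circ\mathsf U=\indi_C$ is equivalent to $U(D)=D$ for every $\mathsf U=(U,U')\in\G$.

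The heart of the argument is this equality $U(D)=D$, the single point where the structure of $\Aa$ enters. I would first observe that, because $\Aa$ is $\G$-invariant, $(x,v)\in\Aa$ forces $(Ux,U'v)\in\Aa$ and hence $Ux\in\dom(\Aa)$, so $U(\dom(\Aa))\subset\dom(\Aa)$; applying the same to $\mathsf U^{-1}\in\G$ (here the group structure is used) yields the reverse inclusion, whence $U(\dom(\Aa))=\dom(\Aa)$. Since $U$ is a bounded linear isomorphism with bounded inverse, it is a homeomorphism that commutes with taking convex hulls and closures, so $U(D)=\clconv{U(\dom(\Aa))}=\clconv{\dom(\Aa)}=D$. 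This gives the $\G$-invariance of $\indi_C$, and therefore of $f$.

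With $f$ shown to be $\G$-invariant, Proposition~\ref{prop:fBginv}(4) immediately yields that $R_f$ is $\G$-invariant. Moreover $R_f$ is self-dual, hence convex, l.s.c., and bounded below by $\sfc^\top$ (Theorem~\ref{thm:omnibus}(6)), so Proposition~\ref{prop:fBginv}(3) applies and shows that the contact set $\tilde\Aa=\Aa_{R_f}$ is $\G$-invariant. Combining this with the conclusions of Theorem~\ref{thm:main1} recalled above, $\tilde\Aa$ is the desired $\G$-invariant maximal monotone extension of $\Aa$ with $\dom(\tilde\Aa)\subset\clconv{\dom(\Aa)}$. I expect the only genuine obstacle to be the equality $U(D)=D$; everything else is an application of the already-established propositions, and the only subtlety there is the care needed in commuting $U$ with the closed convex hull, which is handled by the group property together with the fact that the strong and weak closures of a convex set coincide, making the choice of topology in $\clconv{\cdot}$ immaterial.
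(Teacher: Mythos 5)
Your proof is correct and follows essentially the same route as the paper's: invariance of $f$ via Proposition~\ref{prop:fBginv}(2) together with the invariance of $C=D\times\cY^*$, then Proposition~\ref{prop:fBginv}(4) and (3) (using Theorem~\ref{thm:omnibus}(6) to get $R_f\ge\sfc^\top$) for the invariance of $R_f$ and $\tilde\Aa$, and finally Theorem~\ref{thm:main1} for maximality, the extension property and the domain bound. The only cosmetic difference is in the treatment of $D$: the paper propagates the set-invariance of $\dom(\Aa)\times\cY^*$ through $\conv{\cdot}$ and $\cl(\cdot)$ using linearity and continuity of the elements of $\G$, whereas you first upgrade $U(\dom(\Aa))\subset\dom(\Aa)$ to an equality via the group property and then commute $U$ with $\clconv{\cdot}$ --- the two arguments are equivalent.
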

\begin{proof}
Let us first observe that 
$C':=\dom(\Aa)\times \cY^*$
is $\G$-invariant, thanks to the invariance of $\Aa.$ 
Since every element of $\G$
is linear, also 
$\conv{C'}$ is $\G$-invariant. 
Eventually, since every element 
of $\G$ is continuous, 
$C:=\cl(\conv{C'})$ is $\G$-invariant as well.

By claim (2) of Proposition
\ref{prop:fBginv}, 
we deduce that the function $f$ 
given by \eqref{eq:f0} is $\G$-invariant.
The invariance of $\tilde\Aa$ 
then follows by applying 
Claims (4) and (3) of 
Proposition
\ref{prop:fBginv}, recalling that $R_f\ge \sfc^\top$ by Theorem \ref{thm:omnibus}(6).
We conclude by Theorem \ref{thm:main1}.
\end{proof}

\subsection{Extension of invariant dissipative operators in Hilbert spaces}
\label{subsec:extension-diss}
 We now quickly apply 
 the results of the previous section
 \ref{subsec:invext} 
 to the particular case of 
 dissipative operators in Hilbert spaces.
 We adopt the dissipative viewpoint
 in view of applications to 
 differential equations, 
 but clearly all our statements
 apply to monotone operators as well. 
 The main reference is \cite{BrezisFR}.

 We consider a Hilbert space $\H$ with norm $|\cdot|$, scalar product $\scalprod{\cdot}{\cdot}$, and dual $\H^*$ which we
 identify with $\H$.
A multivalued  operator $\Bb\subset
\H\times \H$ is dissipative 
if the operator
\begin{equation}
    \label{eq:diss-mon}
    \Aa=-\Bb:=\Big\{(x,-v):(x,v)\in \Bb\Big\} 
\end{equation}
is monotone. 
More generally,
$\Bb$
is said to be \emph{$\lambda$-dissipative} ($\lambda \in \R$) if
\begin{equation}
  \label{eq:140}
  \la v-w,x-y\ra\le\lambda |x-y|^2\quad \text{for every }(x,v),\ (y,w)\in \Bb.
\end{equation}
\begin{remark}[$\lambda$-transformation]\label{rem:transff} 
Denoting by $\ii(\cdot)$ the identity function on $\H$,
it is easy to check that $\Bb$ is $\lambda$-dissipative if and only if $\Bb^{\lambda}:= \Bb-\lambda \ii$ is dissipative, or, equivalently,
$-\Bb^\lambda=\lambda\ii-\Bb$
is monotone.
Notice that $\dom(\Bb)=\dom(\Bb^\lambda)=\dom(-\Bb^\lambda)$.
\end{remark}

We say that a $\lambda$-dissipative operator $\Bb$ is \emph{maximal} if 
$\Bb$ is maximal 
w.r.t.~inclusion in the class of $\lambda$-dissipative operators or, equivalently, if
$-\Bb^\lambda$ is a maximal monotone operator.

If $\Bb\subset\H\times\H$ is a $\lambda$-dissipative operator, a \emph{maximal $\lambda$-dissipative extension} of $\Bb$ is any set $\boldsymbol{C} \subset \H \times \H$ such that $\Bb \subset \boldsymbol{C}$ and $\boldsymbol{C}$ is maximal $\lambda$-dissipative.

As an immediate application of Theorem \ref{thm:graziebauinv}, we 
obtain an important result for dissipative operators which are invariant with respect to 
the action of a group $\G$ of isometries.
 \begin{theorem}[Extension of invariant $\lambda$-dissipative operators]
 \label{thm:inv-max-ext}
 Let $\G_\H$ be a group of linear isometries
 of $\H$ and let $\G:=\{(U,U):U\in \G_\H\}$
 be the induced group of linear isometries
 in $\H\times \H$.
    Let $\Bb\subset \H\times \H$ be a $\lambda$-dissipative operator which is $\G$-invariant
    (as a subset of $\H\times \H$).
    Then there exists 
    a maximal $\lambda$-dissipative extension 
    $\hat \Bb$ of
    $\Bb$ 
    with $\dom(\hat\Bb)\subset 
    \clconv{\dom(\Bb)}$ 
    which is $\G$-invariant as well.
\end{theorem}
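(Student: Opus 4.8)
The plan is to reduce the statement to the $\G$-invariant maximal monotone extension result already established in Theorem \ref{thm:graziebauinv}, using the $\lambda$-transformation of Remark \ref{rem:transff} to pass back and forth between $\lambda$-dissipative and monotone operators. Concretely, I would introduce the associated monotone operator $\Aa := \lambda\ii - \Bb = -\Bb^\lambda$, which is monotone precisely because $\Bb$ is $\lambda$-dissipative. The crucial structural observation is that the order-reversing affine involution $L(x,v) := (x, \lambda x - v)$ on $\H \times \H$ satisfies $L = L^{-1}$, maps $\Bb$ onto $\Aa$ and $\Aa$ onto $\Bb$, and commutes with every $\mathsf U = (U,U) \in \G$: indeed, by linearity of $U$, $L(\mathsf U(x,v)) = (Ux, \lambda Ux - Uv) = \mathsf U(L(x,v))$. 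Consequently $\G$-invariance of $\Bb$ is equivalent to $\G$-invariance of $\Aa$, and the same equivalence will let me transport the extension back at the end.

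Before invoking Theorem \ref{thm:graziebauinv} I would check that the group $\G$ meets the hypotheses of the abstract framework, i.e.~the invariance conditions \eqref{eq:inv-duality} for a suitable self-dual kernel. Here $\cY = \cY^* = \H$, and since each $U \in \G_\H$ is a linear isometry of a Hilbert space it preserves the inner product (by polarization), so the duality pairing $\sfc^\top(x,v) = \la v, x\ra$ is $\G$-invariant. For the kernel I would take the quadratic self-dual function $\psi(x,v) := \tfrac12 |x|^2 + \tfrac12 |v|^2$, the $p = p^* = 2$ instance of \eqref{eq:ex-kernel}; its $\G$-invariance is immediate from $|Ux| = |x|$ and $|Uv| = |v|$. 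Thus \eqref{eq:inv-duality} holds and the machinery of Subsection \ref{subsec:invext} applies.

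I would then apply Theorem \ref{thm:graziebauinv} to the $\G$-invariant monotone operator $\Aa$: it produces a $\G$-invariant maximal monotone extension $\tilde\Aa \supset \Aa$ with $\dom(\tilde\Aa) \subset \clconv{\dom(\Aa)}$. Finally I transport back by setting $\hat\Bb := L(\tilde\Aa) = \lambda\ii - \tilde\Aa$. Since $-\hat\Bb^\lambda = \tilde\Aa$ is maximal monotone, $\hat\Bb$ is maximal $\lambda$-dissipative; since $\tilde\Aa \supset \Aa$ and $L$ is an involution, $\hat\Bb \supset \Bb$; and since $L$ commutes with $\G$, the $\G$-invariance of $\tilde\Aa$ yields the $\G$-invariance of $\hat\Bb$. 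The domain bound follows from Remark \ref{rem:transff}, which gives $\dom(\hat\Bb) = \dom(\tilde\Aa)$ and $\dom(\Aa) = \dom(\Bb)$, so that $\dom(\hat\Bb) \subset \clconv{\dom(\Aa)} = \clconv{\dom(\Bb)}$.

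The substance of the argument is entirely in Theorem \ref{thm:graziebauinv}; the only genuine point to verify is that the Hilbert-space isometry group $\G$ satisfies the abstract hypotheses \eqref{eq:inv-duality}, which is exactly the role played by the quadratic kernel $\psi$. Everything else is the routine bookkeeping of conjugating a known extension by the fixed involution $L$, and I expect no real obstacle there — the only care needed is to confirm that $L$ commutes with the diagonal action, which hinges precisely on the linearity of $U$ and on $\G$ acting as $(U,U)$.
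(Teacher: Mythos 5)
Your proposal is correct and follows essentially the same route as the paper: choose the quadratic self-dual kernel $\psi(x,v)=\tfrac12|x|^2+\tfrac12|v|^2$, verify \eqref{eq:inv-duality} for the diagonal isometry group, and apply Theorem \ref{thm:graziebauinv} to the $\G$-invariant monotone operator $\Aa=-\Bb^\lambda$. The only difference is that you spell out the bookkeeping the paper treats as immediate — the involution $L(x,v)=(x,\lambda x-v)$, its commutation with $\G$, and the domain identities under the $\lambda$-transformation — all of which check out.
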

\begin{proof} 
We can choose the self-dual kernel
\begin{equation*}
    \psi(x,v):=\frac 12 |x|^2+\frac 12 |v|^2
\end{equation*}
and we immediately get that 
$\G$ satisfies 
\eqref{eq:inv-duality}.
The statement is then an immediate
application of Theorem 
\ref{thm:graziebauinv}
to the $\G$-invariant monotone operator
$\Aa:=-\Bb^\lambda.$
\end{proof}
\begin{remark}
    If $\Bb$ is a $\G$-invariant  $\lambda$-dissipative operator
    which is maximal (with respect to inclusion) in the collection of 
    $\G$-invariant $\lambda$-dissipative operators, then $\Bb$ is maximal $\lambda$-dissipative.
\end{remark}

We now derive a few more properties concerning the resolvent, the Yosida regularization
and the minimal selection 
of a maximal $\lambda$-dissipative and $\G$-invariant operator $\Bb\subset \H\times \H$.
For references on the corresponding definitions and properties we refer to \cite{BrezisFR}, where the theory is developed in detail for the case $\lambda=0$. If $\lambda\ne0$ analogous statements can be obtained and we refer the interested reader to \cite[Appendix A]{CSS2grande}.

In the following we use the notation $\lambda^+:= \lambda \vee 0$ and we set $1/\lambda^+=+\infty$ if $\lambda^+=0$. 
We denote by $\Bb x\equiv \Bb(x):=\{v\in \H:(x,v)\in
\Bb\}$ the sections of $\Bb$, with $x\in\H$.

 Recall that 
 for every $0<\tau<1/\lambda^+$
 the resolvent
 $\resolvent\tau:=(\ii-\tau\Bb)^{-1}$ 
 of $\Bb$ is a $(1-\lambda \tau)^{-1}$-Lipschitz map defined on the whole $\H$ .
The minimal selection $\Bb^\circ:\dom(\Bb)\to\H$ of $\Bb$ is also characterized by
\begin{equation*}\Bb^\circ x=\displaystyle\lim_{\tau \downarrow 0}\frac{\resolvent\tau x-x}{\tau}.
\end{equation*}
The Yosida approximation of $\Bb$ is defined by $\Bb_\tau:=\frac{\resolvent\tau-\ii}{\tau}$.
For every $0<\tau<1/\lambda^+$, $\Bb_\tau$ is maximal $\frac{\lambda}{1-\lambda \tau}$-dissipative and $\frac{2-\lambda \tau}{\tau(1-\lambda \tau)}$-Lipschitz continuous. 

Moreover (cf. \cite[Prop. 2.6]{BrezisFR} or \cite[Appendix A]{CSS2grande}), the following hold
\begin{align}
\label{eq:btaucircD}
    &\text{if $x \in \dom(\Bb)$, $\,\,(1-\lambda \tau)|\Bb_\tau x| \uparrow |\Bb^\circ x|$, as $\tau \downarrow 0$,}
    \\
    \label{eq:btaucircN}
    &\text{if $x \notin \dom(\Bb)$, $\,\,|\Bb_\tau x| \to + \infty$, as $\tau \downarrow 0$.}
\end{align}

\medskip

Since $\Bb$ is a maximal $\lambda$-dissipative operator, there exists a semigroup of $e^{\lambda t}$-Lipschitz transformations $(\Sgp_t)_{t \ge
  0}$ with $\Sgp_t: \overline{\dom(\Bb)} \to
\overline{\dom(\Bb)}$ s.t.~for every $x_0\in \dom(\Bb)$
the curve $t \mapsto \Sgp_t x_0$ is included in $\dom(\Bb)$ and it
is the unique locally Lipschitz continuous solution of the differential inclusion
\begin{equation}\label{eq:lagrsystem}
\begin{cases}
  \dot{x}_t \in \Bb x_t \quad \text{ a.e. } t>0, \\
  x \restr{t=0} = x_0.
\end{cases}
\end{equation}

We also have
\begin{equation}\label{eq:paramev}
  \lim_{h\downarrow0}\frac{\Sgp_{t+h} x_0-\Sgp_t x_0}h =
  \Bb^\circ(\Sgp_tx_0),\quad \text{for every $x_0\in \dom(\Bb)$ and every $t\ge 0$}
\end{equation}
and
\begin{equation}\label{eq:expf}
    \Sgp_tx = \lim_{n \to + \infty} (\jJ_{t/n})^n x, \quad x \in \overline{\dom(\Bb)}, \, t \ge 0.
\end{equation}

\begin{proposition}[Invariance of resolvents, Yosida regularizations,
semigroups and minimal selections]
\label{prop:resdiscr}Let $\Bb \subset \H \times \H$ be
  a maximal $\lambda$-dissipative operator which is $\G$-invariant.
  Then for every $0<\tau<1/\lambda^+,\,t\ge 0$
  the operators $\resolvent \tau$, $\Bb_\tau$, $\Sgp_t$, $\Bb^\circ$ are
  $\G$-invariant. 
\end{proposition}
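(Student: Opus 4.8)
The plan is to reduce everything to a single observation—that each isometry $U\in\G_\H$ intertwines the defining relation of $\Bb$—and then to propagate the $\G$-invariance of $\Bb$ through the explicit constructions of $\resolvent\tau$, $\Bb_\tau$, $\Bb^\circ$, and $\Sgp_t$, using only the linearity and continuity of the maps $U$. First I would unwind what must be proved: since $\G=\{(U,U):U\in\G_\H\}$, Definition \ref{def:Ginv} says that a map $h:\H\to\H$ is $\G$-invariant exactly when its graph is stable under $(U,U)$, i.e.\ when $h(Ux)=Uh(x)$ for every $U\in\G_\H$ and $x\in\H$; in other words $h$ commutes with every $U\in\G_\H$. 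I would also record at the outset that $\dom(\Bb)$ is $U$-invariant (it is the projection onto the first factor of the $\G$-invariant graph of $\Bb$) and, since $U$ is a homeomorphism of $\H$, that $\overline{\dom(\Bb)}$ is $U$-invariant as well.

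The core step is the invariance of the resolvent. Fix $0<\tau<1/\lambda^+$, $y\in\H$, and set $x:=\resolvent\tau y$. By definition of $\resolvent\tau=(\ii-\tau\Bb)^{-1}$ there is $v\in\Bb x$ with $y=x-\tau v$. Applying a fixed $U\in\G_\H$ and using its linearity gives $Uy=Ux-\tau\,Uv$, while the $\G$-invariance of $\Bb$ yields $(Ux,Uv)\in\Bb$, that is $Uv\in\Bb(Ux)$. Hence $Ux$ solves $Uy\in Ux-\tau\Bb(Ux)$, and since $\resolvent\tau$ is single-valued and defined on all of $\H$, this forces $\resolvent\tau(Uy)=Ux=U\resolvent\tau(y)$. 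This is precisely the $\G$-invariance of $\resolvent\tau$.

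The remaining assertions then follow formally. For the Yosida approximation, linearity of $U$ gives $\Bb_\tau(Ux)=\tau^{-1}\big(\resolvent\tau(Ux)-Ux\big)=U\,\tau^{-1}\big(\resolvent\tau x-x\big)=U\Bb_\tau x$. For the minimal selection, for $x\in\dom(\Bb)$ the limit $\Bb^\circ x=\lim_{\tau\downarrow0}\Bb_\tau x$ exists by \eqref{eq:btaucircD}; since each $\Bb_\tau$ commutes with $U$, and $Ux\in\dom(\Bb)$ by domain invariance, the continuity of $U$ lets me pass to the limit to get $\Bb^\circ(Ux)=U\Bb^\circ x$. Finally, for the semigroup I would invoke the exponential formula \eqref{eq:expf}: each factor $\jJ_{t/n}=\resolvent{t/n}$ commutes with $U$, hence so does the $n$-fold composition $(\jJ_{t/n})^n$, and letting $n\to\infty$ with $U$ continuous yields $\Sgp_t(Ux)=U\Sgp_t x$ for every $x\in\overline{\dom(\Bb)}$.

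I do not anticipate a genuine obstacle: the whole argument is the remark that $U$ preserves the relation $(x,v)\in\Bb$ and that each of $\resolvent\tau$, $\Bb_\tau$, $\Bb^\circ$, $\Sgp_t$ is built from $\Bb$ by operations—inversion of $\ii-\tau\Bb$, affine combinations, composition, and pointwise limits—that commute with a continuous linear bijection. The only two points requiring a little care are the single-valuedness and everywhere-definedness of $\resolvent\tau$ (needed to upgrade the inclusion in the resolvent step to an equality) and the $U$-invariance of $\dom(\Bb)$ and $\overline{\dom(\Bb)}$, which guarantees that $Ux$ lands in the correct set whenever $x$ does.
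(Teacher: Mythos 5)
Your proposal is correct and follows essentially the same route as the paper's proof: the resolvent identity $\resolvent\tau(Uy)=U\resolvent\tau y$ via the $\G$-invariance of $\Bb$ and the uniqueness (single-valuedness) of the resolvent, the Yosida approximation and minimal selection by linearity and passage to the limit, and the semigroup via the exponential formula \eqref{eq:expf}. The only nitpick is that the existence of the limit defining $\Bb^\circ x$ is given by the characterization $\Bb^\circ x=\lim_{\tau\downarrow0}\tau^{-1}(\resolvent\tau x-x)$ stated in the paper, rather than by \eqref{eq:btaucircD}, which only concerns the monotone convergence of the norms.
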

\begin{proof} The identities $\resolvent\tau (Ux)=U(\resolvent\tau x)$ and
  $\Bb^\circ (Ux) = U(\Bb^\circ x)$ come from the
  $\G$-invariance of $\Bb$ and the uniqueness
  property of the resolvent operator.
  The exponential formula (cf.\eqref{eq:expf})
  \begin{equation*}
    \Sgp_t x=\lim_{n\to\infty} (\resolvent{t/n})^n(x)
  \end{equation*}
  yields the $\G$-invariance of $\Sgp_t$.
\end{proof}

\subsection{Extension of invariant Lipschitz maps in Hilbert spaces }
\label{subsec:invariant-Lip}

We conclude this general discussion
concerning $\G$-invariant sets and maps
addressing the problem of 
the global extension 
of a $\G$-invariant Lipschitz map 
defined in a subset of a 
separable Hilbert space $\H.$

As in Theorem \ref{thm:inv-max-ext},
we consider a group $\G_\H$ of isometric isomorphisms of $\H$ inducing
 the group $\G:=\{(U,U):U\in \G_\H\}$
 in $\H\times \H.$

In addition to Definition \ref{def:Ginv}, we also give the following definition.

\begin{definition}\label{def:Ginvmap}[$\G_\H$-invariance for $\H$-valued maps]
We say that a function $f:D\to \H$, where $D\subset \H$, is \emph{$\G_\H$-invariant} if 
its graph is $\G$-invariant:
\begin{equation}
\label{eq:G-invariance-map}
    \text{for every $x\in D$ 
we have $Ux\in D$ and $f(Ux)=Uf(x)$, for every $U\in\G_\H$.}
\end{equation}
\end{definition}

The Kirszbraun-Valentine Theorem 
\cite{Kirszbraun34,Valentine45}
states that 
every Lipschitz function $f:D\to \H$
defined in a subset $D$ of $\H$
with Lipschitz constant $L\ge0$
admits a Lipschitz extension $F:\H\to \H$
whose Lipschitz constant coincides with $L$. 
We want to prove that it is possibile to find such an extension preserving $\G$-invariance.

Our starting point 
is the following well known fact, going back to Minty (see also 
\cite{Alberti-Ambrosio99}).
We introduce the 
isometric Cayley transforms
$T, T^{-1}:\H\times \H\to \H\times \H$
\begin{equation}
    \label{eq:minty-rotations}
    T(y,w):=\frac 1{\sqrt 2}(y-w,y+w),\quad
    T^{-1}(x,v):=
    \frac 1{\sqrt 2}(x+v,-x+v)
\end{equation}
\begin{lemma}[Lipschitz and monotone graphs]
    \label{le:Minty-rotation}
    Let $\boldsymbol F,\Aa$
    be two subsets of $\H\times \H$
    such that $\Aa=T(\boldsymbol F)$.
    The following two properties are equivalent:
    \begin{enumerate}[\rm (1)]
        \item 
        $\boldsymbol F$
        is the graph of 
        a nonexpansive map 
    $f$ 
    defined on the set $D$ given by
    \[D:=\pi^1(\boldsymbol F)
    =\big\{y\in\H:(y,w)\in\boldsymbol F
    \text{ for some }w\in \H\big\};\]
    \item 
    $\Aa$ is monotone.
\end{enumerate}
Moreover, assuming that $\Aa$ is monotone, the following hold
\begin{enumerate}[(i)]
    \item 
$\Aa$ is maximal monotone
if and only if $D=\H.$
\item 
$\Aa$ is $\G$-invariant if and
only if $\boldsymbol F$
is $\G$-invariant
(or, equivalently, $f$ is $\G_\H$-invariant).
\item 
If $\Aa'$ is a monotone extension
of $\Aa$ 
and $f':D'\to \H$ 
is the nonexpansive map 
associated with $ \boldsymbol F':=T^{-1}(\Aa')$,
then $D':=\pi^1(\boldsymbol F')\supset D$
and $f'$ is an extension of $f.$
\end{enumerate}
\end{lemma}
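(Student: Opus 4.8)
The plan is to funnel the whole statement through a single polarization identity and then let each item fall out of it. Writing $(x_i,v_i):=T(y_i,w_i)$ for two pairs $(y_i,w_i)\in\boldsymbol F$, $i=1,2$, a one-line computation from \eqref{eq:minty-rotations} gives
\[
\scalprod{v_1-v_2}{x_1-x_2}=\tfrac12\big(|y_1-y_2|^2-|w_1-w_2|^2\big).
\]
Since $T$ is a bijection with $\Aa=T(\boldsymbol F)$, every pair in $\Aa$ arises this way, so the sign and size information encoded here is exactly what the lemma asks about.

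For the equivalence of (1) and (2) I would argue directly from this identity. Monotonicity of $\Aa$ means the left-hand side is nonnegative for all admissible pairs, i.e.\ $|w_1-w_2|\le|y_1-y_2|$ for all $(y_i,w_i)\in\boldsymbol F$. Specializing to $y_1=y_2$ forces $w_1=w_2$, so $\boldsymbol F$ is the graph of a single-valued map $f$ on $D=\pi^1(\boldsymbol F)$, and the same inequality says precisely that $f$ is nonexpansive; conversely, if $\boldsymbol F$ is the graph of a nonexpansive $f$ the inequality holds and hence $\Aa$ is monotone.

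Items (ii) and (iii) are then formal. For (ii) I would use that $T$ is linear and commutes with each $\mathsf U=(U,U)\in\G$: because $U$ is linear, $T(Uy,Uw)=\mathsf U\,T(y,w)$, and likewise for $T^{-1}$. Hence $\mathsf U\Aa=T(\mathsf U\boldsymbol F)$, so $\Aa$ is $\G$-invariant iff $\boldsymbol F$ is, the remaining equivalence with $\G_\H$-invariance of $f$ being Definition \ref{def:Ginvmap}. For (iii), a monotone extension $\Aa'\supset\Aa$ pulls back to $\boldsymbol F':=T^{-1}(\Aa')\supset\boldsymbol F$; applying the implication (2)$\Rightarrow$(1) to $\Aa'$ shows $\boldsymbol F'$ is the graph of a nonexpansive $f'$ on $D':=\pi^1(\boldsymbol F')$, and $\boldsymbol F\subset\boldsymbol F'$ gives $D\subset D'$ and $f'|_D=f$.

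Part (i) is where the real content sits, and I expect it to be the main obstacle. Reading off the first component of $T^{-1}$ from \eqref{eq:minty-rotations}, one has $D=\pi^1\big(T^{-1}(\Aa)\big)=\big\{\tfrac1{\sqrt2}(x+v):(x,v)\in\Aa\big\}=\tfrac1{\sqrt2}\operatorname{Range}(\ii+\Aa)$. As scaling by $1/\sqrt2$ is a bijection of $\H$, the condition $D=\H$ is equivalent to surjectivity of $\ii+\Aa$, and by the classical Minty theorem this surjectivity is exactly maximality of a monotone operator in a Hilbert space, giving (i). The obstacle is not length but the need for genuine external input: Minty's surjectivity criterion must be the tool here, rather than any Lipschitz-extension result, so that the lemma stays available to the constructive Kirszbraun--Valentine argument it is meant to support. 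A self-contained route for \emph{maximal $\Rightarrow D=\H$} would extend $f$ nonexpansively by one point outside $D$ and contradict maximality through (iii), but this smuggles in a one-point Kirszbraun step and is therefore better avoided.
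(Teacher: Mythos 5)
Your proof is correct and follows essentially the same route as the paper's: the polarization identity you write is exactly the paper's computation \eqref{eq:diss1}--\eqref{eq:diss3} in condensed form, item (i) is proved in the paper by the very same identification of $D$ with the (rescaled) range of $\ii+\Aa$ together with Minty's surjectivity criterion (cited there as \cite[Proposition 2.2(ii)]{BrezisFR}), and your arguments for (ii) and (iii) via $T\circ\mathsf U=\mathsf U\circ T$ and graph inclusion coincide with the paper's. The only cosmetic difference is that you make the single-valuedness step (taking $y_1=y_2$) explicit, which the paper leaves implicit.
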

\begin{proof}
    Let us take
    a pair of elements
    $(x_i,v_i)=T(y_i,w_i)\in \Aa$, $i=1,2$;
    we have
    $y_i=\frac 1{\sqrt 2}
    (x_i+v_i)$, 
    $w_i=\frac 1{\sqrt 2}(-x_i+v_i)$
    so that 
    \begin{align} \label{eq:diss1}
        2|y_1-y_2|^2=
        |x_1+v_1-(x_2+v_2)|^2 
        &=
        |x_1-x_2|^2 
        +
        |v_1-v_2|^2 
        +2\langle x_1-x_2,v_1-v_2\rangle,
        \\ \label{eq:diss2}
        2|w_1-w_2|^2 
        =|-x_1+v_1-(-x_2+v_2)|^2 
        &=
        |x_1-x_2|^2 
        +
        |v_1-v_2|^2 
        -2\langle x_1-x_2,v_1-v_2\rangle,
    \end{align}
    and then
    \begin{equation} \label{eq:diss3}
      |w_1-w_2|^2
      \le 
      |y_1-y_2|^2
      \quad
      \Leftrightarrow
      \quad
      \langle x_1-x_2,v_1-v_2\rangle
      \ge 0.
    \end{equation}
    This proves the first statement.
    
    Concerning (i), 
    it is sufficient
    to recall that 
    the domain $D$ of $f$
    coincides with the image 
    of $h(\Aa)$
    where $h(x,v):=\frac 1{\sqrt 2}(x+v)$
    and we know that $\Aa$
    is maximal monotone if and only if
    such an image coincides with $\H$ (cf.~\cite[Proposition 2.2 (ii)]{BrezisFR}).

    The second property (ii) 
    is an immediate consequence
    of the invertibility of $T$ 
    and of the linearity of 
    the transformations of $\G$
    so that 
    for every $\mathsf U=(U,U)\in \G$
    we have
    $T\circ \mathsf U=
    \mathsf U\circ T$.

    Let us eventually consider claim (iii): 
    we clearly have $\boldsymbol F'=
    T^{-1}(\Aa')\supset T^{-1}(\Aa) = \boldsymbol F$
    and therefore $D'=\pi^1(\boldsymbol F')\supset \pi^1(\boldsymbol F)=D$.
    On the other hand,
    since both $\boldsymbol F'$
    and $\boldsymbol F$
    are the graph of a nonexpansive map,
    $\boldsymbol F'\cap
    (D\times \H)=
    \boldsymbol F\cap (D\times \H)$
    and therefore 
    the restriction of $f'$
    to $D$ coincides with $f$.
\end{proof}

We can now 
state our result concerning the extension of $\G$-invariant
Lipschitz maps.
\begin{theorem}[Extension of $\G$-invariant Lipschitz maps]
    \label{thm:inv-lip-ext}
    Let us suppose that $f:D\to\H$ is
    $L$-Lipschitz and $\G_\H$-invariant
    according to \eqref{eq:G-invariance-map}.
    Then 
    there exists a $L$-Lipschitz map 
    $\hat f:\H\to\H$ 
    extending $f$ which is $\G_\H$-invariant 
    as well.
\end{theorem}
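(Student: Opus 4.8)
The plan is to reduce the Lipschitz extension problem to the monotone extension result already established, via the Cayley transform $T$ from \eqref{eq:minty-rotations}. The key structural fact is Lemma \ref{le:Minty-rotation}: graphs of nonexpansive maps correspond bijectively to monotone sets through $T$, and this correspondence respects both $\G$-invariance and the extension partial order. Since Theorem \ref{thm:inv-lip-ext} is stated for general Lipschitz constant $L$ but the machinery is cleanest for nonexpansive ($L=1$) maps, the first step is a rescaling reduction.

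First I would reduce to the case $L=1$. If $L=0$ the map $f$ is constant on each orbit and the extension is trivial (take the constant value, which is $\G_\H$-invariant since $f$ is). If $L>0$, I set $g:=L^{-1}f$, which is $1$-Lipschitz (nonexpansive) on $D$ and still $\G_\H$-invariant, because scaling by $L^{-1}$ commutes with the linear isometries $U\in\G_\H$: indeed $g(Ux)=L^{-1}f(Ux)=L^{-1}Uf(x)=Ug(x)$. Once I produce a $\G_\H$-invariant nonexpansive extension $\hat g:\H\to\H$, the map $\hat f:=L\hat g$ is $L$-Lipschitz, $\G_\H$-invariant by the same scaling argument, and extends $f$.

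Next I would transport the nonexpansive map $g$ to the monotone world. Let $\boldsymbol F$ be the graph of $g$ and set $\Aa:=T(\boldsymbol F)$. By Lemma \ref{le:Minty-rotation}, $\Aa$ is monotone, and by part (ii) of that lemma $\Aa$ is $\G$-invariant because $g$ is $\G_\H$-invariant. Now I apply Theorem \ref{thm:graziebauinv} to the $\G$-invariant monotone operator $\Aa$: it furnishes a $\G$-invariant maximal monotone extension $\tilde\Aa\supset\Aa$. Pulling back, I define $\boldsymbol F':=T^{-1}(\tilde\Aa)$ and let $\hat g:D'\to\H$ be the associated nonexpansive map, where $D':=\pi^1(\boldsymbol F')$. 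By Lemma \ref{le:Minty-rotation}(i), maximality of $\tilde\Aa$ forces $D'=\H$, so $\hat g$ is defined on all of $\H$; by part (iii) it extends $g$; and by part (ii) applied to $\tilde\Aa$, the $\G$-invariance of $\tilde\Aa$ yields that $\hat g$ is $\G_\H$-invariant. This closes the argument after the rescaling back to $\hat f=L\hat g$.

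The main obstacle is purely bookkeeping: verifying that the group $\G=\{(U,U):U\in\G_\H\}$ genuinely satisfies the hypotheses required by Theorem \ref{thm:graziebauinv}, namely the invariance conditions \eqref{eq:inv-duality} for the self-dual kernel. Here, exactly as in the proof of Theorem \ref{thm:inv-max-ext}, I would choose $\psi(x,v):=\tfrac12|x|^2+\tfrac12|v|^2$; since each $U$ is a linear isometry of $\H$, both the pairing $\sfc^\top(x,v)=\langle v,x\rangle$ and $\psi$ are preserved under $(x,v)\mapsto(Ux,Uv)$, so \eqref{eq:inv-duality} holds and Theorem \ref{thm:graziebauinv} applies. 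The only genuine subtlety is confirming that nonexpansiveness (rather than mere monotonicity) survives the pullback, but this is precisely the content of the equivalence in \eqref{eq:diss3}, so no new estimate is needed.
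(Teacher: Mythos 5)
Your proposal is correct and follows essentially the same route as the paper: rescale to the nonexpansive case, pass to a monotone $\G$-invariant operator via the Cayley transform and Lemma \ref{le:Minty-rotation}, extend it by the invariant maximal monotone extension result (your direct use of Theorem \ref{thm:graziebauinv} with the kernel $\psi(x,v)=\tfrac12|x|^2+\tfrac12|v|^2$ is exactly what the paper's cited Theorem \ref{thm:inv-max-ext} packages), and pull back through $T^{-1}$ using claims (i)--(iii) of the lemma. The only (harmless) difference is your explicit treatment of the degenerate case $L=0$, which the paper subsumes in the rescaling remark.
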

\begin{proof} 
    Up to a rescaling,
    it is not restrictive to assume 
    that $L=1$ so that $f$ is nonexpansive.
    
    Let $\boldsymbol F\subset \H\times \H$
    be the graph of $f$
    and let
    $\Aa:=T(\boldsymbol F)$.
    By Lemma \ref{le:Minty-rotation}
    we know that 
    $\Aa$ is a monotone 
    $\G$-invariant operator
    
    We can now apply Theorem
    \ref{thm:inv-max-ext}
    to find a maximal monotone
    extension $\hAa$ of 
    $\Aa$ which is still $\G$-invariant.

    Setting $\boldsymbol {\hat F}:=
    T^{-1}(\hAa)$
    we can eventually apply 
    Lemma \ref{le:Minty-rotation}
    to obtain that
    $\boldsymbol {\hat F}$
    is the graph of a nonexpansive
    map $\hat f:\hat D\to \H.$
    Moreover, Claim (i)
    shows that $\hat D=\H$
    so that $\hat f$ is globally defined,
    Claim (ii) shows that $\hat f$
    is $\G_\H$-invariant,
    and Claim (iii) 
    ensures that $\hat f$ is
    an extension of $f$.
    \end{proof}

\section{Borel partitions and almost optimal couplings}\label{sec:Borel}
In this section, we collect  some useful results concerning Borel isomorphisms and partitions of standard Borel spaces. These results, besides being interesting by themselves, will also turn out to be useful in Section \ref{subsec:invariant-maps}, where we will deal with a particular group of isometric isomorphisms on Banach spaces of $L^p$-type.

We start by fixing the fundamental definitions and notations involved in the statements of the main theorems of this Section, which are presented in Section \ref{sec:laseconda}. These results concern the approximation of arbitrary couplings between probability measures by couplings which are concentrated on maps, through the action of measure-preserving transformations.
In particular, Corollary \ref{cor:marc} concerns the approximation of bistochastic measures by the graph of measure-preserving maps and it is written here in the general context of a standard Borel space $(\Omega, \cB)$ endowed with a nonatomic probability measure $\P$. 
This result relies on the analogous 
property stated for 
the $d$-dimensional Lebesgue measure in \cite[Theorem 1.1]{Brenier-Gangbo03}. In the same spirit of Corollary \ref{cor:marc}, Corollary \ref{cor:from-gangbo} provides an approximation result for the law of a pair of measurable random variables defined on $(\Omega, \cB,\P)$ with values in a pair of separable Banach spaces. A consequence of this result is the key lemma \cite[Lemma 6.4]{carda} (cf. also \cite[Lemma 5.23 p. 379]{CD18}), which states that if $X$ and $Y$ are random variables with the same law, then $X$ can be approximated by $Y$ through the action of a sequence of measure-preserving transformations.
Finally, we reported a fundamental result in Optimal Transportation Theory, concerning the equivalence between the Monge and the Kantorovich formulations  (see \cite[Theorem B]{pra}). This is the content of Proposition \ref{prop:pratelli} where the result is written using the language of random variables (cf. also \cite[Lemma 3.13]{gangbotudo}), so as to be easily recalled in Section \ref{subsec:invariant-maps}.

 In order to introduce all the technical tools used to state and prove all these properties,
in Section \ref{sec:laprima} we list some well-known facts about standard Borel spaces.

\medskip

\begin{definition}[Standard Borel spaces and nonatomic measures]
\label{def:sbs}
A \emph{standard Borel space} $(\Omega, \cB)$ is a measurable space that is isomorphic (as a measure space) to a Polish space. Equivalently, there exists a Polish topology $\tau$ on $\Omega$ such that the Borel sigma algebra generated by $\tau$ coincides with $\cB$. We say that a positive finite measure $\mm$ on $(\Omega, \cB)$ is nonatomic (also called atomless or diffuse) if $\mm(\{\omega\}) = 0$ for every $\omega \in \Omega$ (notice that $\{\omega\} \in \cB$ since it is compact in any Polish topology on $\Omega$).
\end{definition}
We notice that, being $(\Omega,\cB)$ standard Borel, 
$\mm$ is nonatomic if and only if 
for every $B\in \cB$ with $\mm(B)>0$ there exists $B'\in \cB,\ B'\subset B$,
such that $0<\mm(B')<\mm(B).$

\begin{definition}[Partitions]
If $(\Omega, \cB)$ is a standard Borel space and $N \in \N$, a family of subsets $\mathfrak P_N=\{\Omega_{N,k}\}_{k \in I_N} \subset \cB$, where $I_N:=\{0, \dots, N-1\}$, is called a \emph{$N$-partition of $(\Omega, \cB)$} if
\[\bigcup_{k\in I_N}\Omega_{N,k}=\Omega, \quad \Omega_{N,k}\cap \Omega_{N,h}=\emptyset \text{ if }h,k\in I_N,\, h\neq k.\]
\end{definition}

If $(\Omega, \cB)$ is a standard Borel space endowed with a nonatomic, positive finite measure $\mm$, we denote by $\rmS(\Omega, \cB, \mm)$ the class of
$\cB$-$\cB$-measurable maps $g:\Omega\to\Omega$ which are
essentially injective and measure-preserving, meaning that there exists a full $\mm$-measure set $\Omega_0 \in \cB$ such that $g$ is injective on $\Omega_0$ and $g_\sharp \mm=\mm$, where $g_\sharp \mm$ is the \emph{push forward of $\mm$ through $g$}. We recall that, if $X$ and $Y$ are Polish spaces, $f: X \to Y$ a Borel map and $\mu$ is a nonnegative and finite measure on $X$, then $f_\sharp\mu$ is defined by
\begin{equation}\label{eq:pushf}
\int_{Y} \varphi \de (f_{\sharp}\mu) = \int_X \varphi \circ f \de \mu 
\end{equation}
for every $\varphi:Y\to\R$ bounded (or nonnegative) Borel function.

If $\mathcal{A} \subset \cB$ is a sigma algebra on $\Omega$ we denote by $\rmS(\Omega, \cB, \mm; \mathcal{A})$ the subset of $\rmS(\Omega, \cB, \mm)$ of $\mathcal{A}-\mathcal{A}$ measurable maps. Finally $\symg{I_N}$ denotes the set of permutations of $I_N$ i.e.~bijective maps $\sigma: I_N \to I_N$.
\medskip

We consider the partial order on $\N$ given by
 \begin{equation}
   \text{$m\prec
 n\quad\Leftrightarrow\quad m\mid n$}\label{eq:166},
\end{equation}
where $m\mid n$ means that $n/m\in \N$. We write $m \precneq n$ if $m \prec n$ and $m \ne n$.

\begin{definition}[Segmentations]
\label{def:segm} Let $(\Omega, \cB)$ be a standard Borel space endowed with a nonatomic, positive finite measure $\mm$ and let $\cN \subset \N$ be an unbounded directed set w.r.t.~$\prec$. We say that a collection of partitions $(\mathfrak P_N)_{N \in \cN}$ of $\Omega$, with corresponding sigma algebras $\cB_N:= \sigma(\mathfrak P_N)$, is a \emph{$\cN$-segmentation of $(\Omega, \cB, \mm)$} if 
\begin{enumerate}
\item $\mathfrak P_N= \{\Omega_{N,k}\}_{k \in I_N}$ is a $N$-partition of $(\Omega, \cB)$ for every $N \in \cN$,
\item $\mm(\Omega_{N,k})=\mm(\Omega)/N$ for every $k\in I_N$ and every $N \in \cN$,
\item if $M\mid N=KM$ then
  $\bigcup_{k=0}^{K-1}\Omega_{N,mK+k}=\Omega_{M,m}$, $m\in
  I_{M}$,
\item $\sigma \left ( \left \{ \cB_N \mid N \in \cN \right \} \right) = \cB$.
\end{enumerate} 
In this case we call $(\Omega, \cB, \mm, (\mathfrak P_N)_{N \in \cN})$ a \emph{$\cN$-refined standard Borel measure space}.
\end{definition}

\begin{remark}
\label{rem:divideetimpera}
It is clear that $\cB_M \subset \cB_N$ if and only if 
$M \mid N$.
\end{remark}

\begin{example}\label{ex:canon} The canonical example of $\cN$-refined standard Borel measure space is 
\[ ([0,1), \mathcal{B}([0,1)), \lambda^c, (\mathfrak {I}_N)_{N \in \cN}),\]
where $\lambda^c$ is the one-dimensional Lebesgue measure restricted to $[0,1)$ and weighted by a constant $c>0$ and $\mathfrak {I}_N=(I_{N,k})_{k \in I_N}$ with $I_{N,k}:=[k/N,(k+1)/N)$, $k \in I_N$ and $N \in \cN$. 
\end{example}

\subsection{ Technical tools on standard Borel spaces and measure-preserving isomorphisms}\label{sec:laprima}

We start with the following fundamental result that follows by e.g. \cite[Theorem 9, Chapter 15]{royden}. 

\begin{theorem}[Isomorphisms of standard Borel spaces]
\label{theo:kura}
Let $(\Omega, \cB)$ and $(\Omega', \cB',)$ be standard Borel spaces endowed with nonatomic, positive finite measures $\mm$ and $\mm'$ respectively, such that $\mm(\Omega) = \mm'(\Omega')$. Then there exist two measurable functions $\varphi: \Omega \to \Omega'$ and $\psi: \Omega'\to \Omega$ such that 
\begin{equation}\label{eq:kura}
 \psi \circ \varphi = \ii_{\Omega} \text{ $\mm$-a.e.~in $\Omega$}, \quad \varphi \circ \psi = \ii_{\Omega'} \text{ $\mm'$-a.e.~in $\Omega'$}, \quad \varphi_\sharp \mm = \mm', \quad \psi_\sharp \mm'=\mm.  
\end{equation}
\end{theorem}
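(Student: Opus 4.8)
The plan is to reduce both spaces to a common canonical model and then compose. Set $c := \mm(\Omega) = \mm'(\Omega')$ and let $\leb$ denote the one-dimensional Lebesgue measure restricted to the interval $[0,c)$, which is a nonatomic finite measure of total mass $c$. Since $(\Omega,\cB)$ is standard Borel and $\mm$ is nonatomic, finite and positive, the cited isomorphism theorem \cite[Theorem 9, Chapter 15]{royden} provides measurable maps
\[\varphi_1:\Omega\to[0,c),\qquad \psi_1:[0,c)\to\Omega\]
that are mutually inverse almost everywhere and transport the measures onto each other:
\[\psi_1\circ\varphi_1=\ii_\Omega\ \text{$\mm$-a.e.},\quad \varphi_1\circ\psi_1=\ii_{[0,c)}\ \text{$\leb$-a.e.},\quad (\varphi_1)_\sharp\mm=\leb,\quad (\psi_1)_\sharp\leb=\mm.\]
Applying the same theorem to $(\Omega',\cB',\mm')$ --- which is legitimate precisely because $\mm'(\Omega')=c$ as well --- yields an analogous pair $\varphi_2:\Omega'\to[0,c)$, $\psi_2:[0,c)\to\Omega'$ with the corresponding four properties. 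Here nonatomicity is what forces each space into the interval model rather than a countable or atomic one, and the equality of total masses is what makes the two target intervals coincide.

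Next I would define the candidate isomorphisms by composition,
\[\varphi:=\psi_2\circ\varphi_1:\Omega\to\Omega',\qquad \psi:=\psi_1\circ\varphi_2:\Omega'\to\Omega,\]
which are measurable as compositions of measurable maps. The two push-forward identities then follow immediately by functoriality of the push-forward: $\varphi_\sharp\mm=(\psi_2)_\sharp(\varphi_1)_\sharp\mm=(\psi_2)_\sharp\leb=\mm'$, and symmetrically $\psi_\sharp\mm'=(\psi_1)_\sharp(\varphi_2)_\sharp\mm'=(\psi_1)_\sharp\leb=\mm$.

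The delicate point --- and the step I expect to require the most care --- is checking the two almost-everywhere inverse relations, because the cancellation $\varphi_2\circ\psi_2=\ii_{[0,c)}$ holds only off some $\leb$-null set $Z\subset[0,c)$, and I must ensure this null set is not enlarged to a set of positive mass when pulled back through $\varphi_1$. This is exactly where the push-forward property is essential: since $(\varphi_1)_\sharp\mm=\leb$, the set $\varphi_1^{-1}(Z)$ satisfies $\mm(\varphi_1^{-1}(Z))=\leb(Z)=0$, so for $\mm$-a.e.\ $\omega$ one has $\varphi_1(\omega)\notin Z$ and hence $\varphi_2(\psi_2(\varphi_1(\omega)))=\varphi_1(\omega)$; composing with $\psi_1$ and using $\psi_1\circ\varphi_1=\ii_\Omega$ $\mm$-a.e.\ gives $\psi\circ\varphi=\psi_1\circ\varphi_2\circ\psi_2\circ\varphi_1=\ii_\Omega$ $\mm$-a.e. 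The identity $\varphi\circ\psi=\ii_{\Omega'}$ $\mm'$-a.e.\ is obtained by the symmetric argument, interchanging the roles of the two spaces and using $(\varphi_2)_\sharp\mm'=\leb$ together with $\psi_2\circ\varphi_2=\ii_{\Omega'}$ $\mm'$-a.e. Thus the entire proof reduces to the measure-isomorphism theorem plus a careful bookkeeping of null sets under composition.
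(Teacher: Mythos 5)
Your proof is correct. For comparison: the paper offers no written argument for this theorem at all---it is stated as a result that ``follows by'' \cite[Theorem 9, Chapter 15]{royden}---so your proposal supplies exactly the derivation that the citation leaves implicit. You treat the cited result as the one-space statement (a standard Borel space with a nonatomic, positive finite measure of total mass $c$ is isomorphic mod $0$ to $[0,c)$ with Lebesgue measure $\leb$), reduce both $(\Omega,\cB,\mm)$ and $(\Omega',\cB',\mm')$ to this common canonical model, and compose. The push-forward identities follow from functoriality of $\sharp$, and the delicate step---the a.e.-inverse relations---is handled correctly: the exceptional $\leb$-null set $Z$ on which $\varphi_2\circ\psi_2$ fails to be the identity pulls back to an $\mm$-null set precisely because $(\varphi_1)_\sharp\mm=\leb$, and the intersection of the two resulting full-measure sets is still of full measure. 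The only cosmetic point worth noting is that, depending on the edition, the cited theorem may be phrased with the closed interval $[0,c]$ (or with Polish spaces rather than standard Borel spaces); since the endpoint is Lebesgue-negligible and standard Borel spaces are by definition Borel-isomorphic to Polish spaces, both discrepancies are absorbed by modifying the maps on null sets and do not affect your argument.
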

\begin{corollary}\label{cor:repre} Let $(\Omega, \cB)$ be a standard Borel space endowed with a nonatomic, positive finite measure $\mm$ and $(\Omega', \cB')$ be a standard Borel space. Then for every nonatomic, positive measure $\mu$ on $(\Omega', \cB')$ such that $\mu(\Omega')= \mm(\Omega)$, there exists a measurable map $X: \Omega \to \Omega'$ such that $X_\sharp \mm = \mu$.
\end{corollary}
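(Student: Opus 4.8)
The plan is to reduce the statement to a direct application of Theorem~\ref{theo:kura}. The key observation is that, although $(\Omega',\cB')$ is given only as a standard Borel space, the hypothesis furnishes a measure $\mu$ on it which already carries exactly the structure required by Theorem~\ref{theo:kura}. Accordingly, my first step is to equip $(\Omega',\cB')$ with the measure $\mm':=\mu$ and to check that all the hypotheses of Theorem~\ref{theo:kura} are met: $\mu$ is positive and nonatomic by assumption, it is finite because $\mu(\Omega')=\mm(\Omega)<+\infty$ (here I use that $\mm$ is a finite measure), and the mass-matching condition $\mm(\Omega)=\mm'(\Omega')$ holds by the assumption $\mu(\Omega')=\mm(\Omega)$.

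With these hypotheses verified, Theorem~\ref{theo:kura} yields measurable maps $\varphi:\Omega\to\Omega'$ and $\psi:\Omega'\to\Omega$ satisfying \eqref{eq:kura}; in particular $\varphi_\sharp\mm=\mm'=\mu$. I would then simply set $X:=\varphi$, which is measurable and satisfies $X_\sharp\mm=\mu$, as required. The map $\psi$ and the two almost-everywhere inversion identities of \eqref{eq:kura} play no role in this corollary and may be discarded, since only the existence of a measure-transporting map in one direction is asserted.

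There is essentially no obstacle in this argument: the corollary is a mild repackaging of Theorem~\ref{theo:kura}, obtained by retaining only the push-forward conclusion $\varphi_\sharp\mm=\mm'$ and forgetting the invertibility information. The single point deserving a line of care is the verification that $\mu$ is finite, which follows immediately from the finiteness of $\mm$ combined with the mass-matching hypothesis $\mu(\Omega')=\mm(\Omega)$; once this is noted, the hypotheses of Theorem~\ref{theo:kura} apply verbatim.
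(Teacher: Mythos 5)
Your proof is correct and is precisely the argument the paper intends: the corollary appears without an explicit proof because it is an immediate consequence of Theorem~\ref{theo:kura} applied with $\mm':=\mu$, taking $X:=\varphi$ and discarding $\psi$ and the inversion identities. Your extra remark that finiteness of $\mu$ follows from $\mu(\Omega')=\mm(\Omega)<+\infty$ is the one hypothesis-check worth making explicit, and you made it.
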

\begin{lemma}[Existence of $\cN$-segmentations]
\label{le:exseg} For any standard Borel space $(\Omega, \cB)$ endowed with a nonatomic, positive finite measure $\mm$ and any unbounded directed set $\cN \subset \N$ w.r.t.~$\prec$, there exists a $\cN$-segmentation of $(\Omega, \cB, \mm)$.
\end{lemma}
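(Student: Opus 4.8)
The plan is to reduce everything to the canonical interval system of Example~\ref{ex:canon} by realizing the whole family $(\mathfrak P_N)_{N\in\cN}$ as the pullback, under a single measurable map $\Psi\colon\Omega\to[0,1)$, of the intervals $\mathfrak I_N=(I_{N,k})_{k\in I_N}$. Concretely, I would put $c:=\mm(\Omega)$ and look for a Borel map $\Psi$ that is (a) defined on all of $\Omega$, (b) measure preserving in the sense $\Psi_\sharp\mm=\lambda^c$, and (c) injective; then I set $\Omega_{N,k}:=\Psi^{-1}(I_{N,k})$ and $\mathfrak P_N:=\{\Omega_{N,k}\}_{k\in I_N}$. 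The virtue of this route is that the compatibility conditions become automatic. Since $\{I_{N,k}\}_k$ is a partition of $[0,1)$, $\mathfrak P_N$ is an $N$-partition of $\Omega$ (property~(1)); from $\Psi_\sharp\mm=\lambda^c$ we get $\mm(\Omega_{N,k})=\lambda^c(I_{N,k})=c/N$ (property~(2)); and since the intervals already satisfy $\bigcup_{k=0}^{K-1}I_{N,mK+k}=I_{M,m}$ whenever $N=KM$, the same identity passes to preimages for \emph{every} pair $M\prec N$ in $\cN$ at once (property~(3)). This is the main reason to prefer one pullback map over a recursive chain-by-chain subdivision: the nesting for all of $\cN$, not just for a cofinal chain, comes for free.

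Property~(4) is where the work concentrates. With the pullback definition one has $\sigma\big(\{\cB_N:N\in\cN\}\big)=\Psi^{-1}\big(\sigma(\{I_{N,k}:N\in\cN,\ k\in I_N\})\big)$, so I would first record that, since $\cN$ is unbounded for $\prec$ (an unbounded-for-$\prec$ subset of $\N$ is infinite, hence unbounded in the usual order), it contains arbitrarily large integers and the intervals $I_{N,k}$ have vanishing mesh; therefore they generate the full Borel $\sigma$-algebra, $\sigma(\{I_{N,k}:N\in\cN\})=\cB([0,1))$. It then remains to prove $\Psi^{-1}(\cB([0,1)))=\cB$, and this is exactly where injectivity of $\Psi$ enters: by the classical Lusin--Souslin theorem an injective Borel map between standard Borel spaces carries Borel sets to Borel sets and is a Borel isomorphism onto its (Borel) image, so $B=\Psi^{-1}(\Psi(B))\in\Psi^{-1}(\cB([0,1)))$ for every $B\in\cB$, giving the equality. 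Thus the entire statement reduces to producing a genuinely (everywhere) injective, measure-preserving Borel map $\Psi$.

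Constructing such a $\Psi$ is the main obstacle, and the delicate point is injectivity rather than measure preservation: Theorem~\ref{theo:kura} only supplies maps that are mutually inverse $\mm$-a.e., hence injective merely on a full-measure set, which would yield property~(4) only up to $\mm$-null sets. To upgrade to an everywhere injective map I would proceed in two moves. First, using that $(\Omega,\cB)$ is an \emph{uncountable} standard Borel space (it must be uncountable, since a nonatomic positive measure cannot live on a countable set), I fix a genuine Borel isomorphism $\Theta\colon\Omega\to[0,1)$ and push $\mm$ forward to a nonatomic measure $\mu:=\Theta_\sharp\mm$ on $[0,1)$ of total mass $c$. Second, I reduce to building an injective measure-preserving Borel map $R\colon([0,1),\mu)\to([0,1),\lambda^c)$ and put $\Psi:=R\circ\Theta$, which is then injective as a composition of injections and satisfies $\Psi_\sharp\mm=R_\sharp\mu=\lambda^c$. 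For $R$ I would take the normalized cumulative distribution function $F(x):=\mu([0,x))/c$, which is continuous by nonatomicity and satisfies $F_\sharp\mu=\lambda^c$; its only failure of injectivity occurs on the at most countably many maximal open intervals on which $\mu$ puts no mass, whose union $Z$ is $\mu$-null. On $Z$ I would redefine $R$ so as to map it injectively into a $\lambda^c$-null Borel subset of $[0,1)$ disjoint from $F([0,1)\setminus Z)$, which is possible because both pieces are standard Borel and Lebesgue-null uncountable Borel sets leave ample room. Since $Z$ is $\mu$-null this modification does not alter the pushforward, so $R_\sharp\mu=\lambda^c$ is preserved while $R$ becomes injective on all of $[0,1)$. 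I expect the bookkeeping needed to keep these countably many redistributions mutually disjoint (and jointly injective) to be the only genuinely fussy part; once $\Psi$ is in hand, properties~(1)--(4) follow as above, exhibiting $(\mathfrak P_N)_{N\in\cN}$ as the required $\cN$-segmentation of $(\Omega,\cB,\mm)$.
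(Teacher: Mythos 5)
Your overall architecture is sound, and it is a genuine (preimage) variant of the paper's argument: the paper takes \emph{forward} images $\Omega_{N,k}=\varphi(I_{N,k}\setminus U)$ under the mod-$0$ isomorphism of Theorem~\ref{theo:kura} and absorbs the exceptional null set into the cell $\Omega_{N,0}$, while you pull back the intervals of Example~\ref{ex:canon} under a single everywhere-injective measure-preserving map $\Psi$. Your verification that properties (1)--(3) are automatic for preimages, and the Lusin--Souslin argument giving $\Psi^{-1}(\mathcal B([0,1)))=\cB$ for property (4), are correct; indeed your insistence on genuine (not a.e.) injectivity addresses exactly the point that the paper's ``it is easy to check'' glosses over. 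But the whole proof then rests on the construction of the injective measure-preserving map $R$ on $[0,1)$, and that construction, as you describe it, cannot work.

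The set into which you want to re-route $Z$ does not exist. Write $Z=\bigcup_i(a_i,b_i)$ as the union of the maximal open $\mu$-null intervals. By maximality and nonatomicity the endpoint $a_i$ never lies in $Z$, and $F(x)=F(a_i)$ for every $x\in(a_i,b_i)$; hence every value attained by $F$ on $Z$ is already attained on $[0,1)\setminus Z$, so $F([0,1)\setminus Z)=F([0,1))\supseteq[0,1)$ (the inclusion because $F$ is continuous with $F(0)=0$ and $\lim_{x\to 1^-}F(x)=1$). Consequently the \emph{only} subset of $[0,1)$ disjoint from $F([0,1)\setminus Z)$ is the empty set: whenever $Z\neq\emptyset$ there is no room at all for an injective re-routing of $Z$, and for the same reason $F$ restricted to $[0,1)\setminus Z$ is not even injective, since it identifies the two endpoints $a_i,b_i$ of each flat interval. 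The repair must carve room out of the \emph{target} before deciding where to keep $F$: fix an uncountable $\lambda^c$-null Borel set $K\subset[0,1)$ (a measure-zero Cantor set) and redefine $F$ not on $Z$ but on the $\mu$-null Borel set $E:=F^{-1}(K)\cup\bigcup_i\bigl([a_i,b_i]\cap[0,1)\bigr)$. Off $E$ the map $F$ is injective (all flat pairs have been removed) with image contained in $[0,1)\setminus K$; on the other hand $E$ is uncountable, because $F$ maps $F^{-1}(K)$ onto $K$, so $E$ and $K$ are Borel-isomorphic as uncountable standard Borel spaces. Gluing $F|_{[0,1)\setminus E}$ with any Borel isomorphism $E\to K$ yields an injective Borel map $R$ with $R_\sharp\mu=\lambda^c$, since both modifications occur on $\mu$-null, respectively $\lambda^c$-null, sets. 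Equivalently, you may simply quote the strong form of the measure isomorphism theorem (e.g.\ Kechris, \emph{Classical Descriptive Set Theory}, Theorem~17.41), which provides a Borel \emph{bijection} pushing $\mm$ to Lebesgue measure (up to a countable rearrangement to trade $[0,1]$ for $[0,1)$); with such a map in hand, the rest of your argument goes through verbatim.
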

\begin{proof} Let $([0,1), \mathcal{B}([0,1)), \lambda^c, (\mathfrak {I}_N)_{N \in \cN})$ be the $\cN$-refined standard Borel space of Example \ref{ex:canon} with $c=\mm(\Omega)$. Since $([0,1), \mathcal{B}([0,1)))$ is a standard Borel space endowed with the nonatomic, positive finite measure $\lambda^c$ such that $\mm(\Omega) = \lambda^c([0,1))$, by Theorem \ref{theo:kura} we can find measurable maps $\varphi:[0,1) \to \Omega$, $\psi: \Omega \to [0,1)$ and two subsets $\Omega_0 \in \cB$, $U \in \mathcal{B}([0,1))$ such that $\mm(\Omega_0)=\lambda^c(U)=0$, $\varphi\circ \psi= \ii_{\Omega \setminus \Omega_0}$, $\psi \circ \varphi = \ii_{[0,1)\setminus U}$, $\varphi_\sharp \lambda^c=\mm$ and $\psi_\sharp \mm=\lambda^c$. We can thus define 
\[ \Omega_{N,0} = \varphi(I_{N,0}\setminus U) \cup \Omega_0, \quad \Omega_{N,k} = \varphi(I_{N,k}\setminus U), \quad k \in I_N\setminus\{0\}, \, N \in \cN.\]
Setting $\mathfrak P_N := \{\Omega_{N,k}\}_{k \in I_N}$ for every $N \in \cN$, it is easy to check that $(\mathfrak P_N)_{N \in \cN}$ is a $\cN$-segmentation of $(\Omega, \cB, \mm)$.
\end{proof}

\begin{definition}[Compatible partitions]
If $(\Omega, \cB)$ and $(\Omega', \cB')$ are standard Borel spaces endowed with nonatomic, positive finite measures $\mm$ and $\mm'$ respectively, such that $\mm(\Omega)=\mm'(\Omega')$ and $\mathfrak P_N =\{\Omega_{N,k}\}_{k \in I_N}$ and $\mathfrak P'_N =\{\Omega'_{N,k}\}_{k \in I_N}$ are $N$-partitions of $(\Omega, \cB)$ and $(\Omega', \cB')$ respectively, we say that $\mathfrak P_N$ and $\mathfrak P'_N$ are \emph{$\mm-\mm'$ compatible} if
\[ \mm(\Omega_{N,k}) = \mm'(\Omega'_{N,k}) \quad \forall k \in I_N.\]
\end{definition}

\begin{lemma}[Isomorphisms preserving compatible partitions]
\label{le:mpm} Let $(\Omega, \cB)$ and $(\Omega', \cB')$ be standard Borel spaces endowed with nonatomic, positive finite measures $\mm$ and $\mm'$ respectively such that $\mm(\Omega)= \mm'(\Omega')$ and let $\mathfrak P_N=\{\Omega_{N,k}\}_{k \in I_N}$ and $\mathfrak P'_N=\{\Omega'_{N,k}\}_{k \in I_N}$ be two $\mm-\mm'$ compatible $N$-partitions of $(\Omega, \cB)$ and $(\Omega', \cB')$ respectively, for some $N \in \N$. Then there exist two functions $\varphi: \Omega \to \Omega'$ and $\psi: \Omega'\to \Omega$ such that
\begin{enumerate}
    \item $\varphi$ is $\cB$-$\cB'$ measurable and $\sigma(\mathfrak P_N)$-$\sigma(\mathfrak P'_N)$ measurable;
    \item $\psi$ is $\cB'$-$\cB$ measurable and $\sigma(\mathfrak P'_N)$-$\sigma(\mathfrak P_N)$ measurable;
    \item for every $k \in I_N$ it holds
    \begin{equation}\label{eq:subset}
\varphi(\Omega_{N,k}) \subset \Omega'_{N, k}, \quad \psi(\Omega'_{N,k}) \subset \Omega_{N, k};
\end{equation}
    \item for every $I \subset I_N$ it holds 
\begin{align*}
    \psi_I \circ \varphi_I &= \ii_{\Omega_I} \text{ $\mm_I$-a.e. in $\Omega_I$},\\
    \varphi_I \circ \psi_I &= \ii_{\Omega'_I} \text{ $\mm_I'$-a.e. in $\Omega_I'$},\\
    (\varphi_I)_\sharp \mm_I &= \mm_I',\\
    (\psi_I)_\sharp \mm_I'&=\mm_I,
\end{align*}
where the subscript $I$ denotes the restriction to $\cup_{k \in I} \Omega_{N,k}$ or $\cup_{k \in I} \Omega'_{N,k}$.
\end{enumerate}
\end{lemma}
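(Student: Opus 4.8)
The plan is to build $\varphi$ and $\psi$ separately on each pair of corresponding cells $\Omega_{N,k}$ and $\Omega'_{N,k}$ and then glue the pieces. The key observation is that a Borel subset of a standard Borel space is again standard Borel (with the trace $\sigma$-algebra), and that the restriction of a nonatomic finite measure to such a subset stays nonatomic and finite. Hence, for each $k\in I_N$, the cell $\Omega_{N,k}$ with $\mm|_{\Omega_{N,k}}$ and the cell $\Omega'_{N,k}$ with $\mm'|_{\Omega'_{N,k}}$ are standard Borel spaces carrying nonatomic finite measures, and the $\mm$-$\mm'$ compatibility hypothesis gives exactly $\mm(\Omega_{N,k})=\mm'(\Omega'_{N,k})$, i.e.~equal total mass. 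On each cell of positive mass I would therefore invoke Theorem \ref{theo:kura} to obtain measurable maps $\varphi_k:\Omega_{N,k}\to\Omega'_{N,k}$ and $\psi_k:\Omega'_{N,k}\to\Omega_{N,k}$ that are mutually inverse $\mm$- (resp.~$\mm'$-)almost everywhere and satisfy $(\varphi_k)_\sharp(\mm|_{\Omega_{N,k}})=\mm'|_{\Omega'_{N,k}}$ and $(\psi_k)_\sharp(\mm'|_{\Omega'_{N,k}})=\mm|_{\Omega_{N,k}}$.

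On the finitely many cells of zero measure one simply picks any Borel map into the corresponding target cell (e.g.~a constant), so that the subset constraint is respected; being $\mm$- resp.~$\mm'$-negligible, these cells play no role in any of the almost-everywhere identities or pushforward relations below. Gluing, I would define $\varphi:\Omega\to\Omega'$ and $\psi:\Omega'\to\Omega$ by declaring $\varphi:=\varphi_k$ on $\Omega_{N,k}$ and $\psi:=\psi_k$ on $\Omega'_{N,k}$; since the cells form $N$-partitions of $\Omega$ and $\Omega'$, these are globally defined, and they are $\cB$-$\cB'$ (resp.~$\cB'$-$\cB$) measurable because they are Borel on each cell. Property (3) then holds by construction, since $\varphi(\Omega_{N,k})=\varphi_k(\Omega_{N,k})\subset\Omega'_{N,k}$ and symmetrically for $\psi$.

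The refinement measurability in (1)--(2) is a formal consequence of (3): because $\varphi$ sends $\Omega_{N,j}$ into $\Omega'_{N,j}$ and the primed cells are pairwise disjoint, one gets $\varphi^{-1}(\Omega'_{N,k})=\Omega_{N,k}$, so the preimage of any union $\bigcup_{k\in J}\Omega'_{N,k}$ is $\bigcup_{k\in J}\Omega_{N,k}\in\sigma(\mathfrak P_N)$, which is exactly $\sigma(\mathfrak P_N)$-$\sigma(\mathfrak P'_N)$ measurability; the argument for $\psi$ is symmetric. For (4), fix $I\subset I_N$: the identities $\psi_I\circ\varphi_I=\ii_{\Omega_I}$ and $\varphi_I\circ\psi_I=\ii_{\Omega'_I}$ hold $\mm_I$- resp.~$\mm'_I$-a.e.~since they hold cellwise by Theorem \ref{theo:kura} and $\Omega_I=\bigcup_{k\in I}\Omega_{N,k}$ is a finite disjoint union; and for every bounded Borel $\phi$ on $\Omega'_I$,
\[
\int \phi\,\de\big((\varphi_I)_\sharp\mm_I\big)
=\sum_{k\in I}\int_{\Omega_{N,k}}\phi\circ\varphi_k\,\de\mm
=\sum_{k\in I}\int_{\Omega'_{N,k}}\phi\,\de\mm'
=\int\phi\,\de\mm'_I,
\]
giving $(\varphi_I)_\sharp\mm_I=\mm'_I$, and likewise $(\psi_I)_\sharp\mm'_I=\mm_I$.

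I expect no deep obstacle: the whole argument reduces to applying the cell-by-cell isomorphism theorem and reassembling. The only points deserving attention are the two structural facts used implicitly—that Borel subsets of standard Borel spaces are standard Borel, and that restrictions of nonatomic measures stay nonatomic, so that Theorem \ref{theo:kura} applies to each cell—together with the harmless handling of the zero-measure cells, which do not interfere with any of the stated conclusions.
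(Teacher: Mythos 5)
Your proposal is correct and follows essentially the same route as the paper's proof: apply Theorem \ref{theo:kura} to each pair of corresponding cells $(\Omega_{N,k},\mm|_{\Omega_{N,k}})$ and $(\Omega'_{N,k},\mm'|_{\Omega'_{N,k}})$, then glue the resulting maps along the partitions, with properties (1)--(4) following cellwise. The only difference is that you spell out the verification of the measurability claims, the pushforward identities, and the handling of zero-measure cells, which the paper leaves implicit with ``satisfied by construction.''
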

\begin{proof} Applying Theorem \ref{theo:kura} to the standard Borel spaces $(\Omega_{\{k\}}, \cB_{\{k\}})$ and $(\Omega'_{\{k\}}, \cB'_{\{k\}})$ endowed, respectively, with the nonatomic, positive finite measures $\mm_{\{k\}}$ and $\mm'_{\{k\}}$ for every $k \in I_N$, we obtain the existence of measurable functions $\varphi_k, \psi_k$ satisfying \eqref{eq:kura} for each couple $\Omega_{N,k}$, $\Omega'_{N,k}$. It is then enough to define
\[ \varphi(\omega) := \varphi_k(\omega) \quad \text{ if } \omega \in \Omega_{N,k}, \quad \psi(\omega') := \psi_k(\omega') \quad \text{ if } \omega' \in \Omega'_{N,k}.\]
Notice that \eqref{eq:subset} is satisfied by construction.
\end{proof}

\begin{corollary}[Lifting permutations to isomorphisms]
\label{cor:isomor} Let $(\Omega, \cB, \mm)$ be a standard Borel space endowed with a nonatomic, positive finite measure $\mm$  and let $\mathfrak P_N=\{\Omega_{N,k}\}_{k \in I_N}$ be a $N$-partition of $(\Omega, \cB)$ for some $N \in \N$ such that $\mm(\Omega_{N,k})= \mm(\Omega)/N$ for every $k \in I_N$. If $\sigma \in \symg{I_N}$, there exists a measure-preserving isomorphism $g \in \rmS(\Omega, \cB, \mm; \sigma(\mathfrak P_N))$ such that 
\[ (g_k)_{\sharp}\mm|_{\Omega_{N,k}} = \mm|_{\Omega_{N,\sigma(k)}} \quad \forall k \in I_N, \]
where $g_k$ is the restriction of $g$ to $\Omega_{N,k}$.
\end{corollary}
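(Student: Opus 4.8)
The plan is to reduce the statement to Lemma \ref{le:mpm} by regarding the permuted partition as a second, $\mm$-$\mm$ compatible partition of the \emph{same} space. Concretely, I would keep $(\Omega',\cB',\mm')=(\Omega,\cB,\mm)$ and introduce the relabelled family $\mathfrak P'_N:=\{\Omega'_{N,k}\}_{k\in I_N}$ with $\Omega'_{N,k}:=\Omega_{N,\sigma(k)}$. Since $\sigma$ is a bijection of $I_N$, the collection $\mathfrak P'_N$ consists of exactly the same blocks as $\mathfrak P_N$, merely reindexed; hence it is again an $N$-partition of $(\Omega,\cB)$ and it generates the same $\sigma$-algebra, $\sigma(\mathfrak P'_N)=\sigma(\mathfrak P_N)$. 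Because every block has measure $\mm(\Omega)/N$, we have $\mm(\Omega_{N,k})=\mm(\Omega)/N=\mm'(\Omega'_{N,k})$ for all $k\in I_N$, so $\mathfrak P_N$ and $\mathfrak P'_N$ are $\mm$-$\mm$ compatible, and all hypotheses of Lemma \ref{le:mpm} are satisfied.

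Applying Lemma \ref{le:mpm} produces maps $\varphi,\psi:\Omega\to\Omega$, and I would simply set $g:=\varphi$ and read off the four conclusions. Conclusion (1) gives that $g$ is $\cB$-$\cB$ measurable and $\sigma(\mathfrak P_N)$-$\sigma(\mathfrak P'_N)$ measurable; since $\sigma(\mathfrak P'_N)=\sigma(\mathfrak P_N)$, this is precisely $\sigma(\mathfrak P_N)$-$\sigma(\mathfrak P_N)$ measurability. Taking $I=I_N$ in conclusion (4) yields $\psi\circ\varphi=\ii_\Omega$ $\mm$-a.e.\ together with $\varphi_\sharp\mm=\mm$: the first identity forces $g$ to be injective on a set of full $\mm$-measure (a left inverse exists a.e.), so $g$ is essentially injective, and combined with $g_\sharp\mm=\mm$ this shows $g\in\rmS(\Omega,\cB,\mm;\sigma(\mathfrak P_N))$.

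It then remains to verify the block-wise pushforward identity, which is exactly the instance $I=\{k\}$ of conclusion (4): there $\varphi_{\{k\}}$ is by definition the restriction of $g$ to $\Omega_{N,k}$, i.e.\ $g_k$, while $\mm_{\{k\}}=\mm|_{\Omega_{N,k}}$ and $\mm'_{\{k\}}=\mm'|_{\Omega'_{N,k}}=\mm|_{\Omega_{N,\sigma(k)}}$, so that $(g_k)_\sharp\mm|_{\Omega_{N,k}}=\mm|_{\Omega_{N,\sigma(k)}}$, as required (the inclusion $g(\Omega_{N,k})\subset\Omega_{N,\sigma(k)}$ comes for free from conclusion (3)). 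I expect no serious obstacle: all the analytic content — the block-by-block application of Theorem \ref{theo:kura} and the gluing over a finite disjoint partition — is already packaged inside Lemma \ref{le:mpm}. The only points requiring care are bookkeeping ones, namely confirming that the reindexed family $\mathfrak P'_N$ is a legitimate partition generating the same $\sigma$-algebra, and matching the restriction/subscript conventions of Lemma \ref{le:mpm} so that $\varphi_{\{k\}}$ indeed coincides with the desired $g_k$. Should one prefer to bypass Lemma \ref{le:mpm} entirely, the same map can be built directly by applying Theorem \ref{theo:kura} to each pair $(\Omega_{N,k},\mm|_{\Omega_{N,k}})$ and $(\Omega_{N,\sigma(k)},\mm|_{\Omega_{N,\sigma(k)}})$ — which share the mass $\mm(\Omega)/N$ — and gluing the resulting essentially invertible measure-preserving maps.
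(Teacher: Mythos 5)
Your proposal is correct and takes exactly the same route as the paper: the paper's own proof consists of applying Lemma \ref{le:mpm} to $(\Omega',\cB',\mm')=(\Omega,\cB,\mm)$ with the permuted partition $\mathfrak P'_N=\{\Omega_{N,\sigma(k)}\}_{k\in I_N}$ and taking $g=\varphi$. The bookkeeping you spell out --- that $\mathfrak P'_N$ generates the same $\sigma$-algebra, that essential injectivity follows from the a.e.\ left inverse $\psi\circ\varphi=\ii_\Omega$, and that the block-wise identity is the $I=\{k\}$ instance of conclusion (4) --- is precisely what the paper leaves implicit.
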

\begin{proof} It is enough to apply Lemma \ref{le:mpm} to the standard Borel spaces $(\Omega, \cB)$ and $(\Omega', \cB')$=$(\Omega, \cB)$ endowed with the nonatomic, positive finite measures $\mm$ and $\mm'=\mm$, respectively, with the $N$-partitions $\mathfrak P_N$ and $\mathfrak P'_N=\{\Omega_{N,\sigma(k)}\}_{k \in I_N}$ respectively.
\end{proof}

\begin{corollary}\label{cor:mpsub} Let $(\Omega, \cB)$ be a standard Borel space endowed with a nonatomic, positive finite measure $\mm$ and let $\Omega_0, \Omega_1 \in \cB$ be such that $\mm(\Omega_0)=\mm(\Omega_1)>0$ and $\Omega_0 \cap \Omega_1 = \emptyset$. Then there exists a measure-preserving isomorphism $g \in \rmS(\Omega, \cB, \mm)$ such that
\[ (g_0)_\sharp \mm |_{\Omega_{0}} = \mm |_{\Omega_{1}}, \quad (g_1)_\sharp \mm |_{\Omega_{1}} = \mm |_{\Omega_{0}}, \quad g(\omega)= \omega  \text{ in } \Omega \setminus (\Omega_0 \cup \Omega_1),\]
where $g_i$ is the restriction of $g$ to $\Omega_k$, $k=0,1$.
\end{corollary}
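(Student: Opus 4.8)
The plan is to reduce everything to Theorem~\ref{theo:kura} applied to the two halves $\Omega_0$ and $\Omega_1$, and then to glue the resulting isomorphism with the identity on the remaining part of $\Omega$.

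First I would observe that the restrictions of $\cB$ to $\Omega_0$ and to $\Omega_1$ turn these sets into standard Borel spaces (a Borel subset of a standard Borel space is itself standard Borel), and that the restricted measures $\mm_0:=\mm|_{\Omega_0}$ and $\mm_1:=\mm|_{\Omega_1}$ are nonatomic, positive and finite, with $\mm_0(\Omega_0)=\mm_1(\Omega_1)$ by hypothesis. This is precisely the setting of Theorem~\ref{theo:kura}, which yields measurable maps $\varphi:\Omega_0\to\Omega_1$ and $\psi:\Omega_1\to\Omega_0$ satisfying \eqref{eq:kura}, namely $\psi\circ\varphi=\ii$ $\mm_0$-a.e.~on $\Omega_0$, $\varphi\circ\psi=\ii$ $\mm_1$-a.e.~on $\Omega_1$, $\varphi_\sharp\mm_0=\mm_1$ and $\psi_\sharp\mm_1=\mm_0$.

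Next I would define $g:\Omega\to\Omega$ by gluing: set $g:=\varphi$ on $\Omega_0$, $g:=\psi$ on $\Omega_1$, and $g:=\ii$ on $R:=\Omega\setminus(\Omega_0\cup\Omega_1)$. Measurability of $g$ is immediate, since $\Omega_0,\Omega_1\in\cB$ and $\varphi,\psi$ are measurable, so $g$ is obtained by gluing measurable maps on a measurable partition of $\Omega$. For the measure-preservation, I would use the disjoint decomposition $\mm=\mm_0+\mm_1+\mm|_R$ and additivity of the push-forward to compute $g_\sharp\mm=\varphi_\sharp\mm_0+\psi_\sharp\mm_1+\mm|_R=\mm_1+\mm_0+\mm|_R=\mm$; the same computation restricted to each branch gives the required identities $(g_0)_\sharp\mm|_{\Omega_0}=\mm|_{\Omega_1}$ and $(g_1)_\sharp\mm|_{\Omega_1}=\mm|_{\Omega_0}$, together with $g=\ii$ on $R$.

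Finally, for essential injectivity I would choose a $\mm_0$-full set $\Omega_0'\subset\Omega_0$ on which $\varphi$ is injective and a $\mm_1$-full set $\Omega_1'\subset\Omega_1$ on which $\psi$ is injective (these exist since $\varphi,\psi$ are a.e.~mutually inverse), and then note that $g$ is injective on the $\mm$-full set $\Omega_0'\cup\Omega_1'\cup R$: indeed the three branches of $g$ are individually injective and have pairwise disjoint images, because $\varphi(\Omega_0)\subset\Omega_1$, $\psi(\Omega_1)\subset\Omega_0$ and $g(R)=R$, while $\Omega_0,\Omega_1,R$ are pairwise disjoint. This gives $g\in\rmS(\Omega,\cB,\mm)$ with the stated properties. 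There is no serious obstacle here; the only points deserving care are verifying that the restricted spaces satisfy the hypotheses of Theorem~\ref{theo:kura}, and checking that the three branches really have disjoint images, so that the glued map remains essentially injective across the partition.
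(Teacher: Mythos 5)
Your proof is correct and follows essentially the same route as the paper: the paper invokes Corollary~\ref{cor:isomor} (with the transposition of $\{0,1\}$) on $\Omega_0\cup\Omega_1$ and extends by the identity, and that corollary is itself just Theorem~\ref{theo:kura} applied cell-by-cell and glued, which is exactly what you do by hand. The only cosmetic difference is that your $g$ uses the mutually inverse pair $(\varphi,\psi)$ on the two halves, so it is $\mm$-a.e.\ an involution, whereas the paper's construction does not insist on this; both satisfy the required properties.
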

\begin{proof} Applying Corollary \ref{cor:isomor} to the standard Borel space $(\Omega_0 \cup \Omega_1,\cB|_{\Omega_0 \cup \Omega_1})$ endowed with the nonatomic, positive finite measure $ \mm|_{\Omega_0 \cup \Omega_1}$  with the $2$-Borel partition $\mathfrak P_2= \{\Omega_k\}_{k=0,1}$ and $\sigma$ sending $0$ to $1$, we obtain the existence of a measure-preserving isomorphism $\tilde{g} \in \rmS(\Omega_0 \cup \Omega_1,\cB|_{\Omega_0 \cup \Omega_1}, \mm|_{\Omega_0 \cup \Omega_1})$ such that 
\[ (\tilde{g}_0)_\sharp \mm |_{\Omega_{0}} = \mm |_{\Omega_{1}}, \quad (\tilde{g}_1)_\sharp \mm |_{\Omega_{1}} = \mm |_{\Omega_{0}}, \]
where $\tilde{g}_i$ is the restriction of $\tilde{g}$ to $\Omega_k$, $k=0,1$. It is then enough to define $g: \Omega \to \Omega$ as
\[ g(\omega) = \begin{cases} \tilde{g}(\omega) \quad &\text{ if } \omega \in \Omega_0 \cup \Omega_1, \\ \omega \quad &\text{ if } \omega \in \Omega \setminus (\Omega_0 \cup \Omega_1). \end{cases}\]
\end{proof}

The next result is a particular case of Doob's Martingale Convergence Theorem for Banach-valued maps, 
see \cite[Theorem 6.1.12]{stroock}.
We recall that a filtration on $(\Omega, \cB)$ is a sequence $(\mathcal{F}_n)_{n \in \N}$ of sub-sigma algebras of $\cB$ such that $\mathcal{F}_n \subset \mathcal{F}_{n+1}$.
\begin{theorem}\label{theo:stroock}
Let $(\Omega, \cB)$ be a standard Borel space endowed with a nonatomic, positive finite measure $\mm$,  $(\mathcal{F}_n)_{n \in \N}$ be a filtration on $(\Omega, \cB)$ such that $ \sigma \left ( \left \{ \mathcal{F}_n \mid n \in \N \right  \} \right ) = \cB$, let 
 $\Y$ be a separable Banach space
and let $p\in [1,\infty).$
Then, given $X \in L^p(\Omega, \cB, \mm;\Y)$, the $\Y$-valued  martingale
\[ X_n:= \E_\mm \left [ X \mid \mathcal{F}_n \right ], \quad n \in \N, \]
satisfies
\begin{equation}\label{eq:lebdiffgu}
 \lim_{n \to + \infty}X_n  = X    
\end{equation}
both $\mm$-a.e.~and in $L^p(\Omega, \cB, \mm;\Y)$.
\end{theorem}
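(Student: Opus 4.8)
The plan is to establish the two convergence statements separately, treating the $L^p$ convergence by a density-and-contraction argument and the more delicate $\mm$-a.e.\ convergence by a maximal inequality that I reduce to the classical scalar case. As a preliminary I would record three facts. Since $\Y$ is separable and $\mm$ is finite, every $X\in L^p(\Omega,\cB,\mm;\Y)$ is Bochner integrable and the conditional expectations $X_n=\E_\mm[X\mid\mathcal F_n]$ are well-defined $\Y$-valued maps; by the tower property $(X_n)_{n\in\N}$ is a martingale. By conditional Jensen applied to the convex map $y\mapsto|y|_\Y^p$, conditional expectation is an $L^p$-contraction, $\|\E_\mm[Z\mid\mathcal F_n]\|_{L^p}\le\|Z\|_{L^p}$; and, applied to $y\mapsto|y|_\Y$, it gives the pointwise domination $|\E_\mm[Z\mid\mathcal F_n]|_\Y\le\E_\mm[|Z|_\Y\mid\mathcal F_n]$ for every $Z\in L^1(\Omega,\cB,\mm;\Y)$, the right-hand side being a nonnegative scalar martingale closed by $|Z|_\Y$.

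For the $L^p$ statement I would use that the \emph{cylindrical simple maps} $\sum_{i=1}^k y_i\,\mathbbm 1_{A_i}$ with $y_i\in\Y$ and $A_i\in\mathcal F_m$ for some $m\in\N$ are dense in $L^p(\Omega,\cB,\mm;\Y)$: ordinary $\Y$-valued simple functions are dense by separability of $\Y$, and each $\mathbbm 1_B$ with $B\in\cB$ is approximated in $L^p$ by $\mathbbm 1_{B'}$ with $B'\in\bigcup_m\mathcal F_m$, because the increasing union $\bigcup_m\mathcal F_m$ is an algebra generating $\cB$. For such an approximant $X^\varepsilon$ one has $\E_\mm[X^\varepsilon\mid\mathcal F_n]=X^\varepsilon$ as soon as $n\ge m$. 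Given a general $X$ and $\varepsilon>0$, choosing $X^\varepsilon$ with $\|X-X^\varepsilon\|_{L^p}<\varepsilon$ and using the contraction property to bound $\|X_n-\E_\mm[X^\varepsilon\mid\mathcal F_n]\|_{L^p}\le\|X-X^\varepsilon\|_{L^p}$, a triangle-inequality $3\varepsilon$ argument yields $\limsup_n\|X_n-X\|_{L^p}\le 2\varepsilon$, and letting $\varepsilon\downarrow0$ proves $L^p$ convergence.

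For the $\mm$-a.e.\ statement I would combine the domination above with Doob's weak-type maximal inequality for the nonnegative scalar martingale $n\mapsto\E_\mm[|Z|_\Y\mid\mathcal F_n]$, obtaining
\[\mm\Big(\sup_n|\E_\mm[Z\mid\mathcal F_n]|_\Y>\lambda\Big)\le\frac1\lambda\,\|Z\|_{L^1}\le\frac{C}{\lambda}\,\|Z\|_{L^p},\]
where $C=\mm(\Omega)^{1-1/p}$ comes from H\"older (here the finiteness of $\mm$ is used). Taking again a cylindrical approximant $X^\varepsilon$ with $\|X-X^\varepsilon\|_{L^p}<\varepsilon$, for which $X_n^\varepsilon:=\E_\mm[X^\varepsilon\mid\mathcal F_n]=X^\varepsilon$ for large $n$ so that $X_n^\varepsilon\to X^\varepsilon$ $\mm$-a.e., I would split $X_n-X=(X_n-X_n^\varepsilon)+(X_n^\varepsilon-X^\varepsilon)+(X^\varepsilon-X)$ and apply the displayed bound to $Z=X-X^\varepsilon$. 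Setting $g:=\limsup_n|X_n-X|_\Y$, this gives $g\le\sup_n|\E_\mm[X-X^\varepsilon\mid\mathcal F_n]|_\Y+|X^\varepsilon-X|_\Y$, whence for every $\eta>0$ the estimate $\mm(g>2\eta)\le C\varepsilon/\eta+\varepsilon^p/\eta^p$ follows from the maximal inequality and Chebyshev; letting $\varepsilon\downarrow0$ forces $\mm(g>2\eta)=0$ for all $\eta$, i.e.\ $g=0$ $\mm$-a.e.

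The main obstacle is the a.e.\ convergence, since vector-valued martingales need not converge in an arbitrary Banach space (this generally requires the Radon--Nikod\'ym property). The point that makes it work here is that the martingale is \emph{closed} by the fixed element $X\in L^p$: this lets me dominate the $\Y$-valued process pointwise by the scalar martingale $\E_\mm[|Z|_\Y\mid\mathcal F_n]$ and thereby replace any genuinely vector-valued maximal inequality by the classical scalar Doob inequality, after which the approximation by $\mathcal F_m$-measurable simple maps closes the argument.
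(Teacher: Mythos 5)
Your proof is correct. Note, however, that the paper does not actually prove this statement: it is quoted as a particular case of Doob's Martingale Convergence Theorem for Banach-valued maps, with a citation to Stroock's book (Theorem 6.1.12), and no argument is given. Your proposal is therefore a genuinely different (and self-contained) route. Its key point, which you identify explicitly, is that the martingale here is \emph{closed} by the fixed element $X$: this allows you to dominate the vector-valued process pointwise, via conditional Jensen, by the scalar martingale $\E_\mm\left[\,|X-X^\eps|_\Y \mid \mathcal F_n\right]$, and then to run the classical scalar Doob weak-type maximal inequality together with the density of $\bigcup_m \mathcal F_m$-measurable simple maps (a density that indeed follows from the algebra-approximation lemma for the finite measure $\mm$, and from separability of $\Y$). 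In this way you completely bypass the general theory of Banach-valued martingale convergence, which for non-closed $L^1$-bounded martingales requires the Radon--Nikod\'ym property of the target space, whereas closed martingales converge in any Banach space --- exactly the case at hand. What the citation buys the paper is brevity and a statement in its sharpest known form; what your argument buys is transparency: every ingredient (contraction property of conditional expectation, H\"older on the finite measure space, Chebyshev, and the eventual stabilization $\E_\mm[X^\eps\mid\mathcal F_n]=X^\eps$ for $n$ large) is elementary, and the quantitative estimate $\mm(g>2\eta)\le C\eps/\eta+\eps^p/\eta^p$ closes both the a.e.\ and the $L^p$ statements cleanly. I see no gap: the only points requiring care --- that Doob's maximal inequality and conditional expectations behave identically for a finite (not necessarily probability) measure, and that the supremum $\sup_n|\E_\mm[Z\mid\mathcal F_n]|_\Y$ is measurable as a countable supremum --- are all unproblematic.
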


In general, the collection of sigma-algebras $(\mathcal{B}_N)_{N \in \cN}$  associated with a segmentation according to Definition \ref{def:segm} is not a filtration since it fails to be ordered by inclusion (recall Remark \ref{rem:divideetimpera}). 
However, it is always possible to extract from $(\mathcal{B}_N)_{N \in \cN}$ a filtration still satisfying item (4) in Definition \ref{def:segm}. More precisely we have the following result. 
\begin{lemma}[Cofinal filtrations]
\label{le:stroock}  Let $\cN \subset \N$ be an unbounded directed subset w.r.t.~$\prec$. Then there exists a totally ordered cofinal sequence $(b_n)_n \subset \cN$ satisfying
\begin{itemize}
    \item  $b_{n} \precneq b_{n+1}$ for every $n \in \N$,
    \item for every $N\in \N$ there exists $n\in \N$ such that $N\mid b_n.$
\end{itemize}
In particular,
for every $\cN$-refined standard Borel measure space $(\Omega, \cB, \mm, (\mathfrak P_N)_{N \in \cN})$ it holds that
$(\cB_{b_n})_{n \in \N}$ is a filtration on $(\Omega, \cB)$,
\begin{equation}\label{eq:goodseq}
\text{for every $N \in \cN$ there exists $n \in \N$ such that } \cB_N \subset \cB_{b_n},
\end{equation}
and 
$\sigma \left ( \left \{ \cB_{b_n} \mid n \in \N \right \} \right ) = \cB$.

For every $p \in [1,+\infty)$ and every separable Banach space $\Y$, we 
thus have that 
\begin{equation}
    \bigcup_{N \in \cN} L^{ p}(\Omega, \cB_N, \mm; \Y) \text{ is dense in } L^{ p}(\Omega, \cB, \mm; \Y).
\end{equation}
\end{lemma}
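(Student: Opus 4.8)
The plan is to construct the sequence $(b_n)_{n\in\N}$ by a recursion that enumerates $\cN$ and, at each step, absorbs one more element into a strictly $\prec$-increasing chain, using directedness to form common multiples and unboundedness to force strict growth. Once the chain is built, every downstream assertion reduces to Remark~\ref{rem:divideetimpera}, the generation property (4) of Definition~\ref{def:segm}, and Doob's martingale convergence Theorem~\ref{theo:stroock}.

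Concretely, since $\cN\subset\N$ is countable I would fix an enumeration $\cN=\{c_k:k\in\N\}$ and define $(b_n)$ recursively by $b_0:=c_0$ and, given $b_n$, choosing $b_{n+1}$ as follows: since $\cN$ is unbounded and $\prec$-directed there is $m\in\cN$ with $b_n\precneq m$, and by directedness there is $b_{n+1}\in\cN$ which is a common $\prec$-upper bound of $m$ and $c_{n+1}$, i.e. $m\mid b_{n+1}$ and $c_{n+1}\mid b_{n+1}$. From $b_n\mid m$ with $b_n<m\le b_{n+1}$ we get $b_n\precneq b_{n+1}$, so $b_0\precneq b_1\precneq\cdots$ is a totally ordered $\prec$-chain, which is the first bullet. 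Moreover $c_{n+1}\mid b_{n+1}$, and together with $c_0=b_0$ this yields $c_k\mid b_k$ for every $k$; as $\{c_k\}=\cN$, every $N\in\cN$ divides some $b_n$, i.e. $(b_n)$ is $\prec$-cofinal in $\cN$, which is the content of the second bullet.

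The remaining claims follow quickly. Because $b_n\mid b_{n+1}$, Remark~\ref{rem:divideetimpera} gives $\cB_{b_n}\subset\cB_{b_{n+1}}$, so $(\cB_{b_n})_n$ is a filtration. Cofinality together with Remark~\ref{rem:divideetimpera} produces, for each $N\in\cN$, an index $n$ with $N\mid b_n$ and hence $\cB_N\subset\cB_{b_n}$, which is \eqref{eq:goodseq}. For the generated $\sigma$-algebra I would sandwich: on one hand $\sigma(\{\cB_{b_n}\})\subset\sigma(\{\cB_N:N\in\cN\})=\cB$ by property (4) of Definition~\ref{def:segm}; on the other, \eqref{eq:goodseq} forces $\bigcup_{N\in\cN}\cB_N\subset\bigcup_n\cB_{b_n}$, whence $\cB=\sigma(\{\cB_N\})\subset\sigma(\{\cB_{b_n}\})$, and equality holds.

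Finally, for density I would apply Theorem~\ref{theo:stroock} to the filtration $\mathcal F_n:=\cB_{b_n}$, which is admissible precisely because $\sigma(\{\cB_{b_n}\})=\cB$. Given $X\in L^p(\Omega,\cB,\mm;\Y)$, the martingale $X_n:=\E_\mm[X\mid\cB_{b_n}]$ converges to $X$ in $L^p(\Omega,\cB,\mm;\Y)$; since each $X_n$ is $\cB_{b_n}$-measurable and $b_n\in\cN$, we have $X_n\in L^p(\Omega,\cB_{b_n},\mm;\Y)\subset\bigcup_{N\in\cN}L^p(\Omega,\cB_N,\mm;\Y)$, so this union is dense. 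The main obstacle is the diagonal construction of $(b_n)$: one must interleave the two requirements, namely strict $\prec$-growth (from unboundedness) and cofinality (absorbing the enumeration via directedness), inside a single recursion, and it is this combinatorial step, rather than the measure-theoretic consequences, that carries the real content.
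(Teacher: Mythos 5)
Your proof is correct and follows essentially the same route as the paper's: the paper likewise fixes an enumeration $(a_n)_n$ of $\cN$ and recursively takes $b_{n+1}$ to be a strict common $\prec$-upper bound of $\{a_{n+1},b_n\}$ in $\cN$ (chosen canonically as a minimum, with well-definedness from unboundedness plus directedness, exactly your combination), and then the filtration, generation, and density claims follow via Remark~\ref{rem:divideetimpera}, property (4) of Definition~\ref{def:segm}, and Theorem~\ref{theo:stroock}, just as you argue. One remark: the second bullet's ``$N\in\N$'' must be read as ``$N\in\cN$'' (it is false as literally written, e.g.\ for $\cN=\{2^k:k\in\N\}$ and $N=3$), and your interpretation as $\prec$-cofinality in $\cN$ is precisely what the paper's proof establishes too.
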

\begin{proof} Since $\cN$ is unbounded and directed, for every finite subset $\mathfrak M \subset \cN$ the quantity
\[ \text{succ}(\mathfrak M) := \min \left \{ N \in \cN \mid M \precneq N\, \forall M \in \mathfrak M \right \}\]
is well defined. Let $(a_n)_n \subset \N$ be an enumeration of $\cN$ and consider the following sequence defined by induction
\[ b_0=a_0, \quad b_{n+1}= \text{succ} \left (\{a_{n+1},b_n\} \right ), \quad n \in \N.\]
Then $b_{n} \precneq b_{n+1}$ for every $n \in \N$ and \eqref{eq:goodseq} holds for $(b_n)_n$ and any $\cN$-refined standard Borel measure space $(\Omega, \cB, \mm, (\mathfrak P_N)_{N \in \cN})$.
\end{proof}

In the next Lemma we show that, given two distinct points $\omega, \omega''$, they can always be separated by some partition $\mathfrak P_N$ for $N \in \cN$ sufficiently large.

\begin{lemma}[Separation property]
\label{le:separation}
Let $(\Omega, \cB, \mm, (\mathfrak P_N)_{N \in \cN})$ be a $\cN$-refined standard Borel measure space.
Then there exists $\Omega_0 \in \cB$ with $\mm(\Omega_0)=0$ such that for every $\omega', \omega'' \in \Omega \setminus \Omega_0$, $\omega' \ne \omega''$ there exists 
$M \in \cN$ such that for every $N\in \cN$, $M\mid N$ 
there exist  $k', k'' \in I_{N}$, $k' \ne k''$ with $\omega' \in \Omega_{N, k'}$ and $\omega'' \in \Omega_{N,k''}$.
\end{lemma}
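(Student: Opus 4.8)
The plan is to reduce the statement to an application of the martingale convergence theorem along a suitable cofinal filtration, and then to propagate the separation obtained at a single scale to all larger compatible scales by means of the refinement property of the segmentation. Since $\cN$ is only directed with respect to $\prec$ and not totally ordered, the family $(\cB_N)_{N\in\cN}$ is not a filtration, so the first step is to invoke Lemma \ref{le:stroock} to extract a cofinal sequence $(b_n)_n\subset\cN$ with $b_n\precneq b_{n+1}$, for which $(\cB_{b_n})_{n\in\N}$ is a genuine filtration satisfying $\sigma(\{\cB_{b_n}:n\in\N\})=\cB$.

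Next, using that $(\Omega,\cB)$ is standard Borel (hence isomorphic to a Borel subset of a Polish space by Definition \ref{def:sbs}), I would fix a bounded, $\cB$-$\mathcal B(\R)$-measurable injection $X:\Omega\to\R$; being bounded, $X\in L^1(\Omega,\cB,\mm;\R)$. Applying Theorem \ref{theo:stroock} to the martingale $X_n:=\E_\mm[X\mid\cB_{b_n}]$ produces a set $\Omega_0\in\cB$ with $\mm(\Omega_0)=0$ such that $X_n(\omega)\to X(\omega)$ for every $\omega\in\Omega\setminus\Omega_0$. The decisive observation is that, since $\cB_{b_n}=\sigma(\mathfrak P_{b_n})$ is generated by the finite equi-measured partition $\mathfrak P_{b_n}=\{\Omega_{b_n,k}\}_{k\in I_{b_n}}$ (item (2) of Definition \ref{def:segm} guarantees each cell has positive measure), the conditional expectation $X_n$ is constant on each cell $\Omega_{b_n,k}$, where it equals the average of $X$ over that cell.

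Now fix $\omega',\omega''\in\Omega\setminus\Omega_0$ with $\omega'\ne\omega''$; by injectivity of $X$ we have $X(\omega')\ne X(\omega'')$. If $\omega'$ and $\omega''$ were to lie in the same cell of $\mathfrak P_{b_n}$ for every $n$, then $X_n(\omega')=X_n(\omega'')$ for every $n$, and passing to the limit would force $X(\omega')=X(\omega'')$, a contradiction. Hence there is some $n$ for which $\omega'$ and $\omega''$ belong to distinct cells of $\mathfrak P_{b_n}$, and I set $M:=b_n\in\cN$. Finally, for any $N\in\cN$ with $M\mid N$, writing $N=KM$, the refinement property (item (3) of Definition \ref{def:segm}) expresses each cell $\Omega_{M,m}$ as the disjoint union $\bigcup_{k=0}^{K-1}\Omega_{N,mK+k}$; since $\omega',\omega''$ sit in different coarse cells, their fine indices lie in different blocks and are therefore distinct, giving $k'\ne k''$ with $\omega'\in\Omega_{N,k'}$, $\omega''\in\Omega_{N,k''}$, exactly as required.

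The main obstacle is organizational rather than computational: one must first replace the merely directed index set $\cN$ by the cofinal filtration $(b_n)$ so that the martingale machinery of Theorem \ref{theo:stroock} applies, and then verify that separation achieved at the single scale $b_n$ propagates upward to every $N$ divisible by $b_n$ — this is precisely the point where the nestedness encoded in item (3) of the segmentation is used. As an alternative that avoids martingales entirely, one could instead note that $\sigma(\{\cB_{b_n}\})=\cB$ separates points in the standard Borel space, so the decreasing intersection $\bigcap_n\Omega_{b_n,k_n(\omega)}$ of the cells containing $\omega$ reduces to $\{\omega\}$; this yields the separation for all distinct $\omega',\omega''$ and shows that $\Omega_0$ may in fact be taken empty.
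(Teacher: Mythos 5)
Your main argument is correct and follows the same skeleton as the paper's proof: extract the cofinal filtration $(\cB_{b_n})_n$ via Lemma \ref{le:stroock}, apply the martingale convergence Theorem \ref{theo:stroock}, use that conditional expectations with respect to $\sigma(\mathfrak P_{b_n})$ are constant on the (positive-measure) cells, and propagate the separation from the scale $M=b_n$ to every $N\in\cN$ with $M\mid N$ through the refinement property (3) of Definition \ref{def:segm}. The genuine difference in your main route is the choice of test function: the paper takes a countable family of \emph{continuous} functions separating the points of $\Omega$ (via Bogachev) and applies Doob's theorem to each member, collecting countably many null sets, whereas you apply it once to a single bounded Borel injection $X:\Omega\to\R$, so a single null set suffices. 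This is a clean simplification; note only that the existence of such an injection is the Borel isomorphism (Kuratowski) theorem — every standard Borel space is Borel isomorphic to a Borel subset of $[0,1]$ — and not a direct consequence of Definition \ref{def:sbs} alone, as your parenthetical suggests; it is a classical fact, but it should be invoked by name. Your closing alternative is also correct and is genuinely different and more elementary: since $\cB$ is generated by the countable family of cells $\bigcup_n \mathfrak P_{b_n}$ and contains all singletons, and since every set in a countably generated $\sigma$-algebra is a union of the atoms of its generating family, the atom $\bigcap_n \Omega_{b_n,k_n(\omega)}$ containing $\omega$ must reduce to $\{\omega\}$; hence any two distinct points are separated at some scale $b_n$, with no exceptional set. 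This avoids martingales entirely and proves the lemma with $\Omega_0=\emptyset$, showing that the null set in the statement (and in the paper's proof, where it arises from the a.e.\ convergence) is an artifact of the method rather than a necessity; the weaker statement is, of course, all that the paper needs downstream.
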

\begin{proof} 
Let $(b_n)_n\subset \cN$ be a totally ordered cofinal sequence 
as in Lemma \ref{le:stroock} and let $\tau$ be a Polish topology on $\Omega$ such that $\cB$ coincides with the Borel sigma algebra generated by $\tau$. By \cite[Proposition 6.5.4]{Bogachev} there exists a countable family $\mathcal{F}$ of $\tau$-continuous functions $f: \Omega \to [0,1]$ separating the points of $\Omega$, meaning that for every $\omega', \omega'' \in \Omega$, $\omega' \ne \omega''$ there exists $f \in \mathcal{F}$ such that $f(\omega') \ne f(\omega'')$. Since $\mathcal{F} \subset L^2(\Omega, \cB, \mm; \R)$, by Theorem \ref{theo:stroock} with $\mathcal{F}_n:=\cB_{b_n}$, for every $f \in \mathcal{F}$ there exists a $\mm$-negligible set $\Omega_f$ such that
\[ \lim_{n \to + \infty} \E_\mm \left [ f \mid\sigma \left ( \mathfrak P_{b_n} \right ) \right ](\omega) = f(\omega) \quad \forall \omega \in \Omega \setminus \Omega_f.\]
Let $\Omega_0 := \cup_{f \in \mathcal{F}} \Omega_f$ and let $\omega', \omega'' \in \Omega \setminus \Omega_0$, $\omega' \ne \omega''$. We can find $f \in \mathcal{F}$ such that $f(\omega') \ne f(\omega'')$. 
Thus there exists $M=b_m \in \cN$ 
such that
\[ \E_\mm \left [ f \mid\sigma \left ( \mathfrak P_{M} \right ) \right ](\omega') \ne \E_\mm \left [ f \mid\sigma \left ( \mathfrak P_{M} \right ) \right ](\omega'').\]
Since $\E_\mm \left [ f \mid\sigma \left ( \mathfrak P_{M} \right ) \right ]$ is constant on every $\Omega_{M,k}$, $k \in I_{M}$, we conclude that the points $\omega'$ and $\omega''$ belong to different elements of $\mathfrak P_{M}$,
and therefore they also belong to different elements of 
$\mathfrak P_{N}$ for every $N\in \cN$ multiple of $M$.
\end{proof}

\begin{proposition}[Segmentation preserving isomorphisms]
\label{prop:isopotente}Let $(\Omega, \cB, \mm, (\mathfrak P_N)_{N \in \cN})$ and $(\Omega', \cB', \mm', (\mathfrak P'_N)_{N \in \cN})$ be $\cN$-refined standard Borel measure spaces such that $\mm(\Omega) = \mm'(\Omega')$. Then there exist two measurable functions $\varphi: \Omega \to \Omega'$ and $\psi: \Omega'\to \Omega$ such that for every $N \in \cN$ and every $I \subset I_N$ it holds
\[ \psi_I \circ \varphi_I = \ii_{\Omega_I} \text{ $\mm_I$-a.e. in $\Omega_I$}, \quad \varphi_I \circ \psi_I = \ii_{\Omega'_I} \text{ $\mm_I'$-a.e. in $\Omega_I'$}, \quad (\varphi_I)_\sharp \mm_I = \mm_I', \quad (\psi_I)_\sharp \mm_I'=\mm_I,\]
where the subscript $I$ denotes the restriction to $\cup_{k \in I} \Omega_{N,k}$ or $\cup_{k \in I} \Omega'_{N,k}$.
\end{proposition}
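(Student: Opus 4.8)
The plan is to encode $\mm$-almost every point of $\Omega$ (and $\mm'$-almost every point of $\Omega'$) by its \emph{itinerary} through a nested sequence of partitions, and then to transport $\Omega$ onto $\Omega'$ by matching itineraries. First I would invoke Lemma \ref{le:stroock} to fix a totally ordered cofinal sequence $(b_n)_n\subset\cN$ with $b_n\precneq b_{n+1}$ and such that every $N\in\cN$ divides some $b_n$. By the refining property, item (3) of Definition \ref{def:segm}, the partitions $(\mathfrak P_{b_n})_n$ and $(\mathfrak P'_{b_n})_n$ are then \emph{nested}, with the parent--child relation between indices given by the same formula for the two families. I would then introduce the compact metrizable (hence standard Borel) space $\mathcal K:=\prod_n I_{b_n}$ and the itinerary maps
\[
\kappa\colon\Omega\to\mathcal K,\quad \kappa(\omega):=(k_n(\omega))_n \ \text{ where }\ \omega\in\Omega_{b_n,k_n(\omega)},
\]
and likewise $\kappa'\colon\Omega'\to\mathcal K$. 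Each coordinate is constant on the cells of the corresponding partition, so $\kappa,\kappa'$ are Borel.

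Next I would establish two properties. For \emph{essential injectivity}, the Separation Lemma \ref{le:separation}, applied to both spaces, produces negligible sets $\Omega_0,\Omega_0'$ outside of which any two distinct points lie in different cells of some $\mathfrak P_{b_n}$, hence receive different itineraries; thus $\kappa,\kappa'$ are injective off $\Omega_0,\Omega_0'$. For the \emph{equality of laws}, the preimages $\kappa^{-1}(\{x_n=k\})=\Omega_{b_n,k}$ and $(\kappa')^{-1}(\{x_n=k\})=\Omega'_{b_n,k}$ have equal mass $\mm(\Omega)/b_n=\mm'(\Omega')/b_n$ by item (2) of Definition \ref{def:segm} and the hypothesis $\mm(\Omega)=\mm'(\Omega')$; since both pushforwards are concentrated on the admissible itineraries, where by nesting the sets $\{x_n=k\}$ form a generating $\pi$-system, this forces $\kappa_\sharp\mm=\kappa'_\sharp\mm'=:\mu$. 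Because an injective Borel map between standard Borel spaces is a Borel isomorphism onto its (Borel) image (Lusin--Souslin; see, e.g.,~\cite{Bogachev}), $\kappa$ and $\kappa'$ restrict to Borel isomorphisms of $\Omega\setminus\Omega_0$, $\Omega'\setminus\Omega_0'$ onto Borel sets $K,K'\subset\mathcal K$, each of full $\mu$-measure.

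I would then define $\varphi:=(\kappa')^{-1}\circ\kappa$ on the full-measure set $\kappa^{-1}(K\cap K')$ and $\psi:=\kappa^{-1}\circ\kappa'$ on $(\kappa')^{-1}(K\cap K')$, extending them arbitrarily (and measurably) on the residual null sets. By construction $\psi\circ\varphi=\ii_\Omega$ and $\varphi\circ\psi=\ii_{\Omega'}$ hold $\mm$- resp.\ $\mm'$-a.e., while $\varphi_\sharp\mm=\mm'$ and $\psi_\sharp\mm'=\mm$ follow from $\kappa_\sharp\mm=\mu=\kappa'_\sharp\mm'$. Since $\varphi$ preserves itineraries, $\varphi(\Omega_{b_n,k})\subset\Omega'_{b_n,k}$ for all $n,k$; as every $N\in\cN$ divides some $b_n$ and, by item (3) of Definition \ref{def:segm}, $\Omega_{N,m}$ and $\Omega'_{N,m}$ are the unions of the level-$b_n$ cells carrying the same indices, this upgrades to $\varphi(\Omega_{N,m})\subset\Omega'_{N,m}$ for every $N\in\cN$, $m\in I_N$ (and symmetrically for $\psi$). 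The restricted identities then follow routinely: because $\varphi$ is globally measure preserving and sends $\Omega_{N,k}$ into $\Omega'_{N,k}$, one has $\varphi^{-1}(\Omega'_{N,k})=\Omega_{N,k}$ up to null sets, so for each $I\subset I_N$ the restriction $\varphi_I$ maps $\Omega_I$ into $\Omega'_I$, satisfies $(\varphi_I)_\sharp\mm_I=\mm'_I$ and $\psi_I\circ\varphi_I=\ii_{\Omega_I}$ a.e., together with the symmetric statements.

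The heart of the argument is the essential injectivity of the itinerary maps: this is exactly where the separation property---and, through its proof, the martingale convergence Theorem \ref{theo:stroock}---is used, and it is what forces us to pass to the totally ordered cofinal sequence $(b_n)_n$ so that the partitions genuinely nest. Everything else is bookkeeping with standard Borel isomorphisms and with the refining property (3), which transfers the level-$b_n$ information down to every $N\in\cN$.
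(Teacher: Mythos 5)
Your proof is correct, but it follows a genuinely different construction from the paper's. The paper first uses Lemma \ref{le:stroock} to reduce to a strictly $\prec$-increasing chain $(b_n)_n$ and to the case where the target is the canonical model $([0,1),\mathcal B([0,1)),\lambda^c,(\mathfrak I_N)_N)$ of Example \ref{ex:canon}; it then builds $\varphi$ as the \emph{pointwise limit} of the partition-compatible isomorphisms $\varphi_n$ supplied by Lemma \ref{le:mpm} (hence ultimately by Theorem \ref{theo:kura}), the limit existing because $b_n\precneq b_{n+1}$ forces $\sum_n b_n^{-1}<\infty$ so that $(\varphi_n(\omega))_n$ is Cauchy in $[0,1)$; the pushforward identities are checked by testing against continuous functions, and essential injectivity comes from Lemma \ref{le:separation} together with the countable set $J$ of interval endpoints. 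You instead bypass Lemma \ref{le:mpm} and the metric limit argument entirely: you code both spaces symmetrically into the symbolic space $\prod_n I_{b_n}$ via itinerary maps, get essential injectivity from Lemma \ref{le:separation} exactly as the paper does, get equality of the two image laws from the equal-mass property (item (2) of Definition \ref{def:segm}) via a Dynkin argument on the nested cylinder sets (restricted to the closed set of admissible itineraries, where together with the whole space they do form a generating $\pi$-system), and then invert through the Lusin--Souslin theorem. What the paper's route buys is reuse of already-established building blocks and a concrete interval model; what your route buys is symmetry between $\Omega$ and $\Omega'$ (no two-sided reduction through $[0,1)$), no Cauchy-limit bookkeeping or endpoint set $J$, and an explicit treatment of the passage from the cofinal chain $(b_n)_n$ back to arbitrary $N\in\cN$ via item (3) of Definition \ref{def:segm} --- a step the paper leaves implicit. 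The cost is the appeal to Lusin--Souslin, a descriptive-set-theoretic input of comparable depth to the isomorphism theorem (Theorem \ref{theo:kura}) underlying the paper's Lemma \ref{le:mpm}; and, as in the paper, your global maps only preserve cells up to the negligible sets on which they are arbitrarily extended, which is harmless since all the asserted identities are almost-everywhere or pushforward statements.
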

\begin{proof} By Lemma \ref{le:stroock}, it is enough to prove the statement in case $\cN = (b_n)_n$, where $(b_n)_n \subset \N$ is strictly $\prec$-increasing sequence and $(\Omega', \cB', \mm', (\mathfrak P'_N)_{N \in \cN})$ is $([0,1), \mathcal{B}([0,1)), \lambda^c, (\mathfrak{I}_N)_{N \in \cN})$ as in Example \ref{ex:canon} with $c= \mm(\Omega)$. By Lemma \ref{le:mpm}, we can find for every $n \in \N$ two measurable maps $\varphi_n: \Omega \to [0,1)$ and $\psi_n:[0,1) \to \Omega$ satisfying the thesis of Lemma \ref{le:mpm} for the standard Borel spaces $(\Omega, \cB)$ and $([0,1), \mathcal{B}([0,1))$ endowed with nonatomic, positive and finite measures $\mm$ and $\lambda^c$ respectively and the $\mm-\lambda^c$ compatible $b_n$-partitions of $(\Omega, \cB)$ and $([0,1), \mathcal{B}([0,1)))$ given by $\mathfrak P_{b_n}$ and $\mathfrak I_{b_n}$, where we recall from Example \ref{ex:canon} that $\mathfrak I_{b_n}= (I_{b_n,k})_{k \in I_{b_n}}$ with $I_{b_n,k}=[k/b_n, (k+1)/b_n)$. Since $\sum_n b_n^{-1} < + \infty$, for every $\omega \in \Omega$ the sequence $(\varphi_n(\omega))_n \subset [0,1)$ is Cauchy, hence converges. We thus have the existence of a measurable map $\varphi: \Omega \to [0,1)$ such that
\[ \varphi(\omega) = \lim_n \varphi_n (\omega) \quad \forall \omega \in \Omega.\]
If $n \in \N$, $k \in I_{b_n}$ and $ \xi \in \rmC_b(I_{b_n,k})$ then
\begin{align*}
 \int_{I_{b_n, k}} \xi  \de \varphi_\sharp \mm &= \int_{\Omega_{b_n,k}}  \xi(\varphi(\omega)) \de \mm(\omega) = \lim_m \int_{\Omega_{b_n,k}}  \xi(\varphi_m(\omega)) \de \mm(\omega) \\
& = \lim_m \int_{I_{b_n,k}}  \xi \de \lambda^c = \int_{I_{b_n,k}}  \xi\de \lambda^c,
\end{align*}
since for $m$ sufficiently large $(\varphi_m)_\sharp \mm|_{\Omega_{b_n,k}}=\lambda^c|_{I_{b_n,k}}$ by Lemma \ref{le:mpm}. This shows that $\varphi_\sharp \mm |_{\Omega_{b_n,k}} = \lambda^c|_{I_{b_n,k}}$ for every $k \in I_{b_n}$ and every $n \in \N$. To conclude it is enough to show that $\varphi$ is $\mm$-essentially injective. Let $\Omega_0 \subset \Omega$ be the $\mm$-negligible subset of $\Omega$ given by Lemma \ref{le:separation} and let $\Omega_1:= \varphi^{-1}(J)$, where
\[ J := \left \{ k/b_n \mid k \in I_{b_n}, \, n \in \N \right \} \subset [0,1).\]
Since $\lambda^c(J)=0$, then $\mm(\Omega_1)=0$; let $\omega', \omega'' \in \Omega \setminus (\Omega_0 \cup \Omega_1)$. Then there exists $M \in \N$ such that $\omega'$ and $\omega''$ belong to different elements of $\mathfrak P_{b_n}$ for every $n \ge M$. By \eqref{eq:subset} and Lemma \ref{le:separation}, we can find $k',k'' \in I_{b_M}$ with $k \ne k'$ such that, $\varphi_n(\omega') \in I_{b_{M}, k'}$ and $\varphi_n(\omega'') \in I_{b_{M}, k''}$ for every $n \ge M$. Thus $\varphi(\omega') \in \overline{I_{b_{M}, k'}}$ and $\varphi(\omega') \in \overline{I_{b_{M}, k''}}$; however, since
\[ \overline{I_{b_{M}, k'}} \cap \overline{I_{b_{M}, k''}} \subset J,\]
it must be that $\varphi(\omega') \ne \varphi(\omega'').$
\end{proof}
\subsection{Approximation of couplings by using measure-preserving isomorphisms}\label{sec:laseconda}
 If $X$ is a Polish space, we denote by $\prob(X)$ the space of Borel probability measures on $X$ which is endowed with the weak (or narrow) topology: a sequence $(\mu_n)_n \subset \prob(X)$ converges to $\mu \in \prob(X)$ if 
\[ \lim_n \int_X \varphi \de \mu_n = \int_X \varphi \de \mu \]
for every $\varphi:X \to \R$ continuous and bounded. In this case, we write $\mu_n \to \mu$ in $\prob(X)$. 

If $X,Y$ are Polish spaces and $(\mu,\nu)\in\prob(X)\times\prob(Y)$, we define the set of admissible transport plans
\begin{equation}\label{def:admplans}
 \Gamma(\mu, \nu) := \left  \{ \ggamma \in \prob(X \times Y) \mid \pi^{1}_{\sharp} \ggamma = \mu \, , \, \pi^{2}_{\sharp} \ggamma = \nu \right \},
\end{equation}
where $\pi^i$, $i=1,2$, denotes the projection on the $i$-th component and we call $\pi^{i}_{\sharp}\ggamma$ the $i$-th marginal of $\ggamma$.

\begin{definition}[Wasserstein spaces]
Let $\X$ be a separable Banach space,  $\mu\in\prob(\X)$, $p\ge 1$. We define the space
\begin{equation}\label{condTanTX}
\prob_p(\X) := \{ \mu \in \prob(\X) \mid \int_\X |x|^p \de \mu(x) < + \infty \}.
\end{equation}
Given $\mu,\nu\in\prob_p(\X)$, we define the $L^p$-Wasserstein distance $W_p$ by
\begin{align} \label{eq:w21} W_p^p(\mu, \nu) &:= \inf \left \{ \int_{\X \times \X} |x-y|^p \de \ggamma(x,y) \mid \ggamma \in \Gamma(\mu, \nu) \right \}.
\end{align}
We denote by $\Gamma_o(\mu, \nu)$ the (non-empty, compact and convex) subset of admissible plans in $\Gamma(\mu, \nu)$ realizing the infimum in
\eqref{eq:w21}.
\end{definition}

We recall that $(\prob_p(\X), W_p)$ is a complete and separable metric space. Moreover, if $(\mu_n)_{n\in\N}\subset\prob_p(\X)$ and $\mu\in\prob_p(\X)$, the following holds (see \cite[Prop. 7.1.5,  Lem. 5.1.7]{ags})
\begin{equation}
  \label{eq:important}
  \mu_n\to\mu\text{ in }\prob_p(\X),\text{ as }n\to+\infty \quad\Longleftrightarrow\quad\begin{cases}\mu_n\to\mu \text{ in }\prob(\X),\\
    \int_\X |x|^p \de \mu_n\to\int_\X |x|^p \de \mu,
  \end{cases}
  \text{ as }n\to+\infty.\\
\end{equation}
We refer e.g.~to \cite[Chapter 7]{ags} for a more comprehensive introduction to Wasserstein distances. 

The following result is an application of \cite[Theorem
1.1]{Brenier-Gangbo03}.
We will use the following notation: if $\X_1$ and $\X_2$ are sets and $X_1:\X_1\to \Y_1$, $X_2:\X_2\to\Y_2$ we denote by $X_1\otimes
X_2:\X_1\times \X_2\to \Y_1\times \Y_2$ the
map
$(x_1,x_2)\mapsto (X_1(x_1),X_2(x_2))$. 
\begin{theorem}[Approximation of bistochastic couplings]
\label{thm:gangbo}
Let $(\Omega, \cB, \P, (\mathfrak P_N)_{N \in \cN})$ be a $\cN$-refined standard Borel probability space.
Then for every $\ggamma \in \Gamma(\P, \P)$ there exist a totally ordered strictly increasing sequence $(N_n)_n \subset \cN$ and maps $g_n \in \rmS(\Omega, \cB, \P; \cB_{N_n})$ such that, for every separable Banach spaces  $\mathsf{Z}, \mathsf{Z}'$ and every $Z \in L^0(\Omega, \cB, \P; \mathsf{Z})$, $Z' \in L^0(\Omega, \cB, \P; \mathsf{Z}')$, it holds
\begin{equation}\label{eq:gangboconv}
  (Z \otimes Z')_\sharp (\ii_{\Omega}, g_n)_\sharp \P \to
 (Z\otimes Z')_\sharp \ggamma
   \text{ in } \prob(\mathsf{Z}\times \mathsf{Z}').
\end{equation}
\end{theorem}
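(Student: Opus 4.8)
The plan is to reduce to the canonical model space, to build each $g_n$ as the lifting of a permutation of the blocks of a sufficiently fine partition, and finally to upgrade the block-level convergence so produced into the push-forward convergence \eqref{eq:gangboconv} for \emph{all} measurable $Z,Z'$ at once.

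\emph{Step 1 (reduction to the canonical space).} First I would apply Proposition \ref{prop:isopotente} to $(\Omega,\cB,\P,(\mathfrak P_N)_{N\in\cN})$ and the canonical space $([0,1),\mathcal B([0,1)),\lambda^c,(\mathfrak I_N)_{N\in\cN})$ of Example \ref{ex:canon} with $c=\P(\Omega)=1$, obtaining a.e.\ mutually inverse, measure-preserving maps $\varphi:\Omega\to[0,1)$ and $\psi:[0,1)\to\Omega$ sending $\Omega_{N,k}$ into $I_{N,k}$ and $I_{N,k}$ into $\Omega_{N,k}$ for all $N\in\cN$. If $\tilde g_n$ is measure-preserving and $\sigma(\mathfrak I_{N_n})$-measurable on $[0,1)$, then $g_n:=\psi\circ\tilde g_n\circ\varphi$ belongs to $\rmS(\Omega,\cB,\P;\cB_{N_n})$, and using $\varphi\circ\psi=\ii$ a.e.\ one checks
\[
(\ii_\Omega,g_n)_\sharp\P=(\psi\otimes\psi)_\sharp(\ii_{[0,1)},\tilde g_n)_\sharp\lambda^c,\qquad \ggamma=(\psi\otimes\psi)_\sharp\tilde\ggamma,\quad \tilde\ggamma:=(\varphi\otimes\varphi)_\sharp\ggamma .
\]
Composing with $Z\otimes Z'$ and absorbing $\psi$ into $Z,Z'$ shows that it is enough to prove the theorem in the canonical case, where $\tilde\ggamma\in\Gamma(\lambda^c,\lambda^c)$ is bistochastic; I henceforth assume the space is canonical and keep the notation $\ggamma,g_n,Z,Z'$.

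\emph{Step 2 (construction of the maps).} For $N\in\cN$ the numbers $p^N_{kl}:=N\,\ggamma(I_{N,k}\times I_{N,l})$ form a doubly stochastic matrix. Following the strategy of \cite[Theorem 1.1]{Brenier-Gangbo03} I would fix $N'=KN\in\cN$ with $N\precneq N'$, subdivide each $I_{N,k}$ into its $K$ children at level $N'$, and round $(Kp^N_{kl})_{k,l}$ to a nonnegative integer matrix $(a_{kl})$ with the same margins $K$ and $|a_{kl}-Kp^N_{kl}|<1$ (integrality of the transportation polytope). Matching children so that exactly $a_{kl}$ children of group $k$ are sent to group $l$ defines a permutation of $I_{N'}$, which Corollary \ref{cor:isomor} lifts to a measure-preserving, $\sigma(\mathfrak I_{N'})$-measurable map $g$; its graph coupling gives mass $a_{kl}/N'$ to $I_{N,k}\times I_{N,l}$, so the total deviation from $\ggamma$ at scale $N$ is $<N/K$. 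Choosing, via Lemma \ref{le:stroock}, a $\precneq$-increasing cofinal sequence $(b_n)_n$ and then $N_n\in\cN$ with $b_n\mid N_n$, $N_{n-1}\precneq N_n$ and $N_n$ large enough that this deviation at scale $b_n$ is $<1/n$, I obtain maps $g_n$ and couplings $\gamma_n:=(\ii,g_n)_\sharp\lambda^c$. Since $b_n\mid b_{n+1}$ and $(b_n)$ is cofinal, any fixed $M\in\cN$ satisfies $M\mid b_n\mid N_n$ for all large $n$, whence summing over children yields
\[
\gamma_n(I_{M,i}\times I_{M,j})\longrightarrow \ggamma(I_{M,i}\times I_{M,j})\qquad\text{for every }M\in\cN,\ i,j\in I_M .
\]

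\emph{Step 3 (upgrade of the convergence).} Now fix measurable $Z:[0,1)\to\mathsf Z$ and $Z':[0,1)\to\mathsf Z'$. Each $(Z\otimes Z')_\sharp\gamma_n$ has the fixed, hence tight, marginals $Z_\sharp\lambda^c$ and $Z'_\sharp\lambda^c$, so the family is tight and, since products span a subalgebra dense in $\rmC_b$ on compacta (Stone--Weierstrass), it suffices to test against $\phi\otimes\phi'$ with $\phi\in\rmC_b(\mathsf Z)$, $\phi'\in\rmC_b(\mathsf Z')$. Writing $u:=\phi\circ Z$, $u':=\phi'\circ Z'\in L^\infty(\lambda^c)$, I would approximate them in $L^2(\lambda^c)$ by $\sigma(\mathfrak I_M)$-measurable functions $\bar u,\bar u'$ with $\|\bar u\|_\infty\le\|u\|_\infty$, $\|\bar u'\|_\infty\le\|u'\|_\infty$ (conditional expectations along the cofinal filtration of Lemma \ref{le:stroock}, convergent by Theorem \ref{theo:stroock}). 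For these block functions $\int\bar u(x)\bar u'(y)\,\de\gamma_n\to\int\bar u(x)\bar u'(y)\,\de\ggamma$ by the block-mass convergence above, while Cauchy--Schwarz together with the fact that both marginals of every $\gamma_n$ and of $\ggamma$ equal $\lambda^c$ gives the bound, uniform in $n$,
\[
\Big|\int\big(u(x)u'(y)-\bar u(x)\bar u'(y)\big)\,\de\gamma_n(x,y)\Big|\le \|u-\bar u\|_{L^2}\|u'\|_{L^2}+\|\bar u\|_{L^2}\|u'-\bar u'\|_{L^2},
\]
and an $\eps/3$ argument then gives $\int u(x)u'(y)\,\de\gamma_n\to\int u(x)u'(y)\,\de\ggamma$, that is \eqref{eq:gangboconv}.

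\emph{Main obstacle.} I expect the crux to be Step 3 rather than the combinatorial construction: the maps are tailored only to the finite block partitions, whereas \eqref{eq:gangboconv} demands convergence after composition with arbitrary measurable maps into infinite-dimensional Banach spaces, with a single sequence $(g_n)$ serving all $Z,Z'$. What makes the bridge work is that all the couplings $\gamma_n$ share the marginal $\P$ (so the push-forwards are automatically tight) and that this common marginal is exactly what controls the error term above uniformly in $n$; keeping the construction compatible with the segmentation (so that $g_n\in\rmS(\Omega,\cB,\P;\cB_{N_n})$) while still approximating a completely arbitrary $\ggamma$ is the other delicate point.
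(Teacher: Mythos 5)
Your proof is correct, and its overall architecture (reduction to the canonical space via Proposition \ref{prop:isopotente}, construction of block-permutation maps, then an upgrade to arbitrary $Z,Z'$) parallels the paper's, but both key technical steps are carried out by genuinely different means. Where you reconstruct the combinatorial core — the doubly stochastic block matrices $p^N_{kl}$, integer rounding with preserved margins, matching of children and lifting via Corollary \ref{cor:isomor} — the paper simply invokes \cite[Theorem 1.1]{Brenier-Gangbo03} as a black box, which already yields weak convergence of the graph couplings in $\prob([0,1]^2)$ (checking only that the argument there survives replacing $2^n$ by a general chain $b_n \precneq b_{n+1}$); your construction delivers only block-level convergence $\gamma_n(I_{M,i}\times I_{M,j})\to\ggamma(I_{M,i}\times I_{M,j})$, which is weaker but, as you show, sufficient. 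Correspondingly, your upgrade step replaces the paper's: the paper uses Lusin's theorem (compact sets $K_\eps$ on which $Z,Z'$ are continuous, with $\gamma_n([0,1)^2\setminus K_\eps^2)\le 2\eps$ uniformly in $n$ because every $\gamma_n$ has both marginals equal to $\lambda^1$) together with \cite[Proposition 5.1.10]{ags}, whereas you use tightness, reduction to product test functions, martingale block approximation (Theorem \ref{theo:stroock} along the cofinal filtration of Lemma \ref{le:stroock}) and a Cauchy--Schwarz bound that is uniform in $n$ for exactly the same reason, namely that all the $\gamma_n$ and $\ggamma$ share the marginal $\lambda^c$. The paper's route buys brevity: two citations absorb most of your Steps 2 and 3. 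Your route buys self-containedness: apart from the transportation-polytope integrality fact, it relies only on the paper's own structural lemmas, and it never needs to first establish weak convergence of $\gamma_n$ on $[0,1)^2$ before composing with $Z,Z'$. Two minor points: your passage to $g_n\in\rmS(\Omega,\cB,\P;\cB_{N_n})$ requires the same ``up to a $\P$-negligible modification'' caveat that the paper makes explicit, and your tightness/Stone--Weierstrass reduction to test functions $\phi\otimes\phi'$ is precisely the argument the paper delegates to \cite[Lemma 2.3]{SS20} in Corollary \ref{cor:marc}, which you could cite to shorten Step 3.
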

\begin{proof} 
By Lemma \ref{le:stroock} it is not restrictive to assume that $\cN=(b_n)_n$ for a totally ordered strictly increasing sequence $(b_n)_n$, $n\in \N$. We divide the proof in several steps.

\smallskip \noindent
(1) \emph{Let $([0,1), \mathcal{B}([0,1)), \lambda^1, (\mathfrak {I}_N)_{N \in \cN})$ be the $\cN$-refined standard Borel probability space of Example \ref{ex:canon} with $c=1$. Then for every $\ggamma \in \Gamma(\lambda^1, \lambda^1)$, there exist a strictly increasing sequence $(N_n)_n \subset \N$ and maps
$g_n \in \rmS([0,1), \mathcal{B}([0,1)), \lambda^1; \sigma(\mathfrak {I}_{b_{N_n}}))$ such that
\[ (\ii_{[0,1)}, g_n)_\sharp \lambda^1 \to \ggamma \text{ in } \prob([0,1) \times [0,1)).\]}%
Let $\bar{\mathcal{L}}$ be the one dimensional Lebesgue measure restricted to $[0,1]$ and let $\ggamma \in \Gamma(\lambda^1, \lambda^1)$. Let $\mmu \in \prob([0,1] \times [0,1]) $ be an extension of $\ggamma$ to $[0,1] \times [0,1]$ such that $\mmu \in \Gamma(\bar{\mathcal{L}}, \bar{\mathcal{L}})$. In \cite[Theorem 1.1]{Brenier-Gangbo03} it is proven that it is possible to find a strictly increasing sequence $(N_n)_n \subset \N$ and maps $(f_n)_n \subset \rmS([0,1], \mathcal{B}([0,1]), \bar{\mathcal{L}})$ such that for every $n \in \N$ there exists $\sigma_n \in \symg{I_{2^{N_n}}}$ such that
\begin{equation}\label{eq:tsgangbo1}
f_n(x) = x - x_{N_n, k} + x_{N_n, \sigma_n(k)}, \quad x \in I_{2^{N_n}, k}, \quad k \in I_{2^{N_n}},
\end{equation}
with $x_{m,j}$ being the center of $I_{2^m, j}$, and
satisfying 
\begin{equation}\label{eq:tsgangbo2}
(\ii_{[0,1]}, f_n)_\sharp \bar{\mathcal{L}} \to \mmu \text{ in } \prob([0,1]\times [0,1]).
\end{equation}
If we call $g_n$ the restriction of $f_n$ to $[0,1)$, $n \in \N$, we get that $g_n \in \rmS([0,1), \mathcal{B}([0,1)), \lambda^1; \sigma(\mathfrak {I}_{b_{N_n}}))$ for every $n \in \N$ and
\[
(\ii_{[0,1)}, g_n)_\sharp \lambda^1 \to \ggamma \text{ in } \prob([0,1)\times [0,1)). 
\]
This proves the first step only in case $b_n=2^n$. However, it can be easily checked that the proof of \cite[Theorem 1.1]{Brenier-Gangbo03} does not depend on the specific choice of the sequence $b_n$ but it is enough that $b_n \precneq b_{n+1}$ for every $n \in \N$ so that the length of the interval $[k/b_n, (k+1)/b_n]$ goes to $0$ faster than $2^{-n}$ as $n \to + \infty$. This concludes the proof of the first claim.

\medskip \noindent
(2) \emph{Let $([0,1), \mathcal{B}([0,1)), \lambda^1, (\mathfrak {I}_N)_{N \in \cN})$ be the $\cN$-refined standard Borel probability space of Example \ref{ex:canon} with $c=1$. Then for every $\ggamma \in \Gamma(\lambda^1, \lambda^1)$, there exist a strictly increasing sequence $(N_n)_n \subset \N$ and maps $g_n \in \rmS([0,1), \mathcal{B}([0,1)), \lambda^1; \sigma(\mathfrak {I}_{b_{N_n}}))$ such that, for every separable Banach spaces  $\mathsf{Z}, \mathsf{Z}'$ and every $Z \in L^0([0,1), \mathcal{B}([0,1)), \lambda^1; \mathsf{Z})$, $Z' \in L^0([0,1), \mathcal{B}([0,1)), \lambda^1; \mathsf{Z}')$, it holds
  \[ (Z \otimes Z')_\sharp (\ii_{[0,1)}, g_n)_\sharp \lambda^1 \to
     (Z\otimes Z')_\sharp \ggamma
     \text{ in } \prob(\mathsf{Z}\times \mathsf{Z}').\]}%
Let $\ggamma \in \Gamma(\lambda^1, \lambda^1)$ and let $(g_n)_n$ be
the sequence given by claim (1) for $\ggamma$. Let $\mathsf{Z}$ and $\mathsf{Z}'$ be separable Banach spaces and let $Z \in L^0([0,1), \mathcal{B}([0,1)), \lambda^1; \mathsf{Z})$, $Z' \in L^0([0,1), \mathcal{B}([0,1)), \lambda^1; \mathsf{Z}')$. Observe that for every $\eps>0$
there exists a compact set $K_\eps\subset [0,1)$ such that the
restrictions of $Z$ and $Z'$ to $K_\eps$ are continuous in $K_\eps$ and
$\lambda^1([0,1)\setminus K_\eps)<\eps$, so that, setting
$\gamma_n:=(\ii_{[0,1)}, g_n)_\sharp \lambda^1$, $n \in \N$, we have
that $\gamma_n([0,1)^2\setminus K_\eps^2)\le 2\eps$ for every $n\in
\N$. By \cite[Proposition 5.1.10]{ags} and claim (1), $(Z\otimes Z')_\sharp
(\ii_{[0,1)}, g_n)_\sharp \lambda^1 \to
 (Z\otimes Z')_\sharp \ggamma
\text{ in } \prob(\mathsf{Z}\times \mathsf{Z}')$.

\medskip \noindent
(3) \emph{Conclusion} Let $\ggamma \in \Gamma(\P, \P)$ and let
$\varphi:\Omega \to [0,1)$ and $\psi: [0,1) \to \Omega$ be the maps
given by Proposition \ref{prop:isopotente} for the $\cN$-refined
standard Borel probability spaces $(\Omega, \cB, \P, (\mathfrak
P_N)_{N \in \cN})$ and $([0,1), \mathcal{B}([0,1)), \lambda^1,
(\mathfrak {I}_N)_{N \in \cN})$, where the latter is as in Example
\ref{ex:canon} with $c=1$. If we define $\ggamma':=(\varphi
\otimes
\varphi)_\sharp \ggamma$, we have that $\ggamma' \in \Gamma(\lambda^1, \lambda^1)$ so that we can find a strictly increasing sequence $(N_n)_n \subset \N$ and maps $g'_n \in \rmS([0,1), \mathcal{B}([0,1)), \lambda^1; \sigma(\mathfrak {I}_{b_{N_n}}))$ as in step (2). Let us define 
\[ g_n:= \psi \circ g'_n \circ \varphi, \quad n \in \N.\]
Then, up to change each $g_n$ on a $\P$-negligible set of points, we can assume that $g_n \in \rmS(\Omega, \cB, \P; \cB_{b_{N_n}})$. Let $\mathsf{Z}$ and $\mathsf{Z}'$ be separable Banach spaces and let $Z \in L^0(\Omega, \cB, \P; \mathsf{Z})$, $Z' \in L^0(\Omega, \cB, \P; \mathsf{Z}')$. If we define  $Z_0:=Z \circ \psi$ and $Z'_0:=Z' \circ \psi$,  we get that $Z_0 \in L^0([0,1), \mathcal{B}([0,1)), \lambda^1); \mathsf{Z})$, $Z_0' \in L^0([0,1), \mathcal{B}([0,1)), \lambda^1); \mathsf{Z}')$. By step (2) we thus get
\[ (Z_0\otimes Z_0')_\sharp (\ii_{[0,1)}, g'_n)_\sharp \lambda^1 \to (Z_0\otimes Z_0')_\sharp \ggamma' \text{ in } \prob(\mathsf{Z}\times \mathsf{Z}') \]
which is equivalent to \eqref{eq:gangboconv}.
\end{proof}

\begin{remark}\label{rem:sepa} In the setting of Theorem \ref{thm:gangbo}, let  $\varphi:\mathsf{Z}\to [0,+\infty)$, $\varphi':\mathsf{Z}'\to [0,+\infty)$ be Borel functions.

Setting 
\[ \psi(z,z')=\varphi(z)+\varphi'(z'), \quad \text{ for every } (z,z') \in \mathsf{Z} \times \mathsf{Z}',\]
then we have
for every $n\in \N$
\begin{align*}\label{eq:momZ}
    \int \psi \de (Z \otimes Z')_\sharp (\ii_{\Omega}, g_n)_\sharp \P &=
    \int \psi(Z,Z'\circ g_n)\,\d
    \P=
    \int 
    \Big(\varphi(Z)+\varphi'(Z'\circ g_n)\Big)\,\,\d
     \P
    \\&=
    \int \varphi(Z)\,\d\P+
    \int \varphi'(Z')\,\d\P
    \\&=
    \int \varphi(Z(x_1))\,\d\ggamma(x_1,x_2)+
    \int \varphi'(Z'(x_2))\,\d\ggamma(x_1,x_2)
    \\&=
    \int \psi(Z(x_1),Z'(x_2))\,
    \d\ggamma(x_1,x_2)=
    \int \psi \de (Z\otimes Z')_\sharp \ggamma.  
\end{align*} 
As a consequence, if $\mathsf{Z}=\mathsf{Z}'=\X$ for a separable Banach space $\X$ and $Z,Z' \in L^p(\Omega, \cB, \P; \X)$, $p \in [1,+\infty)$, then the convergence in \eqref{eq:gangboconv} holds in $\prob_p(\X^2)$. To prove this, it sufficies to apply \eqref{eq:important} and choose $\psi(z,z'):= |z|_{\X}^p + |z'|_{\X}^p$, $z,z' \in \X$, in
the above identity.
\end{remark}

We deduce two important applications.
\begin{corollary}
\label{cor:marc} Let $(\Omega, \cB, \P, (\mathfrak P_N)_{N \in \cN})$ be a $\cN$-refined standard Borel probability space.
Then for every $\ggamma \in \Gamma(\P, \P)$ there exist a totally ordered strictly increasing sequence $(N_n)_n \subset \cN$ and maps
$g_n \in \rmS(\Omega, \cB, \P; \cB_{{N_n}})$ such that, for every Polish topology $\tau$ on $\Omega$ generating $\cB$, it holds
\[ (\ii_{\Omega}, g_n)_\sharp \P \to \ggamma \text{ in } \prob(\Omega \times \Omega, \tau \otimes \tau),\]
where $\tau \otimes \tau$ is the product topology on $\Omega \times \Omega$.
\end{corollary}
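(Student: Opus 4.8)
The plan is to deduce the statement directly from Theorem \ref{thm:gangbo}, reusing the very same sequence $(N_n)_n$ and maps $(g_n)_n$ it produces, and then to transfer the convergence of the Banach-valued pushforwards down to $(\Omega,\tau)$ itself by a tightness/subsequence argument. Concretely, I would set $\gamma_n:=(\ii_{\Omega},g_n)_\sharp\P$ and first note that $\gamma_n,\ggamma\in\Gamma(\P,\P)$: both marginals equal $\P$, since each $g_n$ is measure-preserving. Now fix a Polish topology $\tau$ on $\Omega$ generating $\cB$ (the maps $g_n$ do not depend on $\tau$). Using the countable family of $\tau$-continuous functions $f_k:\Omega\to[0,1]$ separating the points of $\Omega$ given by \cite[Proposition 6.5.4]{Bogachev} (exactly as in Lemma \ref{le:separation}), I would define $Z:\Omega\to\ell^2$ by $Z(\omega):=(2^{-k}f_k(\omega))_k$. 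This $Z$ is well defined and continuous from $(\Omega,\tau)$ to $\ell^2$ (dominated convergence on the squared coordinates, bounded by $\sum_k4^{-k}$), it is injective because the $f_k$ separate points, and hence Borel, so $Z\in L^0(\Omega,\cB,\P;\ell^2)$ with $\ell^2$ a separable Banach space.

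Next I would apply Theorem \ref{thm:gangbo} with $\mathsf Z=\mathsf Z'=\ell^2$ and $Z'=Z$, obtaining $(Z\otimes Z)_\sharp\gamma_n\to(Z\otimes Z)_\sharp\ggamma$ narrowly in $\prob(\ell^2\times\ell^2)$. To promote this to convergence of the $\gamma_n$ themselves, I would establish that the family $\{\gamma_n\}$ is tight in $\prob(\Omega\times\Omega,\tau\otimes\tau)$: the single measure $\P$ is tight on the Polish space $(\Omega,\tau)$, and if $K_\eps$ is compact with $\P(\Omega\setminus K_\eps)<\eps$, then for every $n$ one has $\gamma_n\big((\Omega\times\Omega)\setminus(K_\eps\times K_\eps)\big)\le 2\P(\Omega\setminus K_\eps)<2\eps$, with $K_\eps\times K_\eps$ compact. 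By Prokhorov's theorem the family $\{\gamma_n\}$ is therefore relatively sequentially compact for the narrow topology on $\prob(\Omega\times\Omega,\tau\otimes\tau)$.

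Finally I would run a subsequence argument to identify all limits. It suffices to show that every subsequence of $(\gamma_n)_n$ admits a further subsequence converging narrowly to $\ggamma$. Given such a subsequence, tightness yields a narrowly convergent sub-subsequence $\gamma_{n_k}\to\eta$. Since $Z\otimes Z$ is continuous, the continuous-mapping property of narrow convergence gives $(Z\otimes Z)_\sharp\gamma_{n_k}\to(Z\otimes Z)_\sharp\eta$; but the full sequence $(Z\otimes Z)_\sharp\gamma_n$ already converges to $(Z\otimes Z)_\sharp\ggamma$, so uniqueness of narrow limits forces $(Z\otimes Z)_\sharp\eta=(Z\otimes Z)_\sharp\ggamma$. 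Because $Z$ is an injective Borel map, $Z\otimes Z$ is a Borel isomorphism onto its (Borel) image by the Lusin--Souslin theorem, whence $(Z\otimes Z)_\sharp$ is injective on probability measures; this gives $\eta=\ggamma$, and the subsequence criterion concludes that $\gamma_n\to\ggamma$ in $\prob(\Omega\times\Omega,\tau\otimes\tau)$. The one genuinely delicate point is precisely this downward transfer: a continuous injective $Z$ need not embed $\Omega$ as a \emph{closed} subset of $\ell^2$, so a naive pull-back of test functions (which would require a Tietze-type extension from the image) is not available; routing through tightness plus injectivity of the pushforward is what cleanly avoids that obstacle, and it is where I would expect to spend the most care.
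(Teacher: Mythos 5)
Your proof is correct, and it departs from the paper's exactly at the step that carries the real content of the corollary: transferring the Banach-valued convergence of Theorem \ref{thm:gangbo} back down to $(\Omega,\tau)$. The paper applies Theorem \ref{thm:gangbo} (with the same $g_n$, chosen independently of $\tau$, just as you do) to scalar test functions $\varphi_1,\varphi_2\in\rmC_b(\Omega,\tau)$ with $\mathsf{Z}=\mathsf{Z}'=\R$, uses the test function $h(x,y)=xy$ to convert this into convergence of $\int\varphi_1(\omega_1)\varphi_2(\omega_2)\de\gamma_n$ to $\int\varphi_1(\omega_1)\varphi_2(\omega_2)\de\ggamma$, and then invokes a Stone--Weierstrass-type criterion (\cite[Lemma 2.3]{SS20}): testing against a unital subalgebra of $\rmC_b(\Omega\times\Omega,\tau\otimes\tau)$ whose initial topology coincides with $\tau\otimes\tau$ suffices for narrow convergence. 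You instead make a single vector-valued application of the theorem, embedding $(\Omega,\tau)$ into $\ell^2$ by the injective continuous map $Z$ built from Bogachev's countable separating family (the same family the paper exploits in Lemma \ref{le:separation}), and then descend by a compactness argument: tightness of $\{\gamma_n\}\subset\Gamma(\P,\P)$ (immediate, since both marginals equal the single tight measure $\P$), Prokhorov's theorem, the continuous-mapping property of narrow convergence, and injectivity of $(Z\otimes Z)_\sharp$ on probability measures, which rests on the Lusin--Souslin theorem. Each route buys something: the paper's argument is shorter but outsources the decisive functional-analytic step to the cited lemma, while yours is self-contained modulo standard measure-theoretic facts (Ulam tightness, Prokhorov, Lusin--Souslin) and correctly identifies, and circumvents, the genuine trap that $Z(\Omega)$ need not be closed in $\ell^2$, so one cannot naively pull back or Tietze-extend test functions through $Z$. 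Both proofs reuse the same sequence $(N_n)_n$ and maps $(g_n)_n$ from Theorem \ref{thm:gangbo}, so the required quantifier structure (maps first, then ``for every Polish topology $\tau$'') is respected in both.
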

\begin{proof} By Theorem \ref{thm:gangbo} and Remark \ref{rem:sepa}, we have the existence of a strictly increasing sequence $(N_n)_n \subset \N$ and maps $g_n \in \rmS(\Omega, \cB, \P; \cB_{{N_n}})$ such that, choosing the separable Hilbert space $\R$, we get
\[
(\varphi_1\otimes \varphi_2)_\sharp (\ii_{\Omega}, g_n)_\sharp \P \to (\varphi_1\otimes \varphi_2)_\sharp \ggamma \text{ in } \prob_2(\R^2)
\]
for every $\varphi_1, \varphi_2 \in \rmC_b(\Omega, \tau) \subset L^2(\Omega,\cB, \P; \R)$. Since the range of $\varphi_i$ is bounded and thus relatively compact, 
by the $\prob(\R^2)$ convergence we get that 
\[ \int_{\Omega \times \Omega} h(\varphi_1(\omega_1), \varphi_2(\omega_2)) \de \ggamma_n(\omega_1, \omega_2) \to \int_{\Omega \times \Omega} h(\varphi_1(\omega_1), \varphi_2(\omega_2)) \de \ggamma(\omega_1, \omega_2)\]
for every continuous function $h: \R^2 \to \R$ where $\ggamma_n = (\ii_{\Omega}, g_n)_\sharp \P$, $n \in \N$. Choosing $h(x,y):= xy$, we get that
\begin{equation}\label{eq:algebra}
 \int_{\Omega \times \Omega} \varphi_1(\omega_1) \varphi_2(\omega_2) \de \ggamma_n(\omega_1, \omega_2) \to \int_{\Omega \times \Omega} \varphi_1(\omega_1) \varphi_2(\omega_2) \de \ggamma(\omega_1, \omega_2) \quad \forall \varphi_1, \varphi_2 \in \rmC_b(\Omega, \tau).
\end{equation}
Let $\mathcal{A} \subset \rmC_b(\Omega, \tau)$ be a unital subalgebra whose induced initial topology on $\Omega$ coincides with $\tau$ (e.g.~the subset of $\mathsf d$-Lipschitz continuous and bounded functions for a complete distance $\mathsf d$ inducing $\tau$). It is easy to check that
\[ \mathcal{A} \otimes \mathcal{A} := \left \{ \sum_{i=1}^n \varphi_1^i \otimes \varphi_2^i \mid (\varphi_1^i)_{i=1}^n, (\varphi_2^i)_{i=1}^n \subset \mathcal{A}, \, n \in \N \right \} \subset \rmC_b(\Omega \times \Omega, \tau \otimes \tau) \]
is a unital subalgebra whose induced initial topology on $\Omega \times \Omega$ coincides with $\tau \otimes \tau$. By \eqref{eq:algebra} we thus have that
\[ \int_{\Omega \times \Omega} \varphi \de \ggamma_n \to \int_{\Omega \times \Omega} \varphi \de \ggamma \quad \forall \varphi \in \mathcal{A} \otimes \mathcal{A}.\]
We conclude by \cite[Lemma 2.3]{SS20}.
\end{proof}

The second part of the following corollary represents a sort of extension of the known result in \cite[Lemma 6.4]{carda} (cf. also \cite[Lemma 5.23 p. 379]{CD18}) to the class of pairs $(X,Y)$ of random variables, where $X$ and $Y$ take values on possibly different separable Banach spaces, with possibly different $p$-integrability. Whenever the joint distribution of two pairs $(X,Y),(X',Y')$ belonging to such a class is equal, we are able to prove the existence of a sequence of measure-preserving maps giving the desired strong approximation result for both the components.
\begin{corollary}\label{cor:from-gangbo}
Let $(\Omega, \cB, \P, (\mathfrak P_N)_{N \in \cN})$ be a
$\cN$-refined standard Borel probability space, 
let $\mathsf{Z}$ and $\mathsf{Z}'$ be separable Banach spaces and let $Z \in L^0(\Omega, \cB, \P; \mathsf{Z})$, $Z' \in L^0(\Omega, \cB, \P; \mathsf{Z}')$. Then for every $\boldsymbol\mu\in \Gamma(Z_\sharp \P,Z'_\sharp \P)$ there exist a totally ordered strictly increasing sequence $(N_n)_n \subset \cN$ and maps
$g_n \in \rmS(\Omega, \cB, \P; \cB_{{N_n}})$ such that
\begin{equation}\label{eq:gangboconv2}
(Z,Z'\circ g_n)_\sharp 
\P \to \boldsymbol \mu \text{ in } \prob(\mathsf{Z}\times \mathsf{Z}').
\end{equation}
In particular, if $\X$ and $\mathsf{Y}$ are separable Banach spaces, $X,X' \in L^p(\Omega, \cB, \P; \mathsf{X})$, $Y,Y' \in L^q(\Omega, \cB, \P; \mathsf{Y})$, $p,q \in [1,+\infty)$, and $(X,Y)_\sharp \P=(X',Y')_\sharp \P$, then there exist a  totally ordered strictly increasing sequence $(N_n)_n \subset \cN$ and maps
$g_n \in \rmS(\Omega, \cB, \P; \cB_{{N_n}})$ such that
$X'\circ g_n\to X$ in $L^p(\Omega, \cB, \P; \X)$ and $Y'\circ g_n\to Y$ in $L^q(\Omega, \cB, \P; \mathsf{Y})$ as $n\to\infty$.
\end{corollary}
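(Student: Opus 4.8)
The plan is to reduce both assertions to Theorem \ref{thm:gangbo}; the first statement is the substantial one, and the ``in particular'' part then follows by a suitable choice of data together with a soft upgrade of narrow convergence to strong convergence. For the first claim, note that Theorem \ref{thm:gangbo} produces, for any $\ggamma\in\Gamma(\P,\P)$, maps $g_n$ with $(Z\otimes Z')_\sharp(\ii_\Omega,g_n)_\sharp\P\to (Z\otimes Z')_\sharp\ggamma$, and that $(Z\otimes Z')_\sharp(\ii_\Omega,g_n)_\sharp\P=(Z,Z'\circ g_n)_\sharp\P$. Hence \eqref{eq:gangboconv2} will follow once I exhibit $\ggamma\in\Gamma(\P,\P)$ with $(Z\otimes Z')_\sharp\ggamma=\boldsymbol\mu$, which I would obtain by gluing. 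Setting $\alpha:=Z_\sharp\P$, $\beta:=Z'_\sharp\P$ and using the disintegration theorem on the standard Borel space $(\Omega,\cB,\P)$ (see e.g.~\cite{ags}), I write $\P=\int_{\mathsf Z}\P^1_z\,\de\alpha(z)$ and $\P=\int_{\mathsf Z'}\P^2_{z'}\,\de\beta(z')$, with $\P^1_z$ concentrated on $\{Z=z\}$ for $\alpha$-a.e.~$z$ and $\P^2_{z'}$ on $\{Z'=z'\}$ for $\beta$-a.e.~$z'$, and define
\begin{equation*}
  \ggamma:=\int_{\mathsf Z\times\mathsf Z'}\big(\P^1_z\otimes\P^2_{z'}\big)\,\de\boldsymbol\mu(z,z')\in\prob(\Omega\times\Omega).
\end{equation*}
Since $\boldsymbol\mu$ has marginals $\alpha,\beta$, the marginals of $\ggamma$ are $\int\P^1_z\,\de\alpha=\P$ and $\int\P^2_{z'}\,\de\beta=\P$, so $\ggamma\in\Gamma(\P,\P)$; and since $(Z\otimes Z')_\sharp(\P^1_z\otimes\P^2_{z'})=\delta_z\otimes\delta_{z'}$, we get $(Z\otimes Z')_\sharp\ggamma=\boldsymbol\mu$, as required.

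For the ``in particular'' claim I would apply the first part with $\mathsf Z=\mathsf Z':=\mathsf X\times\mathsf Y$, $Z:=(X,Y)$ and $Z':=(X',Y')$. The hypothesis $(X,Y)_\sharp\P=(X',Y')_\sharp\P=:\rho$ reads $Z_\sharp\P=Z'_\sharp\P=\rho$, so the diagonal coupling $\boldsymbol\mu:=(\mathrm{Id},\mathrm{Id})_\sharp\rho$, concentrated on $\{(w,w):w\in\mathsf X\times\mathsf Y\}$, lies in $\Gamma(\rho,\rho)=\Gamma(Z_\sharp\P,Z'_\sharp\P)$. By the first part there exist $g_n\in\rmS(\Omega,\cB,\P;\cB_{N_n})$ with
\begin{equation*}
  \big((X,Y),(X'\circ g_n,Y'\circ g_n)\big)_\sharp\P=(Z,Z'\circ g_n)_\sharp\P\longrightarrow\boldsymbol\mu\quad\text{in }\prob\big((\mathsf X\times\mathsf Y)^2\big).
\end{equation*}
Since $\boldsymbol\mu$ lives on the diagonal, testing this narrow convergence against the bounded continuous functions $(w_1,w_2)\mapsto\min\{\|w_1-w_2\|/\eps,1\}$, which vanish on the diagonal and dominate $\mathbbm 1_{\{\|w_1-w_2\|\ge\eps\}}$, shows that $X'\circ g_n\to X$ and $Y'\circ g_n\to Y$ in $\P$-probability.

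It remains to upgrade these to convergence in $L^p$ and $L^q$, and here the clean point is that each $g_n$ is measure preserving, so $(X'\circ g_n)_\sharp\P=X'_\sharp\P=X_\sharp\P$, the last equality holding because $X_\sharp\P$ and $X'_\sharp\P$ are both the first marginal of $\rho$. Thus every $X'\circ g_n$ has the same law as $X\in L^p$, whence $\{\|X'\circ g_n\|_{\mathsf X}^p\}_n$ is uniformly integrable; combined with the convergence in probability above, Vitali's convergence theorem gives $X'\circ g_n\to X$ in $L^p(\Omega,\cB,\P;\mathsf X)$, and the identical argument with $q,\mathsf Y$ yields $Y'\circ g_n\to Y$ in $L^q(\Omega,\cB,\P;\mathsf Y)$.

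The marginal and pushforward verifications in the gluing step and the continuous-mapping argument in the second step are routine. The two points that require genuine care are the construction of $\ggamma$ — this is precisely where the standard Borel/disintegration structure enters — and the passage from narrow convergence to strong convergence: the latter goes through so smoothly only because measure preservation forces every $X'\circ g_n$ to share the \emph{same} law as $X$, so that uniform integrability is automatic rather than something one has to estimate, and this is what allows different exponents $p\ne q$ to be treated simultaneously with a single sequence $(g_n)$.
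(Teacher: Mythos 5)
Your proposal is correct, and its core follows the paper's own route: the gluing construction via disintegration of $\P$ with respect to $Z$ and $Z'$ to produce $\ggamma\in\Gamma(\P,\P)$ with $(Z\otimes Z')_\sharp\ggamma=\boldsymbol\mu$, followed by an application of Theorem \ref{thm:gangbo}, and then the diagonal coupling $\boldsymbol\mu=(\mathrm{Id},\mathrm{Id})_\sharp\rho$ for the ``in particular'' part, are exactly the paper's steps. The only genuine divergence is the final upgrade from narrow convergence to strong convergence. The paper stays inside the Wasserstein framework: by Remark \ref{rem:sepa} the $p$-moments of $(X,X'\circ g_n)_\sharp\P$ are \emph{constant} along the sequence (measure preservation again), so narrow convergence improves to convergence in $\prob_p(\X^2)$ via \eqref{eq:important}, and then the continuity of $(x,x')\mapsto\int|x-x'|^p$ along $\prob_p$-convergent sequences (\cite[Prop.~7.1.5, Lem.~5.1.7]{ags}) gives $\int|X-X'\circ g_n|^p\,\d\P\to 0$. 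You instead extract convergence in probability by testing the narrow convergence against the truncated-distance functions $\min\{\|w_1-w_2\|/\eps,1\}$ (which vanish on the diagonal, where the limit measure is concentrated), and then invoke Vitali's theorem, with uniform integrability of $\{\|X'\circ g_n\|_\X^p\}_n$ coming for free since all these variables share the law of $\|X\|_\X^p\in L^1$. Both arguments hinge on the same structural fact — measure preservation forces $(X'\circ g_n)_\sharp\P=X_\sharp\P$ for all $n$ — but yours is more elementary and self-contained (no Wasserstein machinery), while the paper's re-uses Remark \ref{rem:sepa} and fits the optimal-transport toolkit it has already set up; both handle the two exponents $p\ne q$ with the single sequence $(g_n)_n$ equally well.
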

\begin{proof} Let $\mu:=Z_\sharp \P$ and $\mu':=Z'_\sharp \P$;
  let us first observe that there exists $\ggamma\in \Gamma(\P,\P)$
  such that $(Z\otimes Z')_\sharp \ggamma=\mmu$.
  In fact, we can disintegrate $\P$ w.r.t.~$Z$ and $\mu$ (see e.g. \cite[Theorem 5.3.1]{ags} for details on the disintegration theorem), obtaining
  a Borel family of measures $(\theta_z)_{z\in \mathsf{Z}}\subset \mathcal P(\Omega)$
  such that $\theta_z$ is concentrated on $Z^{-1}(z)$ for
  $\mu$-a.e.~$z \in \mathsf{Z}$ and
  $\P=\int \theta_z \,\d\mu(z)$; similarly, we can also find
  $(\theta'_{z'})_{z'\in \mathsf{Z}'}\subset \mathcal P(\Omega)$ such that
  $\theta_{z'}'$ is concentrated on $(Z')^{-1}(z')$ for $\mu'$-a.e.~$z' \in \mathsf{Z}'$ and
  $\P = \int \theta'_{z'}\,\d\mu'(z')$. We can thus define
  $\ggamma:=\int (\theta_{z}\otimes \theta'_{z'})\,\d\boldsymbol
  \mmu(z,z')\in \mathcal P(\Omega\times \Omega)$ and it is immediate to
  check that
  $(Z\otimes Z')_\sharp\ggamma=\mmu$ since for every function
  $\varphi_1\in \mathrm C_b(\mathsf{Z})$, $\varphi_2\in \mathrm C_b(\mathsf{Z'})$,
  \begin{align*}
    \int
    \varphi_1(Z(\omega))\varphi_2(Z'(\omega'))\,\d\ggamma(\omega,\omega')
    &=
    \int \bigg(\int
    \varphi_1(Z(\omega))\varphi_2(Z'(\omega'))\,\d\theta_{z}(\omega)\,\d\theta'_{z'}(\omega')
      \bigg)\,\d
      \boldsymbol\mu(z,z')
    \\&=
    \int \int
    \varphi_1(Z(\omega))\,\d\theta_z(\omega)
    \int\varphi_2(Z'(\omega'))\,\d\theta'_{z'}(\omega')
    \,\d
    \boldsymbol\mu(z,z')
    \\&=
    \int \Big(
    \varphi_1(z)
    \varphi_2(z')\Big)
    \,\d
    \boldsymbol\mu(z,z').
  \end{align*}
  Notice also that it is enough to verify that $(Z,Z')_\sharp \P$ and $\ggamma$ have the same integral for every function of the form $\varphi_1 \otimes \varphi_2$ as above to conclude that the two measures coincide (see e.g.~the proof of the above Corollary \ref{cor:marc}).
  We can then apply \eqref{eq:gangboconv} and obtain
  \eqref{eq:gangboconv2}.
  
  Let us show the last part of the statement: we take $\mathsf{Z}=\mathsf{Z}':=\X \times \mathsf{Y}$ and, by \eqref{eq:gangboconv2} with $Z:=(X,Y)$, $Z':=(X',Y')$ and $\mmu:=(\ii_{\X \times \mathsf{Y}},\ii_{\X \times \mathsf{Y}})_\sharp (X,Y)_\sharp \P$, we have that 
\[
 (X,Y,X'\circ g_n,Y'\circ g_n)_\sharp 
\P \to \boldsymbol \mu \text{ in }  \prob((\X \times \mathsf{Y})^2).
\]
We thus have that $(X,X'\circ g_n)_\sharp \P \to (\ii_\X,\ii_\X)_\sharp X_\sharp \P$ in $\prob(\X^2)$. By Remark \ref{rem:sepa} with $\psi((x,y),(x',y'))= |x|_\X^p + |x'|_\X^p$, $x,x \in \X, y,y' \in \mathsf{Y}$, we get, also using \eqref{eq:important}, that $(X,X'\circ g_n)_\sharp \P \to (\ii_\X,\ii_\X)_\sharp X_\sharp \P$ in $\prob_p(\X^2)$. As a consequence (see e.g.\cite[Proposition 7.1.5, Lemma 5.1.7]{ags}), we get
  \begin{align*}
    \lim_{n\to\infty} \int |X-X'\circ g_n|^p\,\d\P
    &=
    \lim_{n\to\infty} \int |x-x'|^p\,\d\big((X,X'\circ g_n)_\sharp
      \P)(x,x')\\
      &=
      \int |x-x'|^p\,\d((\ii_\X,\ii_\X)_\sharp X_\sharp \P)(x,x')\\
      &=0.
  \end{align*}
The proof for $Y$ and $Y'$ is identical.
\end{proof}

\begin{remark}\label{rem:sepa2} In the same setting of Corollary \ref{cor:from-gangbo} and similarly to Remark \ref{rem:sepa}, if $\mathsf{Z}=\mathsf{Z}'=\X$ for a separable Banach space $\X$ and $X,X' \in L^p(\Omega, \cB, \P; \X)$, $p \in [1,+\infty)$, then \eqref{eq:important} gives that the convergence in \eqref{eq:gangboconv2} holds in $\prob_p(\X^2)$,    
\end{remark}

As a byproduct, we recover the following important 
result 
(see e.g.~\cite[Lemma 3.13]{gangbotudo} for a statement in case $p=2$ and $\X=\R^d$), 
which is also related
to the equivalence 
between the Monge and the Kantorovich formulations of Optimal Transport problems 
\cite[Theorem 2.1, Theorem 9.3]{lnamb},  \cite[Theorem B]{pra}.

\begin{proposition} \label{prop:pratelli} Let $(\Omega, \cB)$ be a standard Borel space endowed with a nonatomic probability measure $\P$, let $\X$ be a separable Banach space and let $p\in[1,+\infty)$. If $\mu, \nu \in \prob_p(\X)$ and $X \in L^p(\Omega, \cB,\P; \X)$ is s.t.~$X_\sharp \P = \mu$, then, for every $\eps>0$, there exists $Y \in L^p(\Omega, \cB,\P; \X)$ s.t. $Y_\sharp \P = \nu$ and
\[ |X-Y|_{L^p(\Omega \cB,\P; \X)} \le W_p(\mu, \nu) + \eps.\]
\end{proposition}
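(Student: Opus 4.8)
The plan is to realize $\nu$ as the law of a fixed random variable, to transport it along an optimal plan by means of the approximation result in Corollary~\ref{cor:from-gangbo}, and finally to read off the desired $L^p$ estimate from the convergence of $p$-th moments. First I would fix a measurable representative $X'\in L^0(\Omega,\cB,\P;\X)$ of $\nu$, i.e.~a map with $X'_\sharp\P=\nu$. Such a map exists precisely because $\P$ is nonatomic on a standard Borel space: when $\nu$ is itself nonatomic this is Corollary~\ref{cor:repre}, while in general one writes $\nu$ as the sum of its (countably many) atoms and its diffuse part, uses the intermediate-value property of the nonatomic measure $\P$ to partition $\Omega$ into pieces of the prescribed masses, maps the atomic pieces to the corresponding points, and applies Corollary~\ref{cor:repre} on the remaining diffuse piece. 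Since $\nu\in\prob_p(\X)$, one has $\int_\Omega|X'|^p\,\d\P=\int_\X|x|^p\,\d\nu<+\infty$, so automatically $X'\in L^p(\Omega,\cB,\P;\X)$.

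Next I would pick an optimal plan $\ggamma\in\Gamma_o(\mu,\nu)$, so that $\int_{\X\times\X}|x-y|^p\,\d\ggamma=W_p^p(\mu,\nu)$. As $\ggamma\in\Gamma(\mu,\nu)=\Gamma(X_\sharp\P,X'_\sharp\P)$, I can invoke Corollary~\ref{cor:from-gangbo} with $Z:=X$, $Z':=X'$ and $\boldsymbol\mu:=\ggamma$ to produce maps $g_n\in\rmS(\Omega,\cB,\P;\cB_{N_n})$ with $(X,X'\circ g_n)_\sharp\P\to\ggamma$ in $\prob(\X^2)$; by Remark~\ref{rem:sepa2}, and using that both $X$ and $X'$ belong to $L^p$, this convergence in fact takes place in $\prob_p(\X^2)$. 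Setting $Y_n:=X'\circ g_n$, the measure-preserving property of $g_n$ gives $(Y_n)_\sharp\P=X'_\sharp(g_n)_\sharp\P=X'_\sharp\P=\nu$, so each $Y_n$ already has the prescribed law.

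It then remains to estimate $|X-Y_n|_{L^p}$. Expressing the $L^p$ distance as an integral against the joint law,
\[
\int_\Omega|X-Y_n|^p\,\d\P=\int_{\X\times\X}|x-y|^p\,\d\big((X,X'\circ g_n)_\sharp\P\big)(x,y),
\]
I would use that the integrand $(x,y)\mapsto|x-y|^p$ is continuous and satisfies $|x-y|^p\le 2^{p-1}(|x|^p+|y|^p)$, hence has $p$-growth with respect to the product norm on $\X^2$; together with the characterization \eqref{eq:important} of convergence in $\prob_p$, this forces its integral to pass to the limit (exactly as in the final computation of the proof of Corollary~\ref{cor:from-gangbo}). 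Therefore
\[
\lim_{n\to\infty}\int_\Omega|X-Y_n|^p\,\d\P=\int_{\X\times\X}|x-y|^p\,\d\ggamma=W_p^p(\mu,\nu),
\]
so that $|X-Y_n|_{L^p}\to W_p(\mu,\nu)$. Choosing $n$ large enough that $|X-Y_n|_{L^p}\le W_p(\mu,\nu)+\eps$ and putting $Y:=Y_n$ completes the argument.

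Essentially all of the substantial work is already packaged inside Corollary~\ref{cor:from-gangbo}. The two points that require genuine care are the existence of the representative $X'$ when $\nu$ carries atoms --- which I would handle through the nonatomicity of $\P$ as above --- and the upgrade from narrow convergence to moment convergence. The hard part will be the latter: it is crucial that Remark~\ref{rem:sepa2} lifts \eqref{eq:gangboconv2} to the $\prob_p$-topology, so that no mass of $|x-y|^p$ escapes to infinity along the sequence and the continuous $p$-growth functional is stable under the limit.
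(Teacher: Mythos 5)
Your proposal is correct and follows essentially the same route as the paper's proof: fix an optimal plan $\ggamma\in\Gamma_o(\mu,\nu)$, represent $\nu$ by some $X'\in L^p(\Omega,\cB,\P;\X)$, apply Corollary~\ref{cor:from-gangbo} together with Remark~\ref{rem:sepa2} to obtain $(X,X'\circ g_n)_\sharp\P\to\ggamma$ in $\prob_p(\X^2)$, and pass the $p$-growth integrand $|x-y|^p$ to the limit before choosing $n$ large. The only difference is that you also justify the existence of the representative $X'$ (including the atomic part of $\nu$), a detail the paper's proof simply asserts.
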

\begin{proof}
Let us consider the $\cN$-refined standard Borel probability space $(\Omega, \cB, \P, (\mathfrak P_N)_{N \in \cN})$ with $\cN =(2^k)_{k \in \N}$; let $\mmu \in \Gamma_o(\mu, \nu)$ and let $X'\in L^p(\Omega,\cB,\P; \X)$ be such that $X'_\sharp\P=\nu$. By Corollary \ref{cor:from-gangbo} and Remark \ref{rem:sepa2}, there exist a strictly increasing sequence $(N_n)_n \subset \cN$ and maps
$g_n \in \rmS(\Omega, \cB, \P; \cB_{{N_n}})$ such that
\begin{equation*}
(X,X'\circ g_n)_\sharp 
\P \to \boldsymbol \mu \text{ in } \prob_p(\X^2).
\end{equation*}
We have
\begin{align*}
    \lim_{n\to\infty} \int |X-X'\circ g_n|^p\,\d\P
    &=
    \lim_{n\to\infty} \int |x-y|^p\,\d\big((X,X'\circ g_n)_\sharp
      \P)(x,y)\\&=\int |x-y|^p\,\d\mmu(x,y)
      =W_p^p(\mu,\nu)
      \end{align*}
Thus given $\eps>0$ it is always possible to find $n \in \N$ sufficiently large such that $Y:=X' \circ g_n$ satisfies the thesis.      
\end{proof}

\section{Monotone-dissipative operators 
and Lipschitz maps in \texorpdfstring{$L^p$}{L}-spaces invariant by measure-preserving 
transformations}
\label{subsec:invariant-maps}
Let $(\Omega, \cB)$ be a standard Borel space endowed with a nonatomic probability measure $\P$ (see Definition \ref{def:sbs}). We denote by $\rmS(\Omega)$ the class of
$\cB$-$\cB$-measurable maps $g:\Omega\to\Omega$ which are
essentially injective and measure-preserving, i.e. there exists a full $\P$-measure set $\Omega_0 \in \cB$ such that $g$ is injective on $\Omega_0$ and $g_\sharp \P=\P$.
If $g\in \rmS(\Omega)$, there exists $g^{-1}\in
\rmS(\Omega)$
(defined up to a $\P$-negligible set) such that $g^{-1}\circ g=g\circ
g^{-1}=\ii_\Omega$ $\P$-a.e.~in $\Omega$.

 Consider two separable Banach spaces $\Y,\Z$ and fix exponents $p,q\in [1,+\infty)$. We set
\begin{equation}
\label{eq:XY-notation}
\cY:=L^p(\Omega, \cB, \P;\Y),\quad
\cZ:=L^q(\Omega, \cB, \P;\Z).
\end{equation}
Notice that 
for every $g\in \rmS(\Omega)$ the 
pullback transformation
$g^*:\cY\mapsto \cY$ sending $X$ to $X \circ g$
is a linear isometry of $\cY$: in particular
\begin{align}
    X_n\to X\text{ strongly in $\cY$}
    \quad&\Longrightarrow
    \quad
    g^* X_n\to g^* X\text{ strongly in $\cY$},\\
    X_n\to X\text{ weakly in $\cY$}
    \quad&\Longrightarrow
    \quad
    g^* X_n\to g^* X\text{ weakly in $\cY$}.
\end{align}

The aim of this section is to study properties of maps and sets/operators, defined on these particular spaces, which are invariant by measure-preserving transformations. We will also apply the results of Section \ref{sec:appC} to this particular setting. The interest on such kind of properties is made evident by the implications in the study of dissipative evolutions in Wasserstein spaces via Lagrangian representations (cf. \cite{CSS2grande}).

\begin{definition}[Invariant sets and maps]
\label{def:inv}
We say that a set $\Bb\subset\cY\times\cZ$
is \emph{invariant by measure-preserving isomorphisms} if
for every $g \in \rmS(\Omega)$ 
it holds
\begin{equation}\label{def:invbymap}
(X,Y) \in \Bb \quad \Longrightarrow\quad (g^*X,g^*Y)=(X\circ g,Y\circ g)  \in \Bb,
\end{equation}
where, with a slight abuse of notation, we denote with the same symbol $g^*$ both the pullback transformation induced by $g$ on $\cY$ and on $\cZ$.\\
A set $\Bb \subset \cY\times\cZ$ is \emph{law invariant} if it holds
\begin{equation}\label{def:invlaw}
(X,Y) \in \Bb,\quad (X',Y')\in\cY\times\cZ, \quad (X,Y)_\sharp\P=(X',Y')_\sharp\P\quad \Longrightarrow\quad
(X',Y') \in \Bb.
\end{equation}

A (single valued) operator $\map:\dom(\map)\subset \cY\to\cZ$ is \emph{invariant by measure-preserving
  isomorphisms}
(resp.~\emph{law invariant}) if its graph in $\cY\times \cZ$ is invariant by measure
preserving isomorphisms (resp.~law invariant).
\end{definition}
It is easy to check that a law invariant set or operator is also
invariant by measure-preserving isomorphisms.
It is also immediate to check that
an operator $\map:\dom(\map)\subset \cY\to\cZ$ is invariant by measure-preserving isomorphisms
if 
for every $g\in \rmS(\Omega)$
and every $X\in \dom(\map)$ it holds
\begin{equation}
  \label{eq:invariant-maps}
  X\circ g\in \dom(\map),\quad
  \map(X\circ g)=
  \map(X)\circ g.
\end{equation}
Similarly, $\map$ is law invariant if 
for every $X\in \dom(\map),\ X'\in \cY,\ Y'\in \cZ$
\begin{equation}
  \label{eq:invariant-maps2}
  (X,\map X)_\sharp \P=(X',Y')_\sharp \P\quad
\Longrightarrow\quad
X'\in \dom(\map),\quad
Y'=\map X'.
\end{equation}

\begin{remark}\label{rem:conj}
Notice that, when $\Z=\X^*$
are reflexive, $p>1$, and 
$q=p^*$ is the conjugate exponent of $p$, 
then $\cZ=\cY^*$ and 
the notion of invariance by measure-preserving isomorphisms for $\Bb\subset\cY\times\cZ$ coincides with the $\G$-invariance of
    Definition \ref{def:Ginv},
    $\G$ being the group of isometric isomorphisms 
    induced by $\rmS(\Omega)$ via 
    $g^*:\cY\times \cY^* \to \cY\times 
    \cY^*$ with $g^*(X,V) = (X\circ g,V\circ g)$ for every $(X,V) \in \cY\times \cY^*$ and every $g \in \rmS(\Omega)$.
\end{remark}

Let us denote by $\iota:\cY\to
\mathcal P_p(\Y)$ the push-forward operator, $\iota(X):=X_\sharp \P$ (cf.~\eqref{eq:pushf}). We frequently use the notation $\iota_X=\iota(X)$.
The map $\iota$ induces a one-to-one correspondence between
subsets in $\mathcal P_p(\Y)$ (see \eqref{condTanTX})  and law invariant subsets of $\cY$.

As a first result we show that for closed sets and continuous operators the two
notions of invariance are in fact equivalent.
\begin{proposition}[Closed sets invariant by m.p.i.~are law invariant]
  \label{prop:map-invariant}
  If $\Bb\subset \cY\times\cZ$ is invariant by
  measure-preserving isomorphisms, then its closure $\overline\Bb$ is also law invariant.
  In particular, a closed set of $\cY\times\cZ$  is law invariant if and only if it is invariant by measure-preserving isomorphisms and 
  if $\map:\dom(\map)\subset \cY\to\cZ$
  is a continuous operator invariant by measure-preserving isomorphisms and
  $\dom(\map)$ is closed, then
  $\map$ is also law invariant.
\end{proposition}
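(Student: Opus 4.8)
The plan is to reduce both ``in particular'' statements to the first assertion, and to prove that one through the approximation result Corollary~\ref{cor:from-gangbo}, which supplies, for two pairs of random variables with the same joint law, a sequence of measure-preserving isomorphisms realizing strong $L^p\times L^q$ convergence. To invoke it I would first fix, via Lemma~\ref{le:exseg}, a $\cN$-segmentation of $(\Omega,\cB,\P)$ (say with $\cN=\N$), so that $(\Omega,\cB,\P,(\mathfrak P_N)_{N\in\cN})$ is a $\cN$-refined standard Borel probability space to which Corollary~\ref{cor:from-gangbo} applies, with the separable Banach spaces $\Y,\Z$ and the exponents $p,q$.

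For the main assertion I would argue in two steps. \emph{Step 1: $\overline{\Bb}$ is invariant by measure-preserving isomorphisms.} Given $(X,Y)\in\overline{\Bb}$ and $g\in\rmS(\Omega)$, choose $(X_k,Y_k)\in\Bb$ with $(X_k,Y_k)\to(X,Y)$ in $\cY\times\cZ$. Since the pullback $g^*$ is a linear isometry on $\cY$ and on $\cZ$, we have $(X_k\circ g,Y_k\circ g)\in\Bb$ and $(X_k\circ g,Y_k\circ g)\to(X\circ g,Y\circ g)$, whence $(X\circ g,Y\circ g)\in\overline{\Bb}$. \emph{Step 2: upgrade to law invariance.} Let $(X,Y)\in\overline{\Bb}$ and $(X',Y')\in\cY\times\cZ$ satisfy $(X',Y')_\sharp\P=(X,Y)_\sharp\P$. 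Applying Corollary~\ref{cor:from-gangbo} with the roles of the two pairs exchanged (which is legitimate, the hypothesis being symmetric) produces $g_n\in\rmS(\Omega)$ with $X\circ g_n\to X'$ in $\cY$ and $Y\circ g_n\to Y'$ in $\cZ$. By Step 1 each $(X\circ g_n,Y\circ g_n)$ lies in $\overline{\Bb}$; letting $n\to\infty$ and using that $\overline{\Bb}$ is closed gives $(X',Y')\in\overline{\Bb}$.

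The two consequences then follow directly. For a closed set $\Bb$, the implication ``law invariant $\Rightarrow$ invariant by m.p.i.'' is immediate, since $(X\circ g,Y\circ g)_\sharp\P=(X,Y)_\sharp(g_\sharp\P)=(X,Y)_\sharp\P$ for every $g\in\rmS(\Omega)$; the converse is the main assertion applied with $\overline{\Bb}=\Bb$, so the two notions coincide on closed sets. For a continuous operator $\map$ with closed domain that is invariant by m.p.i., I would first check that its graph is closed: if $(X_k,\map X_k)\to(X,Y)$ in $\cY\times\cZ$, then $X_k\to X$ forces $X\in\dom(\map)$ by closedness of the domain, and continuity of $\map$ gives $\map X_k\to\map X$, so $Y=\map X$ and $(X,Y)$ belongs to the graph. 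The graph is thus a closed set invariant by m.p.i., hence law invariant by the preceding case, i.e.~$\map$ is law invariant.

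I expect the genuine difficulty to be entirely contained in Corollary~\ref{cor:from-gangbo}; within this proof the only delicate point is the order of operations in Step~2, namely that one must pass to the closure \emph{before} applying the approximation theorem, so that the limits $X\circ g_n,\ Y\circ g_n$ remain in $\overline{\Bb}$, rather than attempting to approximate directly inside $\Bb$, where invariance of limit points is not available.
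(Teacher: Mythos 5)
Your proposal is correct and follows essentially the same route as the paper's proof: first the closure inherits invariance by measure-preserving isomorphisms because each pullback $g^*$ is an isometry, then Corollary~\ref{cor:from-gangbo} (with the symmetric roles of the two pairs, exactly as the paper uses it) upgrades this to law invariance for the closed set, and the operator case reduces to the closedness of the graph of a continuous map with closed domain. The extra details you supply—fixing a segmentation via Lemma~\ref{le:exseg} so the corollary applies, and verifying the easy direction ``law invariant $\Rightarrow$ invariant by m.p.i.''—are points the paper treats as implicit or states separately, not genuine differences in approach.
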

\begin{proof}
  Let us first observe that if $\Bb$ is invariant by measure-preserving isomorphisms then $\overline \Bb$ has the same property, since for every $g\in \rmS(\Omega)$ the pullback transformation $g^*$ is an isometry in $\cY$ and in $\cZ$.
  Let us now suppose that $\Bb$ is closed and invariant by measure-preserving isomorphisms,
  $(X,Y)\in \Bb$, $(X',Y')\in \cY\times\cZ$ with
  $(X,Y)_\sharp \P=(X',Y')_\sharp \P$.
  We can apply Corollary \ref{cor:from-gangbo} and find a sequence of
  measure-preserving isomorphisms $g_n\in \rmS(\Omega)$ such that $(X,Y)\circ g_n\to (X',Y')$ in $\cY\times\cZ$.
  Since $(X,Y)\circ g_n\in \Bb$
  and $\Bb$ is closed, we deduce that $(X',Y')\in \Bb$ as well.

  The case of continuous operators then follows by the fact that
  the graph of a continuous operator is closed.
\end{proof}

\begin{remark}\label{rem:equivinv} In light of Proposition \ref{prop:map-invariant}, if a subset $\Bb \subset \cY\times\cZ$ (resp.~an operator $\map:\dom(\map)\subset \cY\to\cZ$) is closed (resp.~continuous and $\dom(\map)$ is closed), we use the simplified terminology \emph{invariant}, whenever $\Bb$ (resp.~$\map$) is law invariant or invariant by measure-preserving isomorphisms, being these two notions equivalent.
\end{remark}

 As an application of the results in Section \ref{sec:appC},
 we obtain the following extension results.
 \begin{theorem}
 [Maximal extensions of monotone operators invariant by measure-preserving isomorphisms]
\label{thm:maximal-monotonicity}
Let $\Y$ be a separable and reflexive Banach space, $p\in (1,\infty)$, 
$\cY:=L^p(\Omega,\cB, \P;\Y)$. 
If $\Aa\subset \cY\times \cY^*$ is a 
monotone operator invariant by measure-preserving isomorphisms, 
 then there exists  
    a maximal monotone 
    extension of $\Aa$ which is 
    invariant by measure-preserving isomorphisms (and therefore also law invariant)
    whose domain is contained
    in $\clconv{\dom(\Aa)}.$
\end{theorem}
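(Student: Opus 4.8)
The plan is to reduce the statement to the abstract extension result of Theorem~\ref{thm:graziebauinv} by identifying invariance by measure-preserving isomorphisms with $\G$-invariance for a suitable group $\G$. By Remark~\ref{rem:conj}, since $\Y$ is reflexive, $p\in(1,\infty)$ and the conjugate exponent $p^*$ plays the role of $q$, the space $\cZ$ coincides with $\cY^*$ and invariance by measure-preserving isomorphisms for a subset of $\cY\times\cY^*$ is exactly the $\G$-invariance of Definition~\ref{def:Ginv}, where $\G$ is the group of isometric isomorphisms of $\cY\times\cY^*$ induced by the pullbacks $g^*(X,V)=(X\circ g,V\circ g)$, $g\in\rmS(\Omega)$. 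Thus the hypothesis makes $\Aa$ a $\G$-invariant monotone operator, and it remains only to check that $\G$ fulfils the standing assumptions \eqref{eq:inv-duality} of Section~\ref{subsec:invext} for some fixed self-dual kernel.

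For the kernel I would take the standard self-dual choice \eqref{eq:ex-kernel}, namely $\psi(X,V):=\frac1p|X|^p+\frac1{p^*}|V|_*^{p^*}$ on $\cY\times\cY^*$. Both conditions in \eqref{eq:inv-duality} then follow at once from the measure-preserving property $g_\sharp\P=\P$ together with the change-of-variables formula \eqref{eq:pushf}. Indeed, for every $g\in\rmS(\Omega)$ and $(X,V)\in\cY\times\cY^*$ the pointwise pairing satisfies $\langle V\circ g,X\circ g\rangle=\langle V,X\rangle\circ g$, so integrating and using $g_\sharp\P=\P$ preserves the duality pairing; the same computation applied to the scalar integrands $|X|^p$ and $|V|_*^{p^*}$ shows that each $g^*$ is norm-preserving on $\cY$ and on $\cY^*$, whence $\psi$ is $\G$-invariant. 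This is the only genuinely computational point, and it is routine.

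With \eqref{eq:inv-duality} verified, I would apply Theorem~\ref{thm:graziebauinv} directly to the $\G$-invariant monotone operator $\Aa$: it produces a $\G$-invariant maximal monotone extension $\hAa$ (the contact set of the self-dual Lagrangian built from \eqref{eq:f0}) whose domain is contained in $\clconv{\dom(\Aa)}$. Through the identification above, $\G$-invariance of $\hAa$ is precisely invariance by measure-preserving isomorphisms. Finally, to obtain law invariance I would use that the graph of a maximal monotone operator in the reflexive space $\cY\times\cY^*$ is strongly closed, so that Proposition~\ref{prop:map-invariant} upgrades invariance by measure-preserving isomorphisms of the closed set $\hAa$ to full law invariance. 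I do not expect a real obstacle here: the conceptual content sits entirely in the already established results, and the only delicate analytic ingredient—the approximation of couplings underlying Proposition~\ref{prop:map-invariant}—is exactly what powers the final passage from invariance by measure-preserving isomorphisms to law invariance.
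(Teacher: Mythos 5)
Your proposal is correct and follows essentially the same route as the paper's own proof: identify invariance by measure-preserving isomorphisms with $\G$-invariance via Remark \ref{rem:conj}, apply Theorem \ref{thm:graziebauinv}, and upgrade to law invariance through closedness of maximal monotone graphs and Proposition \ref{prop:map-invariant}. The only difference is that you explicitly verify the kernel condition \eqref{eq:inv-duality} with the choice \eqref{eq:ex-kernel}, a detail the paper leaves implicit in Remark \ref{rem:conj}, and your verification is correct.
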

\begin{proof}
The thesis follows by applying Theorem \ref{thm:graziebauinv} and Remark \ref{rem:conj}. Recall also that if $\Aa$ is maximal monotone, then it is closed (see e.g.~\cite[Proposition 2.1]{Barbu10})
hence we can apply Proposition \ref{prop:map-invariant}.
\end{proof}
\begin{theorem}[Extension of Lipschitz and $\lambda$-dissipative invariant graphs]    
\label{cor:maximal-dissipativity}
Assume $p=2$ and 
let $\Y$ be a separable and Hilbert space.
The following hold:
\begin{enumerate}
    \item 
    if $\map:\dom(\map)\subset \cY\to\cY$ 
    is a $L$-Lipschitz function invariant by measure-preserving isomorphisms, then there exists a $L$-Lipschitz extension $ {\hat \map}:\cY\to\cY$, defined on the whole $\cY$ which is invariant by measure-preserving isomorphisms;
    \item if $\Bb \subset \cY \times \cY$ is a $\lambda$-dissipative operator
    which is invariant by measure-preserving isomorphisms, then there exists 
    a maximal $\lambda$-dissipative extension $\hat \Bb$ of $\Bb$ which is 
    invariant by measure-preserving isomorphisms (and therefore also law invariant)
    whose domain is contained in 
    $\clconv{\dom(\Bb)}.$
 \end{enumerate}
\end{theorem}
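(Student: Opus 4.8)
The plan is to recognize that, once we specialize to $p=2$ with $\Y$ a separable Hilbert space, the ambient space $\cY=L^2(\Omega,\cB,\P;\Y)$ becomes a separable Hilbert space, and the two statements reduce to direct instances of the abstract extension theorems of Sections \ref{subsec:extension-diss} and \ref{subsec:invariant-Lip}. First I would set up the relevant group of isometries: for each $g\in\rmS(\Omega)$ the pullback $g^*X:=X\circ g$ is a linear isometry of $\cY$ (since $g_\sharp\P=\P$), it is invertible with inverse $(g^{-1})^*$, and $h^*\circ g^*=(g\circ h)^*$; hence $\G_\H:=\{g^*:g\in\rmS(\Omega)\}$ is a group of linear isometries of $\cY$, and I let $\G:=\{(U,U):U\in\G_\H\}$ be the induced group on $\cY\times\cY$, exactly as required by Theorems \ref{thm:inv-max-ext} and \ref{thm:inv-lip-ext}.

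For part (1), the key observation is that $\G_\H$-invariance of $\map$ in the sense of Definition \ref{def:Ginvmap} is literally the same condition as invariance by measure-preserving isomorphisms written in \eqref{eq:invariant-maps}: both say that $g^*X\in\dom(\map)$ and $\map(g^*X)=g^*\map(X)$ for every $g\in\rmS(\Omega)$ and $X\in\dom(\map)$. I would therefore simply invoke the invariant Kirszbraun--Valentine theorem (Theorem \ref{thm:inv-lip-ext}) with $\H=\cY$ and this $\G_\H$, obtaining the desired $L$-Lipschitz extension $\hat\map:\cY\to\cY$ that is $\G_\H$-invariant, i.e.~invariant by measure-preserving isomorphisms.

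For part (2), invariance of $\Bb$ by measure-preserving isomorphisms is precisely its $\G$-invariance as a subset of $\cY\times\cY$ (cf.~Remark \ref{rem:conj}). I would then apply Theorem \ref{thm:inv-max-ext} to produce a maximal $\lambda$-dissipative, $\G$-invariant extension $\hat\Bb$ with $\dom(\hat\Bb)\subset\clconv{\dom(\Bb)}$. The only remaining point is the upgrade to law invariance: since $\hat\Bb$ is maximal $\lambda$-dissipative, the monotone operator $\lambda\ii-\hat\Bb$ is maximal monotone and hence closed, so $\hat\Bb$ has closed graph; Proposition \ref{prop:map-invariant} then turns invariance by measure-preserving isomorphisms into law invariance for the closed set $\hat\Bb$.

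I do not expect any serious obstacle here, as both parts are direct applications of the machinery developed above; the only points requiring genuine (if routine) care are the verification that the $g^*$ form a group of surjective isometries of the Hilbert space $\cY$ and the matching of the two invariance notions, together with the closedness of maximal $\lambda$-dissipative graphs needed to pass from invariance by measure-preserving isomorphisms to the stronger law invariance.
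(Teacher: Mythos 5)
Your proposal is correct and takes essentially the same route as the paper: both parts are immediate applications of Theorems \ref{thm:inv-lip-ext} and \ref{thm:inv-max-ext} with $\H:=\cY$ and $\G_\H$ the group of pullback isometries $g^*$, $g\in \rmS(\Omega)$ (cf.~Remark \ref{rem:conj}), with the passage to law invariance obtained from closedness of the maximal extension via Proposition \ref{prop:map-invariant}. Your write-up is in fact slightly more careful than the paper's two-line proof, since you verify the group structure of $\{g^*\}$ explicitly and correctly apply the closedness argument to the extension $\hat\Bb$ (the paper's phrasing nominally refers to $\Bb$, which need not be closed).
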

\begin{proof}
 The assertion is an immediate application
    of Theorems \ref{thm:inv-lip-ext}, \ref{thm:inv-max-ext},
    choosing $\H:=\cY$ and $\G_{\H}$ as the group of isometric isomorphisms 
    induced by $\rmS(\Omega)$ via $g^*:\cY \to \cY$ with $g^*X = X\circ g$ for every $X \in \cY$ and every $g \in \rmS(\Omega)$ (cf.~Remark \ref{rem:conj}). Notice that $\Bb$ is closed being maximal dissipative hence law invariant by Proposition \ref{prop:map-invariant}.
\end{proof}

We conclude this section
with some useful representation
results for various classes of law invariant transformations.

Given a set $D\subset \mathcal P_p(\Y)$, we set
\begin{equation}
  \label{eq:2}
  \Sp{\Y,D}:=\Big\{(x,\mu)\in \Y\times D\,:\,x\in \supp(\mu)\Big\},
\end{equation}
and we just write $\Sp{\Y}=\Sp{\Y,\mathcal P_p(\Y)}$.
The set $\Sp\Y$ is of $G_\delta$ type (see \cite[Formula (4.3)]{FSS22}) so that $\Sp{\Y,D}$ is a Borel set, if $D$ is Borel.

We state a first result on uniqueness of a representation of a single-valued operator $\map:\cY\to\cZ$ by a map from $\Sp{\Y}$ to $\Z$.
We then state existence (and uniqueness) of such map-representation when $\map$ is Lipschitz continuous and invariant, showing also further properties inherited by such map-representation. Finally, we will go back to the case of (possibly multivalued) maximal $\lambda$-dissipative operators in the final Theorem \ref{thm:invTOlawinv}.

\begin{lemma}\label{lem:uniqueness}
Let $\map: \dom(\map) \to \cZ$ be a map
defined in 
$\dom(\map) \subset \cY$.
If  $\ff_i: \Sp{\Y, \iota(\dom(\map)} \to \Z$, $i=1,2$, satisfy
\begin{itemize}
    \item 
    $\ff_i(\cdot, \mu)$ is continuous for every $\mu \in \iota(\dom(\map))$, 
    \item 
   for every $X\in \dom(\map)$
   $\map X(w)=\ff_i(X(w),X_\sharp\P)$ for a.e.~$\omega\in
    \Omega$,
\end{itemize} 
then $\ff_1=\ff_2$.
\end{lemma}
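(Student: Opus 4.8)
The plan is to reduce the global equality $\ff_1=\ff_2$ to a pointwise statement and then exploit the interplay between continuity and the notion of support. I would fix an arbitrary point $(x,\mu)\in\Sp{\Y,\iota(\dom(\map))}$ and aim to prove $\ff_1(x,\mu)=\ff_2(x,\mu)$; since the point is arbitrary, this yields the claim. By definition of $\Sp{\Y,\iota(\dom(\map))}$ we have $x\in\supp(\mu)$ and $\mu\in\iota(\dom(\map))$, so the first step is to select $X\in\dom(\map)$ with $X_\sharp\P=\mu$, which exists precisely because $\mu$ lies in the image of $\dom(\map)$ under $\iota$.

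The key step is to introduce the coincidence set
\[
A:=\big\{y\in\Y:\ff_1(y,\mu)=\ff_2(y,\mu)\big\},
\]
and to observe that $A$ is \emph{closed}: indeed, $\ff_1(\cdot,\mu)$ and $\ff_2(\cdot,\mu)$ are continuous by hypothesis, so the set on which they agree is closed. Next I would apply the second representation property to the chosen $X$, for both $i=1,2$, obtaining
\[
\ff_1(X(\omega),\mu)=\map X(\omega)=\ff_2(X(\omega),\mu)\quad\text{for a.e.\ }\omega\in\Omega,
\]
which says exactly that $X(\omega)\in A$ for $\P$-a.e.\ $\omega$. Pushing this forward through $X$ gives $\mu(A)=X_\sharp\P(A)=\P(X^{-1}(A))=1$.

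To conclude, I would invoke the defining property of the support: $\supp(\mu)$ is the smallest closed subset of $\Y$ carrying full $\mu$-mass, so every closed set of full measure must contain it. Since $A$ is closed with $\mu(A)=1$, this gives $\supp(\mu)\subset A$, and therefore $x\in\supp(\mu)\subset A$, i.e.\ $\ff_1(x,\mu)=\ff_2(x,\mu)$. As $(x,\mu)$ was arbitrary, $\ff_1=\ff_2$ on all of $\Sp{\Y,\iota(\dom(\map))}$.

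I do not expect a genuine obstacle here: the argument is elementary once the coincidence set is recognized as closed. The only point requiring mild care is the passage from \emph{a.e.\ agreement along a single random variable $X$} to \emph{agreement on all of $\supp(\mu)$}, which is precisely where the closedness of $A$ combined with the minimality of the support is essential. This also clarifies the role of the continuity assumption on the sections $\ff_i(\cdot,\mu)$: without it, two representatives could differ on the $\mu$-negligible complement of the essential range of $X$ (which is exactly $\supp(\mu)$), and uniqueness would fail.
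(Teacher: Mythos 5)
Your proof is correct and takes essentially the same approach as the paper: fix $(x,\mu)$, pick $X\in\dom(\map)$ with $X_\sharp\P=\mu$, use the a.e.\ coincidence of the two representations along $X$, and extend the equality to all of $\supp(\mu)$ via the continuity of $\ff_i(\cdot,\mu)$. The only cosmetic difference is in the last step: the paper argues by density (choosing $\omega_n$ with $X(\omega_n)\to x$ and passing to the limit), while you argue via closedness of the coincidence set together with the fact that every closed set of full $\mu$-measure contains $\supp(\mu)$ --- two formulations of the same fact.
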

\begin{proof}  Let $(x,\mu) \in \Sp{\Y, \iota(\dom(\map)}$. Since $\mu \in \iota(\dom(\map))$, we can find (a representative of) $X \in \dom(\map)$ such that $X_\sharp \P= \mu$ and a full $\P$-measure set $\Omega_0 \subset \Omega$ such that
\[ \map X(\omega)=\ff_1(X(\omega),X_\sharp\P) = \ff_2(X(\omega),X_\sharp\P) \text{ for every } \omega\in \Omega_0.\]
Since $X(\Omega_0)$ is dense in $\supp(\mu)$ we can find $(\omega_n)_n \subset \Omega_0$ such that $X(\omega_n) \to x$. Using the continuity of $\ff_i(\cdot, \mu)$  we can write the above equality for $\omega=\omega_n$ and then pass to the limit as $n \to + \infty$ obtaining that $\ff_1(x,\mu)=\ff_2(x,\mu)$.
\end{proof}

In the following Theorem \ref{thm:continuous}, we 
 provide an important structural representation of 
 invariant Lipschitz maps from $\cY$ to $\cZ$. 
 A similar kind of problems have been considered in \cite[Proposition 5.36]{CD18}.
\begin{theorem}[Structure of invariant Lipschitz maps]
\label{thm:continuous}
Let $\Y,\Z$ be 
separable Banach spaces
and let $\cY,\cZ$ be as in 
\eqref{eq:XY-notation}.
\begin{enumerate}
\item Let $\map: \cY \to \cZ$ be a $L$-Lipschitz map, invariant by measure-preserving isomorphisms. Then there exists a unique continuous map $\euler\map:\Sp{\Y}
\to \Z$ such that 
\begin{equation}\label{eq:5bis}
 \text{for every $X\in \cY$,\quad
    $\map X(\omega)=\euler \map(X(\omega),X_\sharp\P)$ for a.e.~$\omega\in
    \Omega$.}
\end{equation}
Moreover $\euler \map (\cdot, \mu): \supp(\mu) \to \Z$ is $L$-Lipschitz.
\item Let 
$\Y$ be an Hilbert space and $p=2$, let $\dom(\map) \subset \cY$ and let $\map: \dom(\map) \to \cY$ be a $L$-Lipschitz map, invariant by measure-preserving isomorphisms. Then there exists a unique continuous map $\euler\map:\Sp{\Y, \iota(\dom(\map)}\to \Y$ such that 
\begin{equation}\label{eq:5}
 \text{for every $X\in \dom(\map)$,\quad
    $\map X(\omega)=\euler \map(X(\omega),X_\sharp\P)$ for a.e.~$\omega\in
    \Omega$.}
\end{equation}
Moreover $\euler \map (\cdot, \mu): \supp(\mu) \to \Y$ is $L$-Lipschitz.
\end{enumerate}
\end{theorem}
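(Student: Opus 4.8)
The plan is to build $\euler\map$ from a pointwise factorization of $\map$ and to read off its regularity from the Lipschitz bound; since uniqueness is already guaranteed by Lemma \ref{lem:uniqueness}, only existence, the Lipschitz property and the continuity of $\euler\map$ remain. First I would record the basic equivariance consequence: if $X,X'\in\cY$ (resp.\ $\in\dom(\map)$ in part (2)) share the same law $X_\sharp\P=X'_\sharp\P=\mu$, then their joint laws agree, $(X,\map X)_\sharp\P=(X',\map X')_\sharp\P=:\beta_\mu$. Indeed, by Corollary \ref{cor:from-gangbo} there are $g_n\in\rmS(\Omega)$ with $X'\circ g_n\to X$ in $\cY$; invariance gives $\map(X'\circ g_n)=(\map X')\circ g_n$, and continuity of $\map$ gives $(\map X')\circ g_n\to\map X$ in $\cZ$, so the two joint laws (each preserved by the measure-preserving $g_n$) coincide in the limit. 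Thus $\beta_\mu\in\prob(\Y\times\Z)$ depends only on $\mu$ and has first marginal $\mu$.

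Next I would show that $\beta_\mu$ is concentrated on a graph, i.e.\ $\map X=l_\mu\circ X$ a.e.\ for a Borel map $l_\mu$ on $\supp(\mu)$ depending only on $\mu$. The key is equivariance under the stabiliser of $X$: for every $g\in\rmS(\Omega)$ with $X\circ g=X$ a.e., invariance forces $(\map X)\circ g=\map X$ a.e. Hence $\map X$ is invariant under all measure-preserving rearrangements fixing $X$; using the nonatomicity of $\P$ together with the fiberwise swapping isomorphisms of Corollaries \ref{cor:mpsub} and \ref{cor:isomor} (on a nonatomic fiber the only maps invariant under all its measure-preserving self-maps are the a.e.\ constants), any such function must be $\sigma(X)$-measurable, so by the Doob--Dynkin lemma $\map X=h_X(X)$ a.e.\ for a Borel $h_X$. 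The identity $\beta_\mu=(\ii,h_X)_\sharp\mu$ shows that $h_X$ agrees $\mu$-a.e.\ with a function $l_\mu$ of $\mu$ alone; setting $\euler\map(x,\mu):=l_\mu(x)$ for $\mu$-a.e.\ $x$ then yields the representations \eqref{eq:5bis} and \eqref{eq:5}.

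To upgrade $l_\mu$ to an $L$-Lipschitz map on $\supp(\mu)$ I would use a swapping estimate. For $x_1,x_2\in\supp(\mu)$ and small $\delta$ one has $\mu(B(x_i,\delta))>0$; picking $X$ with law $\mu$, choosing equal-measure subsets $E_i\subset X^{-1}(B(x_i,\delta))$, and letting $g\in\rmS(\Omega)$ be the measure-preserving swap of $E_1,E_2$ given by Corollary \ref{cor:mpsub}, we get that $X'=X\circ g$ has law $\mu$ and $\map X'=(\map X)\circ g$. Inserting this into $\|\map X-\map X'\|_{\cZ}\le L\|X-X'\|_{\cY}$ bounds the $L^q$-oscillation of $l_\mu$ between the two balls by $L$ times their $L^p$-distance; for $p=q$ (in particular the Hilbert case $p=2$ of part (2)) this gives $|l_\mu(x_1)-l_\mu(x_2)|\le L|x_1-x_2|$ at Lebesgue-type points, and hence, after passing to the continuous representative, the claimed $L$-Lipschitz bound on $\supp(\mu)$. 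The general-exponent case of part (1) follows from the same coupling inequality, now read as $\big(\int|l_\mu(x)-l_\mu(y)|^q\,\d\gamma\big)^{1/q}\le L\big(\int|x-y|^p\,\d\gamma\big)^{1/p}$ for every $\gamma\in\Gamma(\mu,\mu)$, which holds by density of the swap couplings (Theorem \ref{thm:gangbo}).

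Finally, joint continuity of $\euler\map$ on $\Sp{\Y}$ (resp.\ $\Sp{\Y,\iota(\dom(\map))}$) I would obtain by realizing convergent data $(x_n,\mu_n)\to(x,\mu)$ as $X_n\to X$ in $\cY$ with $(X_n)_\sharp\P=\mu_n$, then combining continuity of $\map$ with the uniform-in-$\mu$ Lipschitz control of $l_{\mu_n}$ and an a.e.-convergent subsequence to pass to the limit $l_{\mu_n}(x_n)\to l_\mu(x)$; in part (2) every construction is carried out inside $\dom(\map)$ and with laws in $\iota(\dom(\map))$, giving the domain $\Sp{\Y,\iota(\dom(\map))}$. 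I expect the two genuine difficulties to be, first, the factorization step, where converting ``invariance under the stabiliser of $X$'' into honest $\sigma(X)$-measurability needs the fiberwise measure-preserving surgery of Section \ref{sec:Borel} and the nonatomicity of $\P$; and second, the promotion of the a.e./integrated Lipschitz information to a pointwise $L$-Lipschitz and jointly continuous representative, i.e.\ making the measure-theoretic identities hold at every point of the support bundle.
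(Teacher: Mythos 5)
Your overall architecture (factorize $\map$ through $X$, extract the Lipschitz bound from equal-mass swaps, get joint continuity via Proposition \ref{prop:pratelli}) runs parallel to the paper's, and your steps 1 and 4 are essentially sound; but your factorization step, which is the heart of the theorem, rests on a false principle, and this is a genuine gap. You claim that nonatomicity of $\P$, together with invariance of $\map X$ under every $g\in\rmS(\Omega)$ fixing $X$, forces $\sigma(X)$-measurability of $\map X$. This is not true, because nonatomicity of $\P$ does \emph{not} make the conditional measures of $\P$ given $X$ nonatomic. Take $\Omega=[0,1)\times\{0,1\}$, $\P=\lambda\otimes(\tfrac13\delta_0+\tfrac23\delta_1)$ with $\lambda$ the Lebesgue measure, and $X(\omega,i):=\omega$: any $g\in\rmS(\Omega)$ with $X\circ g=X$ a.e.\ must also fix the label $i$ a.e.\ (swapping labels over a set $A$ of first coordinates would push mass $\tfrac13\lambda(A)$ onto mass $\tfrac23\lambda(A)$), so the stabiliser of $X$ consists only of maps equal to the identity $\P$-a.e., and its invariance carries no information; in particular $W(\omega,i):=i$ is stabiliser-invariant but not $\sigma(X)$-measurable modulo $\P$. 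The deeper issue is that your factorization step never uses the Lipschitz property of $\map$, only graph invariance, and at that level of generality factorization is genuinely false: setting $\map(X\circ g):=W\circ g$ on the orbit of $X$ (well defined precisely because $X\circ g_1=X\circ g_2$ forces $g_1=g_2$ a.e.) produces an invariant map with $(X,\map X)_\sharp\P=\lambda\otimes(\tfrac13\delta_0+\tfrac23\delta_1)$, which is concentrated on no graph. So the Lipschitz bound must enter the factorization argument itself. This is exactly how the paper proceeds: it first treats $X$ piecewise constant on a dyadic segmentation, where equal-measure sub-cells of a single cell can be swapped at \emph{zero} $L^p$-cost (Corollary \ref{cor:mpsub}), deduces via martingale convergence (Theorem \ref{theo:stroock}) that $\map X$ is constant on each cell together with the pointwise bound \eqref{eq:perlafunz}, and only then passes to a general $X$ along $X_n=\E[X\mid\cB_n]\to X$, the uniform bound \eqref{eq:perlafunz} and the continuity of $\map$ carrying both the factorization and the Lipschitz estimate to the limit. (Your strategy could be repaired in its own spirit: run the stabiliser/ergodicity argument on \emph{one} product-form representative $X'$ of $\mu$, built via Corollary \ref{cor:repre} so that its fibers are copies of $([0,1],\mathrm{Leb})$, and then transfer to every $X$ with law $\mu$ by your step 1. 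But this selection-and-transfer idea is absent from the proposal, and for part (2) it additionally requires first extending $\map$ to all of $\cY$ — which is how the paper proves item (2), via Theorem \ref{cor:maximal-dissipativity} — since an arbitrary $\dom(\map)$ need not contain such a representative.)

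A second gap concerns the exponents. A swap of two sets of measure $\alpha$ compares an $L^q$-response with an $L^p$-perturbation, giving $(2\alpha)^{1/q}\,|l_\mu(x_1)-l_\mu(x_2)|\le L\,(2\alpha)^{1/p}\,\bigl(|x_1-x_2|+2\delta\bigr)$ up to errors, and the mass factors cancel only when $p=q$. Hence your claim that the pointwise $L$-Lipschitz bound of part (1) \enquote{follows from the same coupling inequality} when $p\ne q$ is unjustified: for $q<p$ the factor $(2\alpha)^{1/p-1/q}$ blows up as $\alpha\downarrow 0$, and for nonatomic $\mu$ one cannot localize at $x_1,x_2$ while keeping $\alpha$ bounded away from $0$; for $q>p$ the factor tends to $0$ and the argument would yield constancy of the sections, not the stated estimate. (To be fair, the paper's own displayed cancellation in the proof of \eqref{eq:perlafunz} is written in the form valid for $p=q$, so this subtlety is glossed over there as well; but your proposed reduction does not close it.) Finally, your appeal to \enquote{Lebesgue-type points} to upgrade an integrated inequality to a pointwise one is delicate, since ball-differentiation of $\mu$-integrable functions is not available for general measures on an infinite-dimensional $\Y$; the paper sidesteps this by proving the pointwise inequality \eqref{eq:perlafunz} directly for piecewise-constant $X$ and passing it to the limit along a.e.-convergent subsequences.
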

\begin{proof}
We prove item (1).
Let $\cN:= \{2^n \mid n \in \N\}$ and let $(\mathfrak P_N)_{N \in \cN}$ be a $\cN$-segmentation of $(\Omega, \cB, \P)$ as in Definition \ref{def:segm}, whose existence is granted by Lemma \ref{le:exseg}. Let us define 
  \[ \cB_n:= \sigma(\mathfrak P_{2^n}),\quad
    \cY_n:= L^p(\Omega, \cB_n, \P; \Y), \quad 
        \cZ_n:= L^q(\Omega, \cB_n, \P; \Z),\quad n \in \N.\]
  We divide the proof in several steps.

\smallskip \noindent
(a) \emph{If $X \in \cY_m$ for some $m \in \N$, then (there exists a
  unique representative of) $\map X$ (that) belongs to 
  $\cZ_m$ and
\begin{equation}\label{eq:perlafunz}
  |\map X(\omega')-\map X(\omega'')|_\Z\le L|X(\omega')-X(\omega'')|_\Y\quad
    \text{for every }\omega',\omega''\in \Omega.
  \end{equation}}%
Let $\Omega' \subset \Omega$ be a full $\P$-measure subset of $\Omega$
where both \eqref{eq:lebdiffgu} and Lemma \ref{le:separation} hold 
 for the $L^q(\Omega, \cB,
\P;\Z)$ function $\map X$.

Let us
fix $k \in I_m:=\{0,\dots,2^m-1\}$ and show that (a representative of)
$\map X $ is constant on $\Omega'_{m,k}:=
\Omega_{m,k} \cap \Omega'$, where $\mathfrak P_{2^m} :=
\{\Omega_{m,k}\}_{k \in I_m}$.

Let $\omega', \omega'' \in \Omega'_{m,k}$ with $\omega'\ne \omega''$. For every $n \in \N$ there exist $k(n;\omega'), k(n;\omega'') \in I_{n}$ such that $\omega' \in \Omega_{n,k(n;\omega')}$ and $\omega'' \in \Omega_{n,k(n;\omega'')}$. By Lemma \ref{le:separation} we know that for $n \in \N$ sufficiently large $\Omega_{n,k(n;\omega')}, \Omega_{n,k(n;\omega'')} \subset \Omega_{m,k}$ and $\Omega_{n,k(n;\omega')} \cap \Omega_{n,k(n;\omega'')} = \emptyset$. Thus, since $\P(\Omega_{n,k(n;\omega')})= \P(\Omega_{n,k(n;\omega'')})=2^{-n}$ for every $n \in \N$ (see Definition \ref{def:segm}), by Corollary \ref{cor:mpsub} we can find a measure-preserving isomorphism $g_n \in \rmS(\Omega)$ such that 
\[ (g_n)_\sharp \P |_{\Omega_{n,k(n;\omega')}} = \P |_{\Omega_{n,k(n;\omega'')}}\]
and $g_n$ is the identity outside $\Omega_{n,k(n;\omega')} \cup \Omega_{n,k(n;\omega'')}$. By \eqref{eq:invariant-maps} and by Lipschitz continuity of $\map$, we have
\[\left | \map X \circ g_n - \map X \right |_\cZ \le L\left | X \circ g_n - X \right |_\cY=0 \quad \text{for every
    integer $n$ sufficiently large}, \]
since $X$ is constant on the whole $\Omega_{m,k}$. This implies that
\[ 2^{-n} \int_{\Omega_{n,k(n;\omega')}} \map X \de \P = 2^{-n}
  \int_{\Omega_{n,k(n;\omega'')}} \map X \de \P  \quad \text{eventually}. \]
By definition of conditional expectation, this means that
\[ \E_\P \left [ \map X \mid \sigma \left (\mathfrak P_{2^n} \right )
  \right ] (\omega') = \E_\P \left [ \map X \mid \sigma \left
      (\mathfrak P_{2^n} \right ) \right ] (\omega'') \quad
  \text{eventually.}\]
Passing to the limit as $n \to + \infty$ we get by
\eqref{eq:lebdiffgu} that $\map X(\omega')=\map X(\omega'')$. This proves
that $\map X$ is $\P$-almost everywhere constant on $\Omega_{m,k}$;
being $k \in I_m$ arbitrary, we can find a representative of $\map X$
belonging to $\cZ_m$.
If $\omega', \omega'' \in \Omega$ and $\omega' \in \Omega_{m,i}$, $\omega''\in \Omega_{m,j}$, $i,j\in I_m$ we choose as $g \in \rmS(\Omega)$ a measure-preserving isomorphism induced by the permutation $\sigma \in \symg{I_m}$ that swaps $i$ and $j$ (see Corollary \ref{cor:isomor}), so that we get by Lipschitz continuity of $\map$ that
  \begin{displaymath}
    \frac 1{2^{(m-1)/2}} |\map X(\omega')-\map X(\omega'')|_\Z \le
    L \frac 1{2^{(m-1)/2   }} |X(\omega')-X(\omega'')|_\Y
  \end{displaymath}
  which yields \eqref{eq:perlafunz}.
  \smallskip \noindent
  
  (b) \emph{For every $X\in \cY$ there exists a unique $L$-Lipschitz
    map
    $\ff_X:\supp(X_\sharp \P)\to \Z$ such that
    $\map X(\omega) = \ff_{X}(X(\omega))$ for $\P$-a.e.~$\omega$.}

  Let $X \in \cY$;
  setting $X_n:=\E[X|\cB_n]\in \cY_n$, by Theorem \ref{theo:stroock}
  we have $X_n \to X$ (hence also $\map X_n \to \map X$). Let us consider two representatives of $\map X$ and $X$, a full
measure set $\Omega_0 \subset \Omega$ and a subsequence $(X_{n_k})_k$
s.t.~$X_{n_k}(\omega) \to X(\omega)$ and
$\map X_{n_k}(\omega) \to \map X (\omega)$ for every $\omega \in
\Omega_0$. By \eqref{eq:perlafunz} we have
\[ |\map X_n(\omega')-\map X_n(\omega) |_\Z \le
  L |X_n(\omega')-X_n(\omega)|_\Y
  \text{ for every }\omega, \omega' \in \Omega, \, n \in \N. \]
Passing to the limit in the above inequality for every couple $(\omega, \omega') \in \Omega_0^2$, we obtain that 
\[ |\map X(\omega')-\map X(\omega) |_\Z  \le L
  |X(\omega')-X(\omega)|_\Y \text{ for every }\omega, \omega' \in \Omega_0. \]
This gives the existence of a (unique) $L$-Lipschitz function
$\ff_X: \overline{X(\Omega_0)}\to \Z$
s.t.~$\map X(\omega) = \ff_{X}(X(\omega))$ for every $\omega \in
\Omega_0$. Notice that $\overline{X(\Omega_0)}\supset \supp(X_\sharp \P)$.
\smallskip \noindent

(c) \emph{If $X, X'\in \cY$ with $\mu=X_\sharp \P=X'_\sharp \P$, then $\ff_X=\ff_{X'}$ on $\supp(\mu)$. In particular, we can define
$\euler\map (\cdot,\mu):=\ff_X(\cdot)$ whenever $\mu=X_\sharp\P$.}

By hypothesis we have
\begin{equation}\label{eq:laprimacosa}
  (X, \map X)_\sharp \P = (X', \map X')_\sharp \P.
\end{equation}
By the previous claim,
the disintegration (see e.g. \cite[Theorem 5.3.1]{ags} for details on the disintegration theorem) of the common measure $\boldsymbol\mu:= (X, \map X)_\sharp
\P $
with respect to its first marginal $\mu$ is
given by $\delta_{\ff_X(\cdot)}$ which should coincide with
$\delta_{\ff_X'(\cdot)}$ $\mu$-a.e.~in $\Y$. Since $\ff_X$ and $\ff_{X'}$
are both
Lipschitz continuous and coincide $\mu$-a.e., they coincide on
$\supp(\mu)$.
\smallskip \noindent

(d)
\emph{The map $\euler\map$ is continuous on $\Sp \Y$.}

Let us consider a sequence $(x_n,\mu_n)_n$ in $\Sp\Y$ converging to
$(x,\mu)\in \Sp\Y$ and let us prove that there exists an increasing subsequence
$k\mapsto n(k)$ such that $\euler\map(x_{n(k)},\mu_{n(k)})\to
\euler\map(x,\mu)$ as $k\to\infty$.

By Proposition \ref{prop:pratelli}, we can find a limit map $X\in \cY$
and a sequence $X_n\in \cY$ converging to $X$ such that
$(X_n)_\sharp\P=\mu_n,\ X_\sharp \P=\mu$.
Since $\map X_n\to\map X$, 
we can then extract a subsequence $k\mapsto n(k)$
and find a set of full measure $\Omega_0\subset \Omega$ such that
$X_{n(k)}(\omega)\to X(\omega)$ and $Y_{n(k)}(\omega)\to
Y(\omega)$ as $k\to\infty$ for every $\omega\in
\Omega_0$, where $Y_n:=\map X_n,\ Y:=\map X$.

Let us fix $\eps>0$; since $x\in \supp(\mu)$ and $X(\Omega_0)\cap
\supp(\mu)$ is dense in $\supp(\mu)$, we can find
$\omega\in \Omega_0$ such that
$|X(\omega)-x|_\Y\le \eps$.
We then obtain
\begin{align*}
  |\euler\map(x_{n(k)},\mu_{n(k)})-\euler\map(x,\mu)|_\Z 
  &\le
    |\euler\map(x_{n(k)},\mu_{n(k)})-
    \euler\map(X_{n(k)}(\omega),\mu_{n(k)})|_\Z
  \\&\quad +
    |\euler\map(X_{n(k)}(\omega),\mu_{n(k)})-
  \euler\map(X(\omega),\mu)|_\Z
  \\&\quad+
  |\euler\map(X(\omega),\mu)-\euler\map(x,\mu)|_\Z
  \\&\le L|
  x_{n(k)}-X_{n(k)}(\omega)|_\Y+|Y_{n(k)}(\omega)-Y(\omega)|_\Z+
  L|X(\omega)-x|_\Y.
\end{align*}
Taking the $\limsup$ as $k\to\infty$ we get
\begin{displaymath}
  \limsup_{k\to\infty}
  |\euler\map(x_{n(k)},\mu_{n(k)})-\euler\map(x,\mu)|_\Z
  \le 2 L\eps
\end{displaymath}
and, since $\eps>0$ is arbitrary, we obtain the convergence.

\medskip \noindent
We prove item (2): by Theorem \ref{cor:maximal-dissipativity}, it is enough to prove the statement in case the map $\map$ is defined on the whole $\cY$. 
We can then apply the previous claim
with $\Z=\Y$ and $p=q=2.$
\end{proof}
In the particular case when $p=q=2$ and $\Y=\Z$ is a Hilbert space,
also $\lambda$-dissipativity \eqref{eq:140} is inherited from $\map$ to its representative map $\euler\map$. 

\begin{proposition}[$\lambda$-dissipative representations]
  \label{prop:monotonicity}
  Let us suppose that $p=2$, $\Y$ is a separable Hilbert space, and
  $\map:\cY\to \cY$ is an invariant Lipschitz $\lambda$-dissipative operator.
  Then, for every $\mu\in \mathcal P_2(\Y)$, the map $\euler \map(\cdot,\mu)$ of Theorem \ref{thm:continuous}
  is (pointwise) $\lambda$-dissipative, i.e.
  \begin{equation}
    \label{eq:6bis}
    \langle \euler \map(x,\mu)-\euler \map(x',\mu),x-x'\rangle_\Y\le \lambda |x-x'|^2
    \quad\text{for every }x,x'\in \supp(\mu).
  \end{equation}
\end{proposition}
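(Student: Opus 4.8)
The plan is to derive the pointwise inequality \eqref{eq:6bis} from the global $\lambda$-dissipativity \eqref{eq:140} of $\map$ by a localization argument: I will build two random variables with the \emph{same} law $\mu$ whose values concentrate near $x$ and near $x'$ respectively and which coincide off a small set, so that the global inequality collapses onto the single pair $(x,x')$. Fix $x,x'\in\supp(\mu)$ with $x\neq x'$ (the case $x=x'$ being trivial) and abbreviate $a:=\euler\map(x,\mu)$, $a':=\euler\map(x',\mu)$, $u:=x-x'$. Since $\mu\in\mathcal P_2(\Y)$ and $(\Omega,\cB,\P)$ is a nonatomic standard Borel probability space, there exists $X\in\cY$ with $X_\sharp\P=\mu$; note $X(\omega)\in\supp(\mu)$ for $\P$-a.e.~$\omega$, so by Theorem \ref{thm:continuous}(1) we have $\map X(\omega)=\euler\map(X(\omega),\mu)$ a.e., and $\euler\map(\cdot,\mu)$ is $L$-Lipschitz on $\supp(\mu)$.

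For $\eps\in(0,|u|/2)$ set $A:=X^{-1}(B(x,\eps))$ and $A':=X^{-1}(B(x',\eps))$; these are disjoint and have strictly positive $\P$-measure because $x,x'\in\supp(\mu)$ forces $\mu(B(x,\eps)),\mu(B(x',\eps))>0$. Using the nonatomicity of $\P$, choose $A_0\subset A$ and $A_0'\subset A'$ with $\P(A_0)=\P(A_0')=:\delta_\eps>0$, and let $g\in\rmS(\Omega)$ be the measure-preserving involution swapping $A_0$ and $A_0'$ and equal to the identity elsewhere, as provided by Corollary \ref{cor:mpsub}. Setting $X':=g^*X=X\circ g$, we have $X'_\sharp\P=g_\sharp(X_\sharp\P)=\mu$, while $X'=X$ on $\Omega\setminus(A_0\cup A_0')$, $X'(\omega)\in B(x',\eps)$ for a.e.~$\omega\in A_0$, and $X'(\omega)\in B(x,\eps)$ for a.e.~$\omega\in A_0'$.

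Now apply \eqref{eq:140} to the pair $X,X'$ and express $\map X,\map X'$ through $\euler\map(\cdot,\mu)$ (legitimate since both have law $\mu$). Because the integrand vanishes where $X=X'$, we are left with
\[
\int_{A_0\cup A_0'}\langle \euler\map(X,\mu)-\euler\map(X',\mu),\,X-X'\rangle_\Y\,\de\P
\ \le\ \lambda\int_{A_0\cup A_0'}|X-X'|_\Y^2\,\de\P .
\]
On $A_0$ one has $X\in B(x,\eps)$, $X'\in B(x',\eps)$, so the $L$-Lipschitz continuity of $\euler\map(\cdot,\mu)$ gives $|\euler\map(X,\mu)-a|\le L\eps$, $|\euler\map(X',\mu)-a'|\le L\eps$, and $|(X-X')-u|\le 2\eps$; expanding the inner product shows the integrand equals $\langle a-a',u\rangle_\Y$ up to an error bounded uniformly in $\omega$ by $C\eps$, with $C$ depending only on $L,|u|,|a-a'|$. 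On $A_0'$ the roles of $x$ and $x'$ are exchanged, and the same expansion again yields $\langle a-a',u\rangle_\Y+O(\eps)$, while $|X-X'|_\Y^2=|u|^2+O(\eps)$ on all of $A_0\cup A_0'$. Dividing the displayed inequality by $\P(A_0\cup A_0')=2\delta_\eps>0$ and letting $\eps\downarrow 0$ gives $\langle a-a',u\rangle_\Y\le\lambda|u|^2$, which is \eqref{eq:6bis}.

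The real content—and the only genuine obstacle—lies in the construction of the second paragraph: producing a competitor $X'$ with \emph{exactly} the same law $\mu$ that swaps the mass near $x$ with the mass near $x'$. This is precisely what lets the global dissipativity localize to a single pair of points, and it is made possible by the measure-preserving swap of Corollary \ref{cor:mpsub} combined with the invariance of $\map$ (which guarantees that the one representative $\euler\map(\cdot,\mu)$ governs every $X$ of law $\mu$, including $X'=X\circ g$). The surrounding Lipschitz expansions are routine and carry no difficulty.
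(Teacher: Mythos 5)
Your proof is correct, and it shares the paper's key mechanism---a measure-preserving swap that collapses the global $L^2$-dissipativity of $\map$ onto a single pair of points---but the execution is genuinely different. The paper first reduces to $\lambda=0$ via $\map^\lambda:=\map-\lambda\ii_\cY$ (Remark \ref{rmk:lambdamaps}) and then re-runs the internal structure of the proof of Theorem \ref{thm:continuous}: for $X$ piecewise constant on the cells of the dyadic segmentation it swaps two whole cells by a permutation-induced isomorphism (Corollary \ref{cor:isomor}), which localizes the inequality \emph{exactly} (no error terms), and it then reaches general $X$ and general $\mu$ through the martingale approximation $X_n=\E_\P[X\mid \cB_n]$ already used in claim (b) of that proof. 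You instead keep $\lambda$ as it is, take an arbitrary $X$ with law $\mu$, swap the preimages $X^{-1}(B(x,\eps))$ and $X^{-1}(B(x',\eps))$ (Corollary \ref{cor:mpsub}, after equalizing measures by nonatomicity), and absorb the resulting $O(\eps)$ errors using the $L$-Lipschitz continuity of $\euler\map(\cdot,\mu)$ on $\supp(\mu)$; that is, you use Theorem \ref{thm:continuous}(1) purely as a black box rather than reproducing its proof. What each buys: your argument is self-contained at the statement level, needs no reduction to the dissipative case and no discretization-plus-limit step, at the cost of the $\eps$-bookkeeping; the paper's argument has no error terms (the swap is exact on partition cells) and recycles machinery already in place, which keeps it shorter in context. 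The two facts you invoke without proof---that every $\mu\in\prob_2(\Y)$ is the law of some $X\in\cY$ over the nonatomic standard Borel space, and that nonatomicity allows choosing subsets of $A$, $A'$ of equal positive measure---are standard and are used at the same level of informality by the paper itself (e.g.~in the proof of Proposition \ref{prop:pratelli}), so they do not constitute gaps.
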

\begin{remark}\label{rmk:lambdamaps}
Recalling Remark \ref{rem:transff}, $\map:\cY\to\cY$ is an invariant Lipschitz $\lambda$-dissipative operator if and only if its $\lambda$-transformation $\map^\lambda:=\map-\lambda \ii_\cY$ is an invariant Lipschitz dissipative operator.
Moreover, by applying Theorem \ref{thm:continuous} to both $\map$ and $\map^\lambda$, we can identify
\[\euler{\map^\lambda}(x,\mu)\equiv \euler \map(x,\mu)-\lambda x,\quad\text{for every }(x,\mu)\in\Sp{\Y},\]
so that both $\euler\map$ and $\euler{\map^\lambda}$ satisfy \eqref{eq:5} with $\map$ and $\map^\lambda$ respectively.
\end{remark}
\begin{proof}
Thanks to Remark \ref{rmk:lambdamaps}, proving the $\lambda$-dissipativity in \eqref{eq:6bis} for $\map$ is equivalent to prove the $0$-dissipativity result in \eqref{eq:6bis} for $\map^\lambda$.

  We keep the same notation of the proof of Theorem \ref{thm:continuous} applied to $\map^\lambda$.
  The case when $\mu=X_\sharp \P$, with $X\in \cY_m$ for some $m\in \N$,
  follows as in claim (a) of the proof of Theorem \ref{thm:continuous}: we can assume that
  $\map^\lambda X$ is constant on every $\Omega_{m,k}$. If
  $\omega',\omega''\in \Omega$ and $\omega'\in \Omega_{m,i}$,
  $\omega'' \in \Omega_{m,j}$, $i,j\in I_m$ and $g\in \rmS(\Omega)$
  is a measure-preserving isomorphism induced by the permutation that swaps
  $i$ and $j$ as in Corollary \ref{cor:isomor}, we get
  \begin{displaymath}
    \frac 1{2^{m-1}}\langle \map^\lambda X(\omega')-\map^\lambda
    X(\omega''),X(\omega')-X(\omega'')\rangle_\Y
    \le 0.
  \end{displaymath}
  We can eventually argue by approximation, as in claim (b) of the proof of Theorem \ref{thm:continuous}, and using
  the
  representation of $\map^\lambda X$ in terms of $\euler {\map^\lambda}$ to get
   \begin{equation}
    \label{eq:6}
    \langle \euler {\map^\lambda}(x,\mu)-\euler {\map^\lambda}(x',\mu),x-x'\rangle_\Y\le 0
    \quad\text{for every }x,x'\in \supp(\mu).
  \end{equation}
\end{proof}

\begin{proposition}[Stability of Lipschitz representations] If $\map_n, \map:\cY\to\cZ$ are $L$-Lipschitz invariant maps and $D\subset \cY$ is an invariant closed set,
  such that
  \begin{equation}
    \label{eq:15}
    \map_n X\to \map X\quad\text{for every }X\in D,
  \end{equation}
  then
  setting $\tilde D:=\big\{X_\sharp\P:X\in D\big\}$, we have
  $\euler{\map_n}\to \euler{\map}$
  pointwise in $\Sp{\Y,\tilde D}$, where $\euler \map$ is as in 
  Theorem \ref{thm:continuous}.
\end{proposition}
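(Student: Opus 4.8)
The plan is to reduce the statement to a single pointwise estimate and then reuse, almost verbatim, the approximation argument that establishes continuity of $\euler\map$ in step (d) of the proof of Theorem \ref{thm:continuous}. First I would fix $(x,\mu)\in\Sp{\Y,\tilde D}$ and choose $X\in D$ with $X_\sharp\P=\mu$. By Theorem \ref{thm:continuous} each section $\euler{\map_n}(\cdot,\mu)$ and the limit section $\euler{\map}(\cdot,\mu)$ is $L$-Lipschitz on $\supp(\mu)$, and by \eqref{eq:5bis} we have $\map_n X(\omega)=\euler{\map_n}(X(\omega),\mu)$ and $\map X(\omega)=\euler{\map}(X(\omega),\mu)$ for $\P$-a.e.~$\omega$. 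Thus the values $\euler{\map_n}(x,\mu)$ are controlled, through Lipschitzianity, by the values of $\map_n X$ along the set $X(\Omega_0)$, which is dense in $\supp(\mu)$.

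The main obstacle is that hypothesis \eqref{eq:15} only yields $\map_n X\to\map X$ in the norm of $\cZ$, and not pointwise, so I cannot simply evaluate $\euler{\map_n}(X(\omega),\mu)=\map_n X(\omega)$ at a fixed $\omega$ and pass to the limit. I would circumvent this exactly as in step (d), via the subsequence characterization of convergence: it suffices to show that every subsequence of $(\map_n)_n$ admits a further subsequence along which $\euler{\map_n}(x,\mu)\to\euler{\map}(x,\mu)$. Given such a subsequence, since $\map_n X\to\map X$ in $\cZ$ I extract a further (not relabelled) subsequence with $\map_n X(\omega)\to\map X(\omega)$ for $\P$-a.e.~$\omega$, and fix a full $\P$-measure set $\Omega_0$ on which simultaneously the two representations hold for all these $n$, the a.e.~convergence holds, and $\overline{X(\Omega_0)}\supset\supp(\mu)$.

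Then for fixed $\eps>0$, using $x\in\supp(\mu)$ I pick $\omega\in\Omega_0$ with $|X(\omega)-x|_\Y\le\eps$ and estimate, by the triangle inequality together with the $L$-Lipschitzianity of both sections and the representations at $\omega$,
\[
|\euler{\map_n}(x,\mu)-\euler{\map}(x,\mu)|_\Z\le 2L\,|X(\omega)-x|_\Y+|\map_n X(\omega)-\map X(\omega)|_\Z.
\]
Taking $\limsup_n$ along the subsequence annihilates the second term and leaves $2L\eps$; as $\eps>0$ is arbitrary, the subsequence converges to $\euler{\map}(x,\mu)$, and since the initial subsequence was arbitrary the whole sequence converges. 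The only points requiring care are that $X$ is chosen in the closed invariant set $D$ so that \eqref{eq:15} applies to it, and that the Lipschitz constant $L$ is uniform in $n$, which keeps the constant $2L$ in the estimate independent of the index along which the limit is taken.
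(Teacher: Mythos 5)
Your proof is correct, but it takes a genuinely different route from the paper's. The paper works at the level of whole sections rather than single points: it sets $\ff_n:=\euler{\map_n}(\cdot,\mu)$, notes that \eqref{eq:15} makes $(\ff_n)_n$ a Cauchy sequence in $L^q(\Y,\mu;\Z)$, and upgrades this to pointwise Cauchy convergence on $\supp(\mu)$ by an averaging trick: the ball averages $\ff_{n,\rho}(x)=\frac 1{\mu(B(x,\rho))}\int_{B(x,\rho)}\ff_n\,\d\mu$ converge as $n\to\infty$ for each fixed $\rho>0$, while $|\ff_{n,\rho}(x)-\ff_n(x)|_\Z\le L\rho$ by equi-Lipschitzianity; the pointwise limit $\gg(\cdot,\mu)$ is then shown to be an $L$-Lipschitz continuous representation of $\map$ on $\Sp{\Y,\tilde D}$ and is finally identified with $\euler{\map}$ through the uniqueness Lemma \ref{lem:uniqueness}. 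You instead compare directly with $\euler{\map}$, whose existence and $L$-Lipschitz sections are already guaranteed by Theorem \ref{thm:continuous} applied to $\map$ (which is $L$-Lipschitz and invariant by hypothesis): you fix $(x,\mu)$, pass from norm convergence to $\P$-a.e.\ convergence along subsequences, and conclude by the triangle inequality at a point $X(\omega)$ close to $x$, using density of $X(\Omega_0)$ in $\supp(\mu)$, the representations \eqref{eq:5bis}, and the uniformity of $L$; the subsequence principle then restores convergence of the full sequence. Your argument is shorter and dispenses with both the mollification and the uniqueness lemma, precisely because the candidate limit $\euler\map$ is available a priori; the paper's argument is constructive and produces the limit section without presupposing one, a scheme that remains usable when the limit is not known beforehand to admit a continuous representation (compare Lemma \ref{le:last-but-not-least}). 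One small point you should make explicit: for $\euler{\map_n}(X(\omega),\mu)$ to be defined and for the $L$-Lipschitz bound (which holds only on $\supp(\mu)$) to apply, the set $X^{-1}(\supp(\mu))$, which has full $\P$-measure, must also be included in your choice of $\Omega_0$, so that $X(\omega)\in\supp(\mu)$ for every $\omega\in\Omega_0$.
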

\begin{proof} We fix $\mu\in \tilde D$
and we set $\ff_n:=\euler{\map_n}(\cdot,\mu):\supp(\mu)\to \Z$.
We observe that $\ff_n$ are $L$-Lipschitz
and form a Cauchy sequence in $L^q(\Y,\mu;\Z)$: 
we denote by
$\tilde \ff$ its limit.
For every $x\in \supp(\mu)$ and $\rho>0$, we can set
\begin{equation}
  \label{eq:17}
  \ff_{n,\rho}(x):=\frac1{\mu(B(x,\rho))}\int_{B(x,\rho)}\ff_n(u)\,\d\mu(u),\quad
  \ff_{\rho}(x):=\frac1{\mu(B(x,\rho))}\int_{B(x,\rho)}\tilde \ff(u)\,\d\mu(u).
\end{equation}
We have $\ff_{n,\rho}(x)\to \ff_\rho(x)$ for every $x\in \supp(\mu)$ and
every $\rho>0$. On the other hand
\[|\ff_{n,\rho}(x)-\ff_n(x)|_\Z\le L\rho,\]
so that a simple argument using the triangle inequality shows that
the sequence $(\ff_n(x))_{n\in \N}$ is Cauchy in $\Z$ for every $x\in
\supp(\mu)$ and its pointwise limit $\gg(\cdot,\mu)$
is $L$-Lipschitz and represents
$\map$
as in \eqref{eq:5} for every $X \in D$. Arguing as in claim (d) in the proof of Theorem \ref{thm:continuous} and using the
continuity of $\map$, we can eventually deduce that $\gg$ is
continuous in $\Sp{\Y,\tilde D}$. The fact that $\gg=\euler \map$ comes from Lemma \ref{lem:uniqueness}.
\end{proof}
When the maps $\map_n$
are not uniformly Lipschitz, we can still obtain a limiting representation. 
\begin{lemma}
    \label{le:last-but-not-least}
     Let 
$D \subset \cY$ and $\map_n, \map: D \to \cZ$ be maps such that $\map_n$ are law invariant, $\map_n$ converge pointwise to $\map$ on $D$ 
as $n\to\infty$ and there exist Borel functions $\ff_n: \Sp{\Y, \iota(D)} \to \Z$ such that 
\[ \text{for every $n \in \N$ and every $X\in D$,\quad
    $\map_n X(\omega)=\ff_n(X(\omega),X_\sharp\P)$ for a.e.~$\omega\in
    \Omega$.}
\]
Then $\map$ is law invariant and, for every $\mu \in \iota(D)$, there exists a map $\ff[\mu] \in 
 L^q(\Y, \mu; \Z)$ such that, for every $X \in D$ there exists an increasing subsequence $k \mapsto n_k$ such that
\[ \lim_{k} \ff_{n_k}(X(\omega), X_\sharp \P) = \ff[X_\sharp \P](X(\omega)) = \map X (\omega) \text{ for a.e.~} \omega\in
    \Omega.\]
\end{lemma}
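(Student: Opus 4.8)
The plan is to construct the limit object $\ff[\mu]$ directly as an $L^q$-limit of the slices $\ff_n(\cdot,\mu)$, and then to read off both the representation of $\map$ and its law invariance from this construction. First I would fix $\mu\in\iota(D)$ and pick any $X\in D$ with $X_\sharp\P=\mu$. For each $n$ the slice $\ff_n(\cdot,\mu)$ is a Borel function on $\supp(\mu)$ which does not depend on the chosen $X$, and since $X_\sharp\P=\mu$ a change of variables gives $\int_\Y|\ff_n(x,\mu)|_\Z^q\,\d\mu(x)=\int_\Omega|\map_n X|_\Z^q\,\d\P=|\map_n X|_\cZ^q<\infty$, so $\ff_n(\cdot,\mu)\in L^q(\Y,\mu;\Z)$; the same change of variables yields $|\ff_n(\cdot,\mu)-\ff_m(\cdot,\mu)|_{L^q(\Y,\mu;\Z)}=|\map_n X-\map_m X|_\cZ$. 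Because $\map_n X\to\map X$ in $\cZ$, the right-hand side is Cauchy, hence $(\ff_n(\cdot,\mu))_n$ is Cauchy in $L^q(\Y,\mu;\Z)$ and converges to some $\ff[\mu]\in L^q(\Y,\mu;\Z)$; since the slices $\ff_n(\cdot,\mu)$ are intrinsic to $\mu$, so is the limit $\ff[\mu]$.

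Next I would establish the representation. Pulling back the convergence $\ff_n(\cdot,\mu)\to\ff[\mu]$ in $L^q(\Y,\mu;\Z)$ through any $X\in D$ with $X_\sharp\P=\mu$ gives $\ff_n(X(\cdot),\mu)\to\ff[\mu](X(\cdot))$ in $\cZ$; but $\ff_n(X(\cdot),\mu)=\map_n X\to\map X$ in $\cZ$, so by uniqueness of the limit $\map X=\ff[\mu]\circ X$ $\P$-a.e. To obtain the stated subsequence, I would extract an increasing $k\mapsto n_k$ along which $\ff_{n_k}(\cdot,\mu)\to\ff[\mu]$ holds $\mu$-a.e.; since $X_\sharp\P=\mu$, the preimage under $X$ of the corresponding full $\mu$-measure set has full $\P$-measure, so that $\ff_{n_k}(X(\omega),\mu)\to\ff[\mu](X(\omega))=\map X(\omega)$ for a.e.~$\omega$, which is exactly the asserted convergence.

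Finally I would deduce law invariance of $\map$. I would first note that $D$ is itself law invariant: if $X\in D$ and $X'\in\cY$ has $X'_\sharp\P=\mu=X_\sharp\P$, then (fixing any $n$) the function $Y':=\ff_n(X'(\cdot),\mu)$ lies in $\cZ$ and satisfies $(X',Y')_\sharp\P=(\ii_\Y,\ff_n(\cdot,\mu))_\sharp\mu=(X,\map_n X)_\sharp\P$, so law invariance of $\map_n$ forces $X'\in D$. Now given $X\in D$ and $(X',Y')\in\cY\times\cZ$ with $(X,\map X)_\sharp\P=(X',Y')_\sharp\P$, the first marginals agree, hence $X'_\sharp\P=\mu$ and $X'\in D$; using $\map X=\ff[\mu]\circ X$ and $\map X'=\ff[\mu]\circ X'$ from the representation, both $(X',Y')_\sharp\P$ and $(X',\map X')_\sharp\P$ coincide with $(\ii_\Y,\ff[\mu])_\sharp\mu$. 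Choosing a Borel representative of $\ff[\mu]$, this measure is concentrated on the graph of $\ff[\mu]$, so the event $\{\omega:Y'(\omega)\neq\ff[\mu](X'(\omega))\}$ is $\P$-negligible, whence $Y'=\ff[\mu]\circ X'=\map X'$ a.e.\ and $\map$ is law invariant. The two points that require genuine care are the independence of $\ff[\mu]$ from the representative $X$ and this last disintegration-type step, which turns equality of the joint laws into $\P$-a.e.\ equality of the $\cZ$-components.
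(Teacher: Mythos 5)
Your proof is correct, and it reaches the conclusion by a genuinely different construction than the paper's. The paper fixes one $X$ with $X_\sharp\P=\mu$, extracts a subsequence along which $\map_{n_k}X\to \map X$ holds $\P$-a.e., and defines $\ff[\mu]$ as the pointwise limit $\lim_k\ff_{n_k}(\cdot,\mu)$ on the Borel set where that limit exists (zero elsewhere); since both the subsequence and the limit a priori depend on $X$, it must then invoke a disintegration argument (as in claim (c) of the proof of Theorem \ref{thm:continuous}) to show that two representatives $X,X'$ of the same $\mu$ produce the same function $\mu$-a.e. You instead exploit the fact that, when $X_\sharp\P=\mu$, composition with $X$ is an isometry from $L^q(\Y,\mu;\Z)$ into $\cZ$, so that $|\ff_n(\cdot,\mu)-\ff_m(\cdot,\mu)|_{L^q(\Y,\mu;\Z)}=|\map_n X-\map_m X|_{\cZ}$; the slices thus inherit the Cauchy property from $(\map_n X)_n$ and converge in $L^q(\Y,\mu;\Z)$ to a limit $\ff[\mu]$ that is intrinsic to $\mu$ by construction. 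This buys you well-definedness for free, full-sequence $L^q$-convergence rather than only subsequential a.e.\ convergence (the subsequence in the statement is a mere extraction, and a single subsequence works simultaneously for all $X$ with law $\mu$), and it confines the disintegration-type reasoning to one place. That place is the law-invariance step, which is the second real difference: the paper disposes of it with the one-line remark that a pointwise limit of law invariant maps is law invariant, whereas you give the actual argument --- first proving that $D$ itself is law invariant by feeding the competitor $Y'=\ff_n(X'(\cdot),\mu)$ into the law invariance of $\map_n$, then observing that $(X',Y')_\sharp\P=(\ii_\Y,\ff[\mu])_\sharp\mu$ is concentrated on the graph of a Borel representative of $\ff[\mu]$, so that $Y'=\ff[\mu]\circ X'=\map X'$ $\P$-a.e. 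Your argument makes essential use of the representation hypothesis on the $\ff_n$ (to manufacture the competitor $Y'$), which is precisely what legitimizes the paper's terse remark in this setting; both of the delicate points you flagged (independence of $\ff[\mu]$ from $X$, and passing from equality of joint laws to a.e.\ equality) are handled correctly.
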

\begin{proof}
    First of all notice that $\map$ is law invariant, being pointwise limit of law invariant maps. Let $\mu \in \iota(D)$ be fixed. If $X\in D$ is such that $X_\sharp \P=\mu$, we can find an increasing subsequence $k \mapsto n_k$ such that $\map_{n_k} X \to \map X$ $\P$-a.e.~in $\Omega$ and define 
\[ \ff_X(x):= \begin{cases} \lim_{k \to + \infty}\ff_{n_k}(x, \mu) \quad &\text{ if } x \in E, \\ 0 \quad &\text{ else}, \end{cases}\]
where $E \subset \Y$ is the Borel set of points in $\Y$ where $\lim_{k \to + \infty}\ff_{n_k}(x, \mu)$ exists. Let us now show that $\ff_X$ represents $\map$, i.e.
\[ \text{
    $\map X(\omega)=\ff_X (X(\omega))$ for a.e.~$\omega\in
    \Omega$.}
\]
We know that 
\[ \text{for every $k \in \N$ \quad 
    $\map_{n_k} X(\omega)=\ff_{n_k}(X(\omega),X_\sharp\P)$ for a.e.~$\omega\in
    \Omega$,}
\]
so that, choosing representatives of $X$, $\map X$, $\map_{n_k}X$, we can find a full $\P$-measure set $\Omega_0 \subset \Omega$ such that
\[ \map X(\omega) = \lim_{k \to + \infty} \map_{n_k}X (\omega) = \lim_{k \to +\infty} \ff_{n_k}(X(\omega), X_\sharp \P) \quad \text{ for every }\omega \in \Omega_0.
\]
Thus, for every $\omega \in \Omega_0$, $X(\omega) \in E$ and then $\map X(\omega) = f_X(X(\omega))$. If now $X' \in D$ is such that ${X'}_\sharp \P=\mu$, arguing as in claim (c) of the proof of Theorem \ref{thm:continuous}, it is easy to see that $\ff_X=\ff_{X'}$ $\mu$-almost everywhere.
\end{proof}
\medskip

 We eventually apply the previous results to  (possibly multivalued) maximal $\lambda$-dissipative operators $\Bb\subset\cH\times\cH$ (with $p=q=2$ and $\Y=\Z$) which are invariant by measure-preserving isomorphisms. We show that the invariance property is inherited by the associated resolvent operator, Yosida approximation, minimal selection and semigroup which also enjoy a map-representation property.

We denote by $\dommmo:=\Big\{X_\sharp\P:X\in \dom(\Bb)\Big\}$ the image in $\mathcal P_2(\X)$
of the domain of
$\Bb$.
\begin{theorem}[Structure of resolvents, Yosida approximations and semigroups]
\label{thm:invTOlawinv}
  Let $\Y$ be a separable Hilbert space and let $p=2$. Let $\Bb \subset \cH \times \cH$ be a maximal $\lambda$-dissipative operator which is invariant by measure-preserving isomorphisms. Then for every $0<\tau<1/\lambda^+,\ t\ge0$
  the operators $ \Bb, \Bb_\tau, \resolvent\tau,\Sgp_t,\Bb^\circ$ are
  law invariant. Moreover there exist (uniquely defined) continuous maps $\jj_\tau:\Sp\X\to \X$, $\bb_\tau: \Sp \X \to \X$, and $\ss_t:\Sp{\X,\overline{\dommmo}}\to \X$ such that
  \begin{enumerate}
  \item \label{eq:lip1} for every $\mu \in \prob_2(\X)$, the map  $\jj_\tau(\cdot,\mu): \supp(\mu) \to \X$ is $(1-\lambda \tau)^{-1}$-Lipschitz continuous, for $0<\tau< 1/\lambda^+$,
   \item \label{eq:lip3} for every $\mu \in \prob_2(\X)$, the map  $\bb_\tau(\cdot,\mu): \supp(\mu) \to \X$ is $\frac{2-\lambda \tau}{\tau(1-\lambda \tau)}$-Lipschitz continuous, for $0<\tau< 1/\lambda^+$,
  \item \label{eq:lip2} for every $\mu \in \overline{\dommmo}$, the map $\ss_t(\cdot,\mu): \supp(\mu) \to \X$ is $e^{\lambda t}$-Lipschitz continuous,
  \end{enumerate}
  and
  \begin{align} \label{eq:7-1} \text{ for every $X \in \cH$, } \resolvent\tau X(\omega)=\jj_\tau(X(\omega),X_\sharp\P) \text{ for $\P$-a.e.~$\omega \in \Omega$,}\\
   \label{eq:7-3} \text{ for every $X \in \cH$, } \Bb_\tau X(\omega)=\bb_\tau(X(\omega),X_\sharp\P) \text{ for $\P$-a.e.~$\omega \in \Omega$,}\\
  \label{eq:7-2} \text{ for every $X \in \overline{\dom(\Bb)}$, } \Sgp_t X(\omega)=\ss_t(X(\omega),X_\sharp\P) \text{ for $\P$-a.e.~$\omega \in \Omega$,}
  \end{align}
  together with the invariance and semigroup properties
  \begin{equation}
    \label{eq:8}
    \begin{gathered}
      \mu\in \overline{\dommmo}\quad\Rightarrow\quad
      \ss_t(\cdot,\mu)_\sharp\mu\in  \overline{\dommmo};\qquad
      \mu\in {\dommmo}\quad\Rightarrow\quad
      \ss_t(\cdot,\mu)_\sharp\mu\in  {\dommmo},\\
      \ss_{t+h}(x,\mu)=\ss_h(\ss_t(x,\mu),\ss_t(\cdot,\mu)_\sharp\mu)\quad
      \text{for every }(x,\mu) \in \Sp{\X,\overline{\dommmo}},\quad t,h\ge0.
    \end{gathered}
  \end{equation}
  Finally, for every $\mu\in \dommmo$, there exists a map
  $\bb^\circ(\cdot,\mu) \in L^2(\X, \mu; \X)$ such that
  for every $X\in \cH$
  \begin{equation}
    \label{eq:9}
    \text{ if $X_\sharp \P=\mu$ then $X \in \dom(\Bb)$, } \Bb^\circ X(\omega)=\bb^\circ(X(\omega),\mu) \text{ for $\P$-a.e.~$\omega \in \Omega$.}
  \end{equation}
  For every $\mu \in \dommmo$,
  the map $\bb^\circ(\cdot,\mu)$ is $\lambda$-dissipative in a set $\X_0\subset \X$
  of full $\mu$-measure and satisfies
  \begin{equation}
    \label{eq:10}
    \lim_{h\downarrow0}\int\bigg|\frac 1h(\ss_{t+h}(x,\mu)-\ss_t(x,\mu))-
    \bb^\circ(\ss_t(x,\mu),\ss_t(\cdot,\mu)_\sharp \mu)\bigg|^2\,\d\mu(x)=0\quad
    t\ge0.
  \end{equation}
\end{theorem}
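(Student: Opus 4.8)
The plan is to derive every representation from the structural results of Section~\ref{subsec:invariant-maps}, treating the operators in the order $\resolvent\tau,\Bb_\tau,\Sgp_t,\Bb^\circ$. First I would record law invariance: Proposition~\ref{prop:resdiscr} already gives that $\resolvent\tau,\Bb_\tau,\Sgp_t,\Bb^\circ$ are invariant by measure-preserving isomorphisms, and since $\resolvent\tau,\Bb_\tau$ are globally defined Lipschitz maps, $\Sgp_t$ is Lipschitz on the closed set $\overline{\dom(\Bb)}$, and $\Bb$ is closed (being maximal $\lambda$-dissipative), Proposition~\ref{prop:map-invariant} promotes $\Bb,\resolvent\tau,\Bb_\tau,\Sgp_t$ to law invariance (the case of $\Bb^\circ$ is postponed). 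For $\resolvent\tau$ and $\Bb_\tau$, which are defined on all of $\cH$ and $L$-Lipschitz with $L=(1-\lambda\tau)^{-1}$, resp.~$L=\frac{2-\lambda\tau}{\tau(1-\lambda\tau)}$, I would apply Theorem~\ref{thm:continuous}(1) (with $\Y=\Z$ Hilbert and $p=q=2$) to obtain the unique continuous maps $\jj_\tau,\bb_\tau:\Sp\X\to\X$, whose sections inherit the respective Lipschitz constants; this yields items~(1)--(2) together with \eqref{eq:7-1} and \eqref{eq:7-3}.

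For $\Sgp_t$ the delicate point is the domain of $\ss_t$. Theorem~\ref{thm:continuous}(2) applied to the $e^{\lambda t}$-Lipschitz invariant map $\Sgp_t:\overline{\dom(\Bb)}\to\overline{\dom(\Bb)}$ produces $\ss_t$ on $\Sp{\X,\iota(\overline{\dom(\Bb)})}$, so I must check $\iota(\overline{\dom(\Bb)})=\overline{\dommmo}$. The inclusion $\subset$ is immediate from the $1$-Lipschitzianity of $\iota:\cH\to(\mathcal P_2(\X),W_2)$. For $\supset$, given $\mu=\lim_n\mu_n$ with $\mu_n\in\dommmo$, I would fix any $X$ with $X_\sharp\P=\mu$ and use Proposition~\ref{prop:pratelli} to build $Y_n$ with $(Y_n)_\sharp\P=\mu_n$ and $|X-Y_n|_\cH\le W_2(\mu,\mu_n)+1/n\to0$; law invariance of $\dom(\Bb)$ forces $Y_n\in\dom(\Bb)$, hence $X\in\overline{\dom(\Bb)}$. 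This identification gives \eqref{eq:7-2} and item~(3).

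The identities \eqref{eq:8} I would read off by fixing a representative $X$ with $X_\sharp\P=\mu$. Because $\Sgp_t$ preserves $\overline{\dom(\Bb)}$ (resp.~$\dom(\Bb)$), applying $\iota$ and \eqref{eq:7-2} gives $\ss_t(\cdot,\mu)_\sharp\mu=\iota(\Sgp_tX)\in\overline{\dommmo}$ (resp.~$\in\dommmo$). For the composition law I would write $\Sgp_{t+h}X=\Sgp_h(\Sgp_tX)$, set $\mu':=\ss_t(\cdot,\mu)_\sharp\mu$, and apply \eqref{eq:7-2} twice to get $\ss_{t+h}(X(\omega),\mu)=\ss_h(\ss_t(X(\omega),\mu),\mu')$ for a.e.~$\omega$; as both sides are continuous in the first variable and agree on a dense subset of $\supp(\mu)$, the identity extends to every $x\in\supp(\mu)$.

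The main obstacle is $\Bb^\circ$, which is neither continuous nor everywhere defined, so Theorem~\ref{thm:continuous} is unavailable. Here I would invoke Lemma~\ref{le:last-but-not-least} with $\map_n:=\Bb_{\tau_n}$ ($\tau_n\downarrow0$), $D:=\dom(\Bb)$ and $\ff_n:=\bb_{\tau_n}$: the pointwise convergence $\Bb_{\tau_n}X\to\Bb^\circ X$ on $\dom(\Bb)$ from \eqref{eq:btaucircD} together with the law invariance of the $\Bb_{\tau_n}$ yields law invariance of $\Bb^\circ$ and, for each $\mu\in\dommmo$, a map $\bb^\circ(\cdot,\mu)\in L^2(\X,\mu;\X)$ with a set $\X_0\subset\X$ of full $\mu$-measure on which $\bb_{\tau_{n_k}}(\cdot,\mu)\to\bb^\circ(\cdot,\mu)$; this is \eqref{eq:9}, the constraint $X\in\dom(\Bb)$ being forced by law invariance of the domain. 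The pointwise $\lambda$-dissipativity of $\bb^\circ(\cdot,\mu)$ on $\X_0$ then follows by letting $\tau_{n_k}\downarrow0$ in the $\frac{\lambda}{1-\lambda\tau_{n_k}}$-dissipativity of $\bb_{\tau_{n_k}}(\cdot,\mu)$ granted by Proposition~\ref{prop:monotonicity}. Finally, for \eqref{eq:10} I would start from the Hilbertian differentiation formula \eqref{eq:paramev}: fixing $X\in\dom(\Bb)$ with $X_\sharp\P=\mu$ and setting $X_t:=\Sgp_tX\in\dom(\Bb)$, $\mu_t:=\ss_t(\cdot,\mu)_\sharp\mu$, I would rewrite the $\cH$-limit $h^{-1}(\Sgp_{t+h}X-\Sgp_tX)\to\Bb^\circ X_t$ through \eqref{eq:7-2} and the representation $\Bb^\circ X_t(\omega)=\bb^\circ(\ss_t(X(\omega),\mu),\mu_t)$ coming from \eqref{eq:9}, and change variables via $X_\sharp\P=\mu$ to land exactly on \eqref{eq:10}. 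Uniqueness of $\jj_\tau,\bb_\tau,\ss_t,\bb^\circ$ is ensured throughout by Lemma~\ref{lem:uniqueness}.
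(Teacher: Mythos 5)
Your architecture coincides with the paper's own proof: Proposition \ref{prop:resdiscr} and Proposition \ref{prop:map-invariant} for law invariance, Theorem \ref{thm:continuous} for $\resolvent\tau$, $\Bb_\tau$ and $\Sgp_t$, Lemma \ref{le:last-but-not-least} plus Proposition \ref{prop:monotonicity} for $\Bb^\circ$, and \eqref{eq:paramev} for \eqref{eq:10}; you even supply a proof of the identification $\iota(\overline{\dom(\Bb)})=\overline{\dommmo}$, which the paper only asserts. But there is one genuine gap, and it sits at the hardest point of \eqref{eq:9}. You twice invoke ``law invariance of $\dom(\Bb)$'': once to conclude $Y_n\in\dom(\Bb)$ in your proof of $\overline{\dommmo}\subset\iota(\overline{\dom(\Bb)})$, and once to justify the implication ``$X_\sharp\P=\mu\in\dommmo\Rightarrow X\in\dom(\Bb)$''. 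You never prove this property, and at its second occurrence it is literally the assertion to be proved, so the argument is circular. It also does not follow from the law invariance of the graph of $\Bb$, which you do establish: graph law invariance is a condition on \emph{joint} laws, and given $X\in\dom(\Bb)$, $V\in\Bb X$ and $X'$ with $X'_\sharp\P=X_\sharp\P$, there may exist no $V'$ at all with $(X',V')_\sharp\P=(X,V)_\sharp\P$ (for instance, if $X'$ is essentially injective, every $V'\in\cH$ is $\P$-a.e.\ a function of $X'$, whereas the disintegration of $(X,V)_\sharp\P$ with respect to its first marginal need not consist of Dirac masses). Nor does Lemma \ref{le:last-but-not-least} help here, since it represents $\Bb^\circ$ only at points already known to belong to $\dom(\Bb)$.

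The paper closes this gap with a quantitative Yosida argument that your proposal is missing: if $Y\in\dom(\Bb)$ has the same law as $X$, the representation \eqref{eq:7-3} yields $|\Bb_\tau X|_{\cH}=|\Bb_\tau Y|_{\cH}$ for every $0<\tau<1/\lambda^+$; hence by \eqref{eq:btaucircD} the quantity $(1-\lambda\tau)|\Bb_\tau X|_{\cH}$ stays bounded as $\tau\downarrow0$, and the blow-up criterion \eqref{eq:btaucircN} then forces $X\in\dom(\Bb)$. This argument uses only \eqref{eq:7-3}, which you establish early on, so the repair is to insert it immediately after your treatment of $\Bb_\tau$; both of your appeals to domain law invariance then become legitimate and the rest of your proof goes through. (Alternatively, your inclusion $\overline{\dommmo}\subset\iota(\overline{\dom(\Bb)})$ can be obtained without domain law invariance by replacing Proposition \ref{prop:pratelli} with Corollary \ref{cor:from-gangbo} and Remark \ref{rem:sepa2}: approximate, in the $\cH$-norm and up to $W_2(\mu_n,\mu_{n+1})+2^{-n}$, each element of $\dom(\Bb)$ with law $\mu_n$ by an element of $\dom(\Bb)$ with law $\mu_{n+1}$ of the form $X_{n+1}\circ g$ with $g\in\rmS(\Omega)$, which stays in $\dom(\Bb)$ by measure-preserving invariance; the resulting sequence is Cauchy.)
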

\begin{remark}
  \label{rem:aggiungi}
By Theorem \ref{thm:invTOlawinv}, a maximal $\lambda$-dissipative operator $\Bb \subset \cH \times \cH$, $\lambda\in\R$, is law invariant if and only if it is invariant by measure-preserving isomorphisms.
Thanks to \eqref{eq:9}, also $D(\Bb)$ is \emph{law invariant}, i.e. if $X\in D(\Bb)$ and $Y\in\cH$ is such that $Y_\sharp \P=X_\sharp\P$, then $Y\in D(\Bb)$.
\end{remark}

\begin{proof}
 First, notice that $\Bb$ is closed being maximal $\lambda$-dissipative, hence it is law invariant by Proposition \ref{prop:map-invariant}.
Recall that $\resolvent\tau$ is everywhere defined, $(1-\lambda \tau)^{-1}$-Lipschitz continuous for every $0<\tau<1/\lambda^+$ (see Section \ref{subsec:extension-diss}) and invariant by measure-preserving isomorphisms by Proposition \ref{prop:resdiscr}. Fixed $0<\tau<1/\lambda^+$, we can thus apply Proposition \ref{prop:map-invariant} and Theorem \ref{thm:continuous}(1) and get that $\resolvent\tau$ is law invariant together with property \eqref{eq:lip1} in Theorem \ref{thm:invTOlawinv} and \eqref{eq:7-1}. Similarly, $\Sgp_t$ is defined on the closed set $\overline{\dom(\Bb)}$, it is $e^{\lambda t}$-Lipschitz continuous (cf.~Section \ref{subsec:extension-diss})  and invariant by measure-preserving isomorphisms by Proposition \ref{prop:resdiscr}, so that we can apply Proposition \ref{prop:map-invariant} and Theorem \ref{thm:continuous}(1) and get that it is law invariant together with property \eqref{eq:lip2} of Theorem \ref{thm:invTOlawinv} and \eqref{eq:7-2}. The content of \eqref{eq:8} immediately follows by the semigroup and invariance properties of $\Sgp_t$ (cf.~Section \ref{subsec:extension-diss}), also using that $\overline{\iota(\dom(\Bb))}=\iota(\overline{\dom(\Bb)})$.

We now prove \eqref{eq:9}. If $X \in \dom(\Bb)$, we have that $\Bb_\tau X \to \Bb^\circ X$ as $\tau \downarrow 0$, moreover $\Bb_\tau$ is law invariant, everywhere defined and, for every $0<\tau< 1/\lambda^+$, $\Bb_\tau$ is $\lambda/(1-\lambda\tau)$-dissipative and $\frac{2-\lambda\tau}{\tau(1-\lambda\tau)}$-Lipschitz continuous (cf.~Section \ref{subsec:extension-diss}). Hence, can apply Theorem \ref{thm:continuous}(2) and get that $\Bb^\circ$ is law invariant and that there exists, for every $\mu \in \dommmo$, a map $\bb^\circ[\mu]\equiv \bb^\circ(\cdot,\mu)\in L^2(\X,\mu; \X)$ such that for every $X \in \dom(\Bb)$
\begin{align}
 \Bb^\circ X (\omega) = &\bb^\circ[\mu](X(\omega)) \text{ for $\P$-a.e.~$\omega \in \Omega$},\\ \label{eq:conv2}
\text{there exists $\tau_k \downarrow 0$ s.t. } &\bb_{\tau_k}(X(\omega), \mu) \to \bb^{\circ}[\mu](X(\omega)) \text{ for $\P$-a.e.~$\omega \in \Omega$},
\end{align}
where $\bb_{\tau}$ is the (unique) continuous map that represents $\Bb_{\tau}$ coming from Theorem \ref{thm:continuous}(1), $0<\tau< 1/\lambda^+$. 
To complete the proof of \eqref{eq:9}, we have to check that, if $\mu\in \dommmo$ and $X\in\cH$ is such that $X_\sharp\P=\mu$, then $X\in\dom(\Bb)$. Since $\mu\in \dommmo$, there exists $Y\in\dom(\Bb)$ such that $Y_\sharp\P=X_\sharp\P=\mu$. By \eqref{eq:7-3}, we have
\begin{equation*}
|\Bb_\tau Y|^2_{\cH}=\int |b_\tau(Y(\omega),\mu)|^2\d\P(\omega)=\int |b_\tau(x,\mu)|^2\d\mu(x)=\int |b_\tau(X(\omega),\mu)|^2\d\P(\omega)=|\Bb_\tau X|_{\cH}^2.
\end{equation*}
Hence, since $Y\in\dom(\Bb)$, by \eqref{eq:btaucircD} we have
\[(1-\lambda\tau)|\Bb_\tau X|_{\cH}=(1-\lambda\tau)|\Bb_\tau Y|_{\cH}\uparrow |\Bb^\circ Y|_{\cH}<+\infty,\quad\text{as }\tau\downarrow 0.\]
Recalling \eqref{eq:btaucircN}, we get $X\in\dom(\Bb)$.
Finally \eqref{eq:10} follows by \eqref{eq:paramev} using \eqref{eq:7-2} and \eqref{eq:9}.

It only remains to show that, for every $\mu \in \dommmo$, the map $\bb^\circ[\mu]$ is $\lambda$-dissipative in a full $\mu$-measure set. To this aim, observe that we can apply Theorem \ref{thm:continuous}(1) to $\Bb_\tau$, $0<\tau<1/\lambda^+$, so that for every $\mu \in \prob_2(\X)$ we have
\begin{equation} \label{eq:dissy}
 \la \bb_\tau(y)-\bb_\tau(y'), y-y' \ra \le \frac{\lambda}{1-\lambda\tau} |y-y'|^2 \quad \text{ for every } y,y' \in \supp(\mu).
\end{equation}
Let $\mu \in \dommmo$ and let us consider a representative of $X \in \dom(\Bb)$ such that $X_\sharp \P=\mu$; let $\tau_k$ be a sequence as in \eqref{eq:conv2} and let $\Omega_0$ be a full $\P$-measure set where the convergence in \eqref{eq:conv2} takes place. If we take $y,y' \in X(\Omega_0) \cap \supp(\mu)$ then we can find $\omega, \omega' \in \Omega_0$ such that $X(\omega)=y$ and $X(\omega')=y'$ so that, passing to the limit as $k \to + \infty$ in \eqref{eq:dissy} written for $\tau=\tau_k$ we get that
\[\la \bb^{\circ}[\mu](y)-\bb^\circ[\mu](y'), y-y' \ra \le \lambda |y-y'|^2 \quad \text{ for every } y,y' \in X(\Omega_0) \cap \supp(\mu). \]
It is then enough to observe that $X(\Omega_0) \cap \supp(\mu)$ contains a Borel set $\X_0$ of full $\mu$-measure to conclude: in fact, being $X(\Omega_0)$ a Suslin set, we can find two Borel sets $E_0, E_1$ such that $E_0 \subset X(\Omega_0) \subset E_1$ and $\mu(E_1 \setminus E_0)=0$. Since $\mu(E_1)=X_\sharp \P (E_1) \ge \P(\Omega_0)=1$, we conclude that we can take $\X_0:=E_0 \cap \supp(\mu)$.
\end{proof}

\appendix
\section{An alternative proof of
the extension theorem for
\texorpdfstring{$\G$}{G}-invariant Lipschitz maps}
 We provide an alternative proof to the extension result for invariant Lipschitz maps stated in Theorem \ref{thm:inv-lip-ext}. Here we use a recent and beautiful explicit construction of 
a Lipschitz extension provided by 
\cite{ALM21} (see also \cite{ALM18}) and we show that it preserves the invariance.
We consider a Hilbert space $\H$ with norm $|\cdot|$ and scalar product $\la \cdot, \cdot\ra$ as in Section \ref{subsec:invariant-Lip}.

Recall that if $g:\W\to \R$
is a function defined 
in a Hilbert space $\W$, then
its convex envelope is defined by
\begin{equation}
    \label{eq:conv-env}
    \conv{g}(w):=
    \inf\Big\{\sum_{i=1}^N \alpha_i g(w_i):
    \alpha_i\ge0,\
    \sum_{i=1}^N\alpha_i=1,\ 
    w_i\in \W,\ 
    \sum_{i=1}^N \alpha_i w_i=w,\ N\in \N\Big\}.    
\end{equation}
If $g$ is locally bounded from above, then also $\conv g$ is locally bounded from above and it is therefore locally Lipschitz.
\begin{theorem}[\cite{ALM21}]\label{thm:ALM21}
    Let $f:D\to \H$ be 
    a $L$-Lipschitz map defined in $D\subset \H$.
    Setting for every $x,y\in \H$
    \begin{equation}
        \label{eq:first-step}
        \begin{aligned}
        g(x,y):={}&
        \inf_{x'\in D}
        \Big\{\langle f(x'),y\rangle+
        \frac L2 |(x-x',y)|_{\H\times \H}^2\Big\}+
        \frac L2 |(x,y)|_{\H\times \H}^2,\\
        \tilde g:={}&\conv g,
        \end{aligned}
    \end{equation}
    then $\tilde g$ is a convex function of class $\rmC^{1,1}$ in $\H\times \H$
    and its partial differential with respect to the second variable in $\H$
    \begin{equation}
        \label{eq:second-step}
        F(x):=\nabla_y \tilde g(x,0), \quad x \in \H,
    \end{equation}
    is a $L$-Lipschitz extension of $f$.
\end{theorem}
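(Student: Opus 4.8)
The plan is to exploit the special algebraic structure of $g$. Expanding $\frac L2|x-x'|^2$ and isolating the terms that do not depend on $x'$, I would first rewrite, for $z=(x,y)\in\H\times\H$,
\[
g(z)=L|z|^2+\Lambda(z),\qquad \Lambda(z):=\inf_{x'\in D}\Big\{\la f(x'),y\ra-L\la x,x'\ra+\tfrac L2|x'|^2\Big\}.
\]
As an infimum of affine functions of $z$, the map $\Lambda$ is concave, so $g$ is $2L$-semiconcave: $g-L|z|^2$ is concave, equivalently $g(z+h)+g(z-h)-2g(z)\le 2L|h|^2$ for all $z,h$. Choosing the single competitor $x'=x_0$ for a fixed $x_0\in D$ shows that $g$ is bounded above by a quadratic, hence locally bounded above; and the explicit minorant produced in the last paragraph shows that $g$ admits a global affine minorant. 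Therefore $\tilde g=\conv g$ is finite, convex and locally Lipschitz on $\H\times\H$.

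The core of the argument, and the step I expect to be the main obstacle, is to upgrade this to $\rmC^{1,1}$ regularity with the sharp bound $0\le\nabla^2\tilde g\le 2L\,\mathrm{Id}$. The lower bound is just convexity of $\tilde g$; the upper bound amounts to showing that the convex envelope inherits the $2L$-semiconcavity of $g$, i.e.
\[
\tilde g(z+h)+\tilde g(z-h)-2\tilde g(z)\le 2L|h|^2\qquad\text{for all }z,h.
\]
I would prove this by taking near-optimal convex decompositions realizing $\tilde g(z+h)$ and $\tilde g(z-h)$, recombining their points in matched pairs to build an admissible competitor for $\tilde g(z)$, and applying the second-difference inequality for $g$ termwise; this is precisely where the constant $2L$ (and no larger) is produced. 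Equivalently one may invoke the standard fact that the convex envelope of a locally bounded, semiconcave function admitting an affine minorant is of class $\rmC^{1,1}$ with the same semiconcavity constant. It follows that $\nabla\tilde g$ is $2L$-Lipschitz.

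Granting $\tilde g\in\rmC^{1,1}$, the sharp Lipschitz constant of $F$ — the value $L$ rather than the naive $2L$ — comes out of the Baillon--Haddad co-coercivity of the gradient of a convex $\rmC^{1,1}$ function:
\[
\la\nabla\tilde g(z_1)-\nabla\tilde g(z_2),z_1-z_2\ra\ge\frac1{2L}\,|\nabla\tilde g(z_1)-\nabla\tilde g(z_2)|^2.
\]
I would evaluate this at $z_i=(x_i,0)$ and write $\nabla\tilde g(x_i,0)=(P_i,F(x_i))$. Since $z_1-z_2=(x_1-x_2,0)$ has vanishing second component, the left-hand side equals $\la P_1-P_2,x_1-x_2\ra\le|P_1-P_2|\,|x_1-x_2|$. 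Writing $a=|P_1-P_2|$, $b=|F(x_1)-F(x_2)|$, $d=|x_1-x_2|$, the inequality becomes $2L\,ad\ge a^2+b^2$, hence $b^2\le\max_{a\ge0}(2L\,ad-a^2)=L^2d^2$, that is $|F(x_1)-F(x_2)|\le L|x_1-x_2|$. The gain of the factor $2$ is exactly the effect of restricting to the slice $\{y=0\}$.

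It remains to check that $F$ extends $f$. Fix $x_0\in D$ and put $z_0=(x_0,0)$; the infimum defining $\Lambda(z_0)$ is attained at $x'=x_0$, so $g(z_0)=\tfrac L2|x_0|^2$. I claim that the affine map
\[
\ell(x,y):=\tfrac L2|x_0|^2+L\la x_0,x-x_0\ra+\la f(x_0),y\ra
\]
satisfies $\ell\le g$ on $\H\times\H$ with equality at $z_0$. Indeed, a direct rearrangement gives
\[
g(x,y)-\ell(x,y)=\inf_{x'\in D}\Big\{\la f(x')-f(x_0),y\ra+\tfrac L2|x-x'|^2+\tfrac L2|x-x_0|^2+L|y|^2\Big\},
\]
and, using $|f(x')-f(x_0)|\le L|x'-x_0|\le L|x'-x|+L|x-x_0|$, each term in braces is bounded below by $\tfrac L2(|x-x'|-|y|)^2+\tfrac L2(|x-x_0|-|y|)^2\ge0$. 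This inequality also supplies the global affine minorant of $g$ used above. Since $\ell\le\tilde g\le g$ and $\ell(z_0)=g(z_0)$, the affine function $\ell$ supports the convex function $\tilde g$ at $z_0$; as $\tilde g$ is differentiable there, $\nabla\tilde g(z_0)=\nabla\ell=(Lx_0,f(x_0))$, and therefore $F(x_0)=\nabla_y\tilde g(x_0,0)=f(x_0)$, as required.
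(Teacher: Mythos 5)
The paper itself does not prove this statement: Theorem \ref{thm:ALM21} is quoted from \cite{ALM21}, and the appendix only proves that the construction preserves $\G_\H$-invariance (Theorem \ref{thm:inv-lip-ext-app}). So the relevant comparison is with the argument of \cite{ALM21}, and your proposal is essentially a faithful reconstruction of it: the decomposition $g(z)=L|z|^2+\Lambda(z)$ with $\Lambda$ concave (hence $2L$-semiconcavity of $g$), the passage to the convex envelope and its $\rmC^{1,1}$ regularity with constant $2L$, the Baillon--Haddad co-coercivity inequality restricted to the slice $\{y=0\}$ to recover the sharp constant $L$ instead of $2L$, and the supporting affine function $\ell$ at $(x_0,0)$ forcing $\nabla \tilde g(x_0,0)=(Lx_0,f(x_0))$, i.e.~$F(x_0)=f(x_0)$. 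All the computations you make explicit are correct: the rewriting of $g$, the identity for $g-\ell$ and its nonnegativity via $|f(x')-f(x_0)|\le L|x'-x|+L|x-x_0|$ (which also yields the global affine minorant), and the optimization $b^2\le \max_{a\ge 0}\left(2Lad-a^2\right)=L^2d^2$.

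The one step whose sketched mechanism is wrong is the inheritance of $2L$-semiconcavity by $\tilde g=\conv{g}$. Recombining near-optimal decompositions of $\tilde g(z+h)$ and $\tilde g(z-h)$ \emph{in matched pairs} produces the competitor $\sum_{i,j}\alpha_i\beta_j\, g\bigl(\tfrac{w_i+v_j}{2}\bigr)$ for $\tilde g(z)$, and termwise semiconcavity then only bounds the second difference by $\tfrac L2\sum_{i,j}\alpha_i\beta_j|w_i-v_j|^2$, which is \emph{not} controlled by $|h|^2$: the two clouds $(w_i)_i$, $(v_j)_j$ can be arbitrarily spread out even when $h=0$, so this direction cannot produce the constant $2L$ (or any finite constant). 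The correct elementary argument runs the other way: take a near-optimal decomposition $\sum_i\alpha_i g(w_i)$ of $\tilde g(z)$ with $\sum_i\alpha_i w_i=z$, translate it by $\pm h$ so that $\sum_i \alpha_i(w_i\pm h)=z\pm h$ makes it admissible for $\tilde g(z\pm h)$, and apply $g(w_i+h)+g(w_i-h)-2g(w_i)\le 2L|h|^2$ termwise. Since you also invoke, as an equivalent route, the standard fact that the convex envelope of a locally bounded, $2L$-semiconcave function with an affine minorant is $\rmC^{1,1}$ with the same constant --- which is exactly this lemma --- your proof stands as a whole, but the matched-pairs sketch as written would fail and should be replaced by the translation argument.
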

We state our first result concerning the extension of $\G_\H$-invariant
Lipschitz maps.
\begin{theorem}
    \label{thm:inv-lip-ext-app}
    Under the same assumption of Theorem \ref{thm:ALM21}, let us also suppose that $f$ is $\G_\H$-invariant
    according to \eqref{eq:G-invariance-map}.
    Then $F$ is $\G_\H$-invariant as well.
    In particular, any $\G_\H$-invariant $L$-Lipschitz function $f:D\to\H$ 
    defined in a subset $D$ of $\H$ admits
    a $\G_\H$-invariant $L$-Lipschitz extension $F:\H\to\H.$
\end{theorem}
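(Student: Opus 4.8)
The plan is to show that the extension $F$ from Theorem \ref{thm:ALM21} inherits $\G_\H$-invariance by tracking how each of the three operations in its construction---the infimal expression $g$, its convex envelope $\tilde g$, and the partial gradient $\nabla_y\tilde g(\cdot,0)$---interacts with the diagonal action $\mathsf U:=(U,U):\H\times\H\to\H\times\H$ induced by an arbitrary $U\in\G_\H$. The only structural facts I would use repeatedly are that each $U$, being a linear isometry of a Hilbert space, preserves the inner product (by polarization) and hence satisfies $U^*=U^{-1}$, so that $|(Ux,Uy)|_{\H\times\H}^2=|(x,y)|_{\H\times\H}^2$ for all $x,y\in\H$.

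First I would establish that $g\circ\mathsf U=g$. A preliminary observation is that $U$ maps $D$ bijectively onto $D$: the inclusion $UD\subset D$ is part of \eqref{eq:G-invariance-map}, and applying it to $U^{-1}\in\G_\H$ gives the reverse inclusion. Then in the definition \eqref{eq:first-step} of $g(Ux,Uy)$ the outer term $\frac L2|(Ux,Uy)|^2$ is manifestly invariant, while in the infimum I would substitute $x'=Ux''$, using $f(Ux'')=Uf(x'')$ together with the inner-product and norm preservation to rewrite
\[
\la f(Ux''),Uy\ra+\frac L2|(Ux-Ux'',Uy)|_{\H\times\H}^2
=\la f(x''),y\ra+\frac L2|(x-x'',y)|_{\H\times\H}^2.
\]
Since $x''\mapsto Ux''$ is a bijection of $D$, the infimum is unchanged, giving $g(Ux,Uy)=g(x,y)$.

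Next I would pass to the convex envelope. Because $\mathsf U$ is a linear bijection of $\H\times\H$, it commutes with the formation of convex combinations: writing any representation $\mathsf Uw=\sum_i\alpha_iw_i$ appearing in \eqref{eq:conv-env} and substituting $w_i=\mathsf Uv_i$ turns the constraint into $\sum_i\alpha_iv_i=w$, while by the previous step $g(w_i)=g(v_i)$. Comparing the defining infima yields $\tilde g(\mathsf Uw)=\tilde g(w)$, that is $\tilde g(Ux,Uy)=\tilde g(x,y)$ for all $x,y\in\H$. Finally, since Theorem \ref{thm:ALM21} guarantees $\tilde g\in\rmC^{1,1}$, I may differentiate this identity in $y$: the chain rule gives $U^*\nabla_y\tilde g(Ux,Uy)=\nabla_y\tilde g(x,y)$, and using $U^*=U^{-1}$ and evaluating at $y=0$ (with $U0=0$) produces $F(Ux)=UF(x)$, which is exactly $\G_\H$-invariance. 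The ``in particular'' assertion then follows by combining this with the extension statement of Theorem \ref{thm:ALM21}.

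All steps are essentially routine once the framework is set up; the only point requiring care---the main (minor) obstacle---is the differentiation step, where one must invoke the $\rmC^{1,1}$ regularity of $\tilde g$ to justify applying the chain rule at every point and to handle the adjoint correctly through $U^*=U^{-1}$. The bijectivity of the action of $U$ on $D$ is the other small technical point that must not be overlooked, since the substitution inside the infimum relies on it.
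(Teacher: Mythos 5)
Your proposal is correct and follows essentially the same route as the paper's proof: invariance of $g$ via the substitution $x'=Ux''$ (using that $U$ maps $D$ onto $D$ and preserves the inner product), invariance of $\tilde g=\conv{g}$ because the linear bijection $(U,U)$ preserves convex combinations, and finally transfer of the invariance to $F=\nabla_y\tilde g(\cdot,0)$ using $U^*=U^{-1}$. The only cosmetic difference is in the last step, where you differentiate the identity $\tilde g(Ux,Uy)=\tilde g(x,y)$ by the chain rule, while the paper reads off $F(Ux)=UF(x)$ from the first-order Fr\'echet expansion of $\tilde g$ at $(Ux,0)$; the two arguments are equivalent.
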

\begin{proof} 
    We divide the proof in several claims.

    \noindent\smallskip
    \emph{Claim 1: the map $g$ is $\G_\H$-invariant, i.e.~$g(Ux,Uy)=g(x,y)$
    for every $(x,y)\in \H\times \H$
    and $U\in \G_\H.$}

    Every element $x'\in D$
    can be written as $x'=Ux''$
    with $x''=U^{-1}x'\in D$, so that
    for every $U\in \G_\H$ and $(x,y)\in \H\times \H$
    \begin{align*}
        g(Ux,Uy)&=
        \inf_{x'\in D}
        \Big\{\langle f(x'),Uy\rangle+
        \frac L2 |(Ux-x',Uy)|_{\H\times \H}^2\Big\}+
        \frac L2 |(Ux,Uy)|_{\H\times \H}^2
        \\&=
        \inf_{x''\in D}
        \Big\{\langle f(Ux''),Uy\rangle+
        \frac L2 |(U(x-x''),Uy)|_{\H\times \H}^2\Big\}+
        \frac L2 |(Ux,Uy)|_{\H\times \H}^2
        \\&=
        \inf_{x''\in D}
        \Big\{\langle  f(x''),y\rangle+
        \frac L2 |(x-x'',y)|_{\H\times \H}^2\Big\}+
        \frac L2 |(x,y)|_{\H\times \H}^2
        \\&=g(x,y)
    \end{align*}
    where we used 
    \eqref{eq:G-invariance-map}
    and the isometric character of $U$ 
    to get
    $\langle f(Ux''),Uy\rangle=
    \langle Uf(x''),Uy\rangle=
    \langle f(x''),y\rangle$.

    \noindent\smallskip
    \emph{Claim 2: the map $\tilde g:=\conv g$
    is $\G_\H$-invariant as well.}

    It is sufficient to observe that 
    for every $U\in \G_\H,$ $N\in \N$,
    and $\alpha_i\ge0$ with $\sum_{i=1}^N\alpha_i=1$,
    a collection 
    $\{(x_i,y_i)\}_{i=1}^N
    \in (\H\times \H)^N$
    satisfies $\sum_{i=1}^N \alpha_i 
    (x_i,y_i)=(x,y)$
    if and only if 
    $\sum_{i=1}^N \alpha_i (U x_i,Uy_i)=(Ux,Uy)$.
    Using \eqref{eq:conv-env}
    and the invariance of $g$ we 
    thus obtain $\tilde g(Ux,Uy)=\tilde g(x,y)$ for every $x,y \in \H$.

    \noindent\smallskip
    \emph{Claim 3: the map $F:=\nabla_y \tilde{g}(\cdot,0)$
    is $\G_\H$-invariant.}
    
    Since we know that $\tilde g$ is
    Frech\'et differentiable, 
    we observe that 
    $z= F(x)$
    if and only if 
    \begin{displaymath}
        \tilde g(x,y)=
        \tilde g(x,0)+\langle z,y\rangle
        +o(|y|)\quad
        \text{as }y\to 0.
    \end{displaymath}
    Being $\tilde g$ invariant,
        for every $U\in \G_\H$ and 
        $x\in \H$ the above formula immediately yields
    \begin{align*}    
         \tilde g(Ux,y)&=
         \tilde g(Ux,UU^{-1}y)=
         \tilde g(x,U^{-1}y)
         \\&=
        \tilde g(x,0)+\langle z,U^{-1}y\rangle
        +o(|U^{-1}y|)
        \\&=
        \tilde g(Ux,0)+\langle Uz,y\rangle
        +o(|y|)
        \quad
        \text{as }y\to 0,
    \end{align*}
    so that $F(Ux)=Uz=UF(x)$.
\end{proof}

\printbibliography

\end{document}